\documentclass[12pt]{article}
\usepackage{mathrsfs,amsthm,graphicx,color,verbatim,bbm,amsmath,amsfonts,amssymb,newclude,nicefrac,graphicx,enumerate,hyperref,bm,geometry,mathabx}
\usepackage[T1]{fontenc}
\usepackage{todonotes}
\geometry{a4paper,top=20mm,left=20mm,right=15mm,bottom=25mm,headsep=2mm,footskip=8mm}
\usepackage[capitalise]{cleveref} 

\theoremstyle{plain}
\newtheorem{theorem}{Theorem}[section]
\newtheorem{lemma}[theorem]{Lemma}

\newtheorem{cor}[theorem]{Corollary}

\theoremstyle{remark}

\theoremstyle{definition}

\newcommand{\E}{\mathbb{E}}
\newcommand{\F}{\mathbb{F}}
\renewcommand{\P}{\mathbb{P}}

\newcommand{\R}{\mathbb{R}}
\newcommand{\N}{\mathbb{N}}

\newcommand{\V}{\mathbb{V}}

\newcommand{\X}{\mathbb{X}}

\newcommand{\Borel}{\mathcal{B}}
\newcommand{\cC}{\mathcal{C}}
\newcommand{\cF}{\mathcal{F}}
\newcommand{\cK}{\mathcal{K}}

\newcommand{\cO}{\mathcal{O}}

\newcommand{\cV}{\mathcal{V}}
\newcommand{\cW}{\mathcal{W}}

\newcommand{\Exp}[1]{ \E \! \left[ #1 \right]}

\newcommand{\EXP}[1]{ \E  [ #1 ]}

\newcommand{\norm}[1]{ \left\| #1 \right\| }
\newcommand{\Norm}[1]{ \| #1 \| }
\newcommand{\Normm}[1]{ \big\| #1 \big\| }

\newcommand{\HSnorm}[1]{ \left\vvvert #1 \right\vvvert }
\newcommand{\HSNorm}[1]{
\vvvert #1 \vvvert }

\newcommand{\qandq}{\qquad\text{and}\qquad}






\newcommand{\Exists}{\exists\,}
\newcommand{\Forall}{\forall\,}

\makeatletter
\newcommand{\vast}{\bBigg@{3.5}}
\newcommand{\Vast}{\bBigg@{4}}
\makeatother

\newcounter{AuthorCount}
\stepcounter{AuthorCount}

\begin{document}

\title{On existence and uniqueness \\ 
	properties for solutions of \\ 
	stochastic fixed point equations}

\author{
	Christian Beck$^{\arabic{AuthorCount}}$, 
	\stepcounter{AuthorCount}
	Lukas Gonon$^{\arabic{AuthorCount}\stepcounter{AuthorCount},
		\arabic{AuthorCount}\stepcounter{AuthorCount}}$, 
	Martin Hutzenthaler$^{\arabic{AuthorCount}\stepcounter{AuthorCount}}$, 
	and 
	Arnulf Jentzen$^{\arabic{AuthorCount}\stepcounter{AuthorCount}}$
\bigskip
\setcounter{AuthorCount}{1}
	\\
	\small{$^{\arabic{AuthorCount}
			\stepcounter{AuthorCount}}$ 
		Department of Mathematics, 
		ETH Zurich, 
		Z\"urich,}\\
	\small{Switzerland, 
		e-mail: christian.beck@math.ethz.ch} 
	\\
	\small{$^{\arabic{AuthorCount}
			\stepcounter{AuthorCount}}$ 
		Department of Mathematics, 
		ETH Zurich, 
		Z\"urich,}\\
	\small{Switzerland, 
		e-mail: lukas.gonon@math.ethz.ch}
	\\
	\small{$^{\arabic{AuthorCount}
			\stepcounter{AuthorCount}}$ 
		Faculty of Mathematics and Statistics, University of St.~Gallen,}\\
	\small{St.~Gallen, 
		Switzerland, 
		e-mail:	lukas.gonon@unisg.ch}
	\\
	\small{$^{\arabic{AuthorCount}
			\stepcounter{AuthorCount}}$ 
		Faculty of Mathematics, 
		University of Duisburg-Essen,} \\
	\small{Essen, 
		Germany, 
		e-mail: martin.hutzenthaler@uni-due.de}
	\\
	\small{$^{\arabic{AuthorCount}
			\stepcounter{AuthorCount}}$ 
		Department of Mathematics, 
		ETH Zurich, 
		Z\"urich,}\\
	\small{Switzerland, 
		e-mail: arnulf.jentzen@sam.math.ethz.ch} 
}

\maketitle

\begin{abstract}
The Feynman--Kac formula implies that every suitable classical solution of a semilinear Kolmogorov partial differential equation (PDE) is also a solution of a certain stochastic fixed point equation (SFPE). In this article we study such and related SFPEs. In particular, the main result of this work proves existence of unique solutions of certain SFPEs in a general setting. As an application of this main result we establish the existence of unique solutions of SFPEs associated with semilinear Kolmogorov PDEs with Lipschitz continuous nonlinearities even in the case where the associated semilinear Kolmogorov PDE does not possess a classical solution. 
\end{abstract}

\pagebreak

\tableofcontents

\section{Introduction}
\label{sec:intro}

The Feynman--Kac formula implies that every suitable classical solution of a semilinear Kolmogorov partial differential equation (PDE) is also a solution of a certain stochastic fixed point equation (SFPE). In this article we study such and related SFPEs. The main result of this article, \cref{abstract_existence} in \cref{subsec:abstract_existence_uniqueness} below, shows the existence of unique solutions of certain SFPEs in an abstract setting. 
As an application of \cref{abstract_existence} we establish in \cref{existence_sde_setting}  the existence of unique solutions of SFPEs associated with semilinear Kolmogorov PDEs with Lipschitz continuous nonlinearities even in the case where the associated semilinear Kolmogorov PDE does not possess a classical solution (see, for example, Hairer et al.~\cite{HaHuJe2017_LossOfRegularityKolmogorov}). To illustrate \cref{existence_sde_setting} in more detail we provide in the following result, \cref{introduction:theorem} below, a special case of \cref{existence_sde_setting}.

\begin{theorem}
\label{introduction:theorem}
 Let 
	$d,m\in\N$, 
	$L,T\in (0,\infty)$, 
 let 
 	$\langle\cdot,\cdot\rangle\colon\R^d\times\R^d\to\R$ 
 	be the standard scalar product on $\R^d$, 
 let 
	$\norm{\cdot}\colon \R^d \to [0,\infty)$ 
	be a norm on $\R^d$, 
 let 
	$\HSnorm{\cdot}\colon \R^{d\times m}\to [0,\infty)$ be a norm on $\R^{d\times m}$, 
 let 
	$\mu\colon\R^d\to\R^d$ and 
	$\sigma\colon \R^d\to\R^{d\times m}$
	be locally Lipschitz continuous, 
let 
	$f\in C([0,T]\times\R^d\times\R,\R)$, 
	$g\in C(\R^d,\R)$ 
be at most polynomially growing,  	
assume for all 
	$t\in [0,T]$, 
	$x\in \R^d$, 
	$v,w\in \R$ 
that 
	$\max\{\langle x,\mu(x)\rangle,\HSNorm{\sigma(x)}^2\} 
	\leq L ( 1 + \norm{x}^2 ) $ 
and
	$|f(t,x,v)-f(t,x,w)| \leq L|v-w|$, 
 let 
	$(\Omega,\cF,\P,(\F_t)_{t\in [0,T]})$ be a filtered probability space which satisfies the usual conditions, 
 let 
	$W\colon [0,T]\times\Omega\to\R^m$ be a standard $(\F_t)_{t\in [0,T]}$-Brownian motion, 
and for every 
	$t\in [0,T]$, 
	$x\in \R^d$ 
let 
	$X^{t,x} = (X^{t,x}_s)_{s\in [t,T]}\colon [t,T]\times\Omega \to \R^d$ 
be an $(\F_s)_{s\in [t,T]}$-adapted stochastic process with continuous sample paths which satisfies that for all 
	$s\in [t,T]$ 
it holds $\P$-a.s.~that 
	\begin{equation} 
	X^{t,x}_s = x + \int_t^s \mu(X^{t,x}_r)\,dr + \int_t^s \sigma(X^{t,x}_r)\,dW_r. 
	\end{equation} 
Then there exists a unique at most polynomially growing $u\in C([0,T]\times\R^d,\R)$ such that for all 
		$t\in [0,T]$, 
		$x\in \R^d$ 
it holds that 
	\begin{equation} 
	\label{introduction_theorem:stochastic_fixed_point_equation}
	u(t,x) 
	= 
	\Exp{g(X^{t,x}_{T}) + \int_t^T f\big(s,X^{t,x}_{s},u(s,X^{t,x}_{s})\big)\,ds}
	\!.
	\end{equation}  
\end{theorem}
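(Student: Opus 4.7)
The plan is to deduce \cref{introduction:theorem} as a specialisation of \cref{existence_sde_setting} (which in turn rests on the abstract fixed-point result \cref{abstract_existence}); the task thus reduces to verifying that the concrete SDE setting at hand satisfies the hypotheses of the abstract framework. The natural state space for the Picard iteration is the weighted Banach space
\begin{equation*}
B_q^\beta = \Big\{ u \in C([0,T]\times\R^d,\R) : \|u\|_{q,\beta} := \sup\nolimits_{(t,x)\in[0,T]\times\R^d} \tfrac{e^{-\beta(T-t)}\, |u(t,x)|}{1+\Norm{x}^q} < \infty \Big\},
\end{equation*}
and the operator whose fixed points we seek is
\begin{equation*}
(\Phi u)(t,x) = \Exp{g(X^{t,x}_T) + \textint_t^T f(s,X^{t,x}_s,u(s,X^{t,x}_s))\,ds}.
\end{equation*}

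First I would establish the standard polynomial moment bound: for every $p\in[2,\infty)$ there exists a constant $C_p\in(0,\infty)$ with $\sup_{s\in[t,T]}\EXP{\Norm{X^{t,x}_s}^p} \le C_p(1+\Norm{x}^p)$ for all $t\in[0,T]$ and $x\in\R^d$. This follows by applying It\^o's formula to $\Norm{X^{t,x}_s}^{2p}$, exploiting the Lyapunov-type assumption $\max\{\langle x,\mu(x)\rangle,\HSNorm{\sigma(x)}^2\}\le L(1+\Norm{x}^2)$, localising along a sequence of stopping times to accommodate the merely locally Lipschitz coefficients, and concluding by Gronwall's inequality. The same localisation-and-Gronwall scheme, applied to the difference of two solutions with converging initial data, delivers continuity of $(t,x)\mapsto X^{t,x}_s$ in $L^p(\Omega)$; combined with the continuity and polynomial growth of $f$ and $g$, this implies via dominated convergence that $\Phi u \in C([0,T]\times\R^d,\R)$ for every polynomially growing continuous $u$. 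Choosing $q\in\N$ large enough to dominate the polynomial growth of both $g$ and $x\mapsto\sup_{t\in[0,T]}|f(t,x,0)|$, one then verifies $\Phi(B_q^\beta)\subseteq B_q^\beta$ for every $\beta\in(0,\infty)$.

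For the contraction step, the Lipschitz property of $f$ in its last argument yields
\begin{equation*}
|\Phi u(t,x) - \Phi v(t,x)| \le L \textint_t^T \Exp{|u(s,X^{t,x}_s) - v(s,X^{t,x}_s)|}\,ds,
\end{equation*}
and bounding the integrand by $\|u-v\|_{q,\beta}\, e^{\beta(T-s)}(1+\EXP{\Norm{X^{t,x}_s}^q})$, applying the moment bound, and computing the $s$-integral produces a factor of the form $\tfrac{C}{\beta}e^{\beta(T-t)}(1+\Norm{x}^q)$. Choosing $\beta$ sufficiently large makes $\Phi$ a strict contraction on $B_q^\beta$, so Banach's fixed point theorem delivers a unique fixed point in $B_q^\beta$; uniqueness within the whole class of polynomially growing continuous functions then follows by enlarging $q$ to dominate the growth of any two hypothetical fixed points and repeating the contraction argument. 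The main obstacle I expect is the rigorous verification of joint continuity of $(t,x)\mapsto \Phi u(t,x)$ under the weakened local-Lipschitz-with-Lyapunov-growth hypotheses on $\mu$ and $\sigma$, which forces one to couple the moment estimates with careful uniform integrability and localisation arguments. Once this regularity step together with the polynomial moment bounds is in hand, the abstract theorem \cref{abstract_existence}, channelled through \cref{existence_sde_setting}, takes over and delivers existence and uniqueness automatically.
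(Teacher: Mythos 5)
Your overall architecture coincides with the paper's: reduce to the abstract fixed-point result via polynomial Lyapunov functions, run Banach's fixed-point theorem in an exponentially weighted (Bielecki-type) norm, and obtain uniqueness in the full polynomial class by enlarging the weight exponent. That is precisely the paper's chain \cref{existence_of_fixpoint_polynomial_growth} $\Leftarrow$ \cref{existence_of_fixpoint_x_dependence_full_space_lyapunov} $\Leftarrow$ \cref{existence_sde_setting} $\Leftarrow$ \cref{abstract_existence}, with \cref{polynomials_lyapunov} supplying $V_q(x)=(1+\Norm{x}^2)^{q/2}$ and \cref{moment_estimate_special_lyapunov} converting it into the parabolic supersolution $e^{-\rho t}V_q(x)$; your norm $\Norm{u}_{q,\beta}$ is, after writing $e^{-\beta(T-t)}=e^{-\beta T}e^{\beta t}$, the norm $\Norm{\cdot}_\lambda$ from the proof of \cref{abstract_existence} up to a constant, and your enlarge-$q$ uniqueness argument is what \cref{existence_of_fixpoint_polynomial_growth} does.

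Two points need care. First, the abstract space $\cV$ in \cref{abstract_existence} consists of continuous $u$ with $u/V\to 0$ at infinity, not merely $u/V$ bounded as your $B_q^\beta$ is defined. The paper arranges this decay by taking the weight exponent strictly larger than the actual polynomial growth (it applies \cref{existence_of_fixpoint_x_dependence_full_space_lyapunov} with $V_{2p}$ when the data grow like $\Norm{x}^p$); if you insist on the bounded-weight space you are no longer literally invoking \cref{abstract_existence} but re-proving a variant, and then the well-definedness step carries the full burden. Second, and more substantively, the sentence claiming that \emph{localisation-and-Gronwall delivers continuity of $(t,x)\mapsto X^{t,x}_s$ in $L^p(\Omega)$} compresses the hardest part of the argument to the point of being a gap. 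With merely locally Lipschitz coefficients, Gronwall controls the difference of two solutions only up to a first exit time from a compact set; to remove the localisation you must control the exit probability via the Lyapunov/moment bound, which yields convergence in probability first, and passing to $L^p$ then requires uniform integrability from a strictly higher moment. This removal is exactly what the paper invests \cref{sde_compactly_supported}, \cref{coinciding_until_stopping_times}, \cref{stability_for_sdes}, and \cref{stochastic_continuity_lemma} in, and \cref{stochastic_continuity_lemma} establishes only \emph{stochastic} continuity; continuity of $(t,x)\mapsto\Phi u(t,x)$ is then obtained not by dominated convergence against an $L^p$-continuous flow, but via compactly supported approximations and Vitali's theorem (\cref{approximation_result_V1}, \cref{approximation_result}, \cref{continuity_compactly_supported_case}, \cref{continuity_general_case}, \cref{stochastic_fixed_point_semilinear_continuity}). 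Your route can be repaired along these lines, but as written it does not yet constitute a proof of the regularity step.
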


SFPEs of the form as in \eqref{introduction_theorem:stochastic_fixed_point_equation} 
have a strong connection with semilinear Kolmogorov PDEs and arise, for example, in 
models from the environmental sciences as well as in pricing problems from financial engineering (cf., for example, Burgard \& Kjaer~\cite{burgard2011partial}, Cr\'epey et al.~\cite{crepey2013counterparty}, Duffie et al.~\cite{Duffie1996RecursiveValuation}, and  Henry-Labord\`ere~\cite{henry2012counterparty}). SFPEs such as \eqref{introduction_theorem:stochastic_fixed_point_equation} are also important for full-history recursive multilevel Picard approximation (MLP) methods, which were recently introduced in \cite{hutzenthaler2016multilevel,hutzenthaler2018overcoming}; see also \cite{AllenCahnApproximation2019,EHutzenthalerJentzenKruse2019MLP,hutzenthaler2019overcoming,hutzenthaler2017multi}. 
In \cite{hutzenthaler2018overcoming,hutzenthaler2019overcoming} it has been shown that functions which satisfy SFPEs related to semilinear Kolmogorov PDEs can be approximated by MLP schemes  without the curse of dimensionality. \cref{introduction:theorem} above establishes existence of unique solutions of SFPEs related to semilinear Kolmogorov PDEs with Lipschitz continuous nonlinearities within the class of at most polynomially growing continuous functions.
\cref{introduction:theorem} is an immediate consequence of \cref{existence_of_fixpoint_polynomial_growth} in \cref{subsec:SFEs_and_SDEs} below. \cref{existence_of_fixpoint_polynomial_growth}, in turn, follows from \cref{existence_of_fixpoint_x_dependence_full_space_lyapunov} which itself is a special case of  \cref{existence_sde_setting}.
\cref{existence_sde_setting} is an application of \cref{abstract_existence}, the main result of this article. \cref{existence_sde_setting} shows the existence of unique solutions of SFPEs associated with suitable semilinear Kolmogorov PDEs with Lipschitz continuous nonlinearities within a certain class of continuous functions. 
Related existence and uniqueness results can be found, e.g., in Pazy~\cite[Theorem 6.1.2]{Pazy1983Semigroups}, 
Segal~\cite[Theorem 1]{Segal1963NonlinearSemigroups}, Weissler~\cite[Theorem 1]{Weissler1979SemilinearEvolutionEquations}, 
and Hutzenthaler et al.~\cite[Corollary 3.11]{hutzenthaler2018overcoming}. 

The remainder of this article is organized as follows. In \cref{sec:abstract_study} we investigate SFPEs in an abstract setting. In \cref{abstract_existence} in \cref{subsec:abstract_existence_uniqueness}, the main result of this article, we obtain under suitable assumptions an abstract existence and uniqueness result for solutions of SFPEs. Its proof is based on Banach's fixed point theorem. In \crefrange{subsec:integrability}{subsec:continuity} we establish the well-definedness of the mapping to which Banach's fixed point theorem is applied in the proof of \cref{abstract_existence}. In \cref{subsec:lipschitz} we prove a Lipschitz estimate which establishes the contractivity property of the mapping to which Banach's fixed point theorem is applied in the proof of \cref{abstract_existence}. In \cref{sec:applications} we apply the abstract theory from \cref{abstract_existence} in  \cref{sec:abstract_study} in the context of certain stochastic differential equations (SDEs) to obtain \cref{existence_sde_setting}, the main result of \cref{sec:applications}. In \crefrange{subsec:moment_estimates}{subsec:continuous_dependence} we present several auxiliary results on certain SDEs in order to demonstrate that the hypotheses of \cref{abstract_existence} are satisfied in the setting of \cref{existence_sde_setting}. The article is concluded by means of two simple corollaries of \cref{existence_sde_setting} (see \cref{existence_of_fixpoint_x_dependence_full_space_lyapunov} and \cref{existence_of_fixpoint_polynomial_growth} in \cref{subsec:SFEs_and_SDEs} below).

\section{Abstract stochastic fixed point equations (SFPEs)}
\label{sec:abstract_study}

In this section we study SFPEs from an abstract point of view. This section's main result is \cref{abstract_existence} below. It is an application of Banach's fixed point theorem to a suitable function. 
\cref{stochastic_fixed_point_semilinear_continuity} in \cref{subsec:continuity} establishes the well-definedness of this function. \cref{stochastic_fixed_point_semilinear_continuity} is a direct consequence of \cref{continuity_general_case} which we establish through an approximation argument building upon \cref{convergence_result,continuity_compactly_supported_case}.
The contractivity property of the function to which we apply Banach's fixed point theorem in the proof \cref{abstract_existence} is established in \cref{stochastic_fixed_point_measurable_setting_lipschitz_property} in \cref{subsec:lipschitz} below. 

\subsection{Integrability properties for certain stochastic processes}
\label{subsec:integrability}

\begin{lemma}
	\label{well_definedness_of_the_integrals}
Let 
	$d\in\N$, 
	$T\in (0,\infty)$, 
let 
	$\cO \subseteq \R^d$ 
be a non-empty open set, 
let 
	$(\Omega,\cF,\P)$ 
be a probability space, 
for every 
	$t\in [0,T]$, 
	$x\in \cO$
let 
	$X^{t,x}=(X^{t,x}_s)_{s\in [t,T]}\colon [t,T]\times\Omega \to \cO$ 
	be 
	$(\Borel([t,T])\otimes \cF)$/$\Borel(\cO)$-measurable, 
let 
	$g\colon \cO \to \R$ 
	be $\Borel(\cO)$/$\Borel(\R)$-measurable, 
let 
	$h\colon [0,T]\times \cO \to \R$ 
	be $\Borel([0,T]\times\cO)$/$\Borel(\R)$-measurable, 
let 
	$V\colon [0,T]\times\cO \to (0,\infty)$ 
	be  $\Borel([0,T]\times\cO)$/$\Borel((0,\infty))$-measurable, 
and assume for all 
	$t\in [0,T]$, 
	$s\in [t,T]$,
	$x\in \cO$ 
that 
	$\Exp{V(s,X^{t,x}_s)} \leq V(t,x)$
and 
	$
	\sup_{t\in [0,T]}
	\sup_{x\in \cO} 
	[ 
	\frac{|g(x)|}{V(T,x)} 
	+ 
	\frac{|h(t,x)|}{V(t,x)} 
	] 
	< 
	\infty
	$. 
Then it holds for all 
	$t\in [0,T]$,
	$x\in\cO$
that 
	\begin{equation} 
	\label{well_definedness_of_the_integrals:claim}
	\Exp{|g(X^{t,x}_T)| 
		+
		\int_t^T 
		|h(s,X^{t,x}_s)|\,ds} 
	< \infty.  
	\end{equation}
\end{lemma}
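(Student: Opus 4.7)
The plan is a straightforward bound-and-apply-Tonelli argument. The key observation is that the finiteness assumption on the supremum furnishes a constant $C \in [0,\infty)$ such that for all $t \in [0,T]$ and $x \in \cO$ one has $|g(x)| \leq C \, V(T,x)$ and $|h(t,x)| \leq C \, V(t,x)$. This reduces the problem of bounding expectations of $|g \circ X^{t,x}_T|$ and $|h(s, X^{t,x}_s)|$ to bounding expectations of $V$ evaluated along the process, which is handled by the Lyapunov-type hypothesis $\Exp{V(s, X^{t,x}_s)} \leq V(t,x)$.

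First I would fix $t \in [0,T]$ and $x \in \cO$ and record the pointwise bounds
\begin{equation*}
|g(X^{t,x}_T(\omega))| \leq C \, V(T, X^{t,x}_T(\omega))
\qandq
|h(s, X^{t,x}_s(\omega))| \leq C \, V(s, X^{t,x}_s(\omega))
\end{equation*}
for all $s \in [t,T]$ and $\omega \in \Omega$. Taking expectations and applying the Lyapunov hypothesis directly yields $\Exp{|g(X^{t,x}_T)|} \leq C \, V(t,x) < \infty$.

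Second I would handle the time integral. The joint measurability of $(s,\omega) \mapsto X^{t,x}_s(\omega)$ together with the Borel measurability of $h$ and $V$ ensures that $(s,\omega) \mapsto |h(s, X^{t,x}_s(\omega))|$ and $(s,\omega) \mapsto V(s, X^{t,x}_s(\omega))$ are non-negative and jointly measurable on $[t,T] \times \Omega$. Tonelli's theorem therefore applies, giving
\begin{equation*}
\Exp{\int_t^T |h(s, X^{t,x}_s)|\,ds}
= \int_t^T \Exp{|h(s, X^{t,x}_s)|}\,ds
\leq C \int_t^T \Exp{V(s, X^{t,x}_s)}\,ds
\leq C \, T \, V(t,x)
< \infty.
\end{equation*}
Combining the two estimates via linearity of expectation and the inequality $V(t,x) < \infty$ (which follows from $V$ taking values in $(0,\infty)$) yields the claim.

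There is no real obstacle here: the argument is essentially bookkeeping. The only point requiring mild care is the measurability justification for applying Tonelli, which follows immediately from composing the $(\Borel([t,T]) \otimes \cF)$/$\Borel(\cO)$-measurable map $X^{t,x}$ with the Borel measurable maps $h$ and $V$.
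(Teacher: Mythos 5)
Your proof is correct and follows essentially the same route as the paper's: extract a finite growth constant from the supremum hypothesis, apply the Lyapunov-type bound $\Exp{V(s,X^{t,x}_s)} \leq V(t,x)$, and exchange expectation with the time integral via Fubini--Tonelli. The only cosmetic difference is that you invoke Tonelli by name (appropriate, since the integrands are nonnegative) whereas the paper cites Fubini; the substance is identical.
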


\begin{proof}[Proof of \cref{well_definedness_of_the_integrals}]
Throughout this proof let 
	$c\in [0,\infty)$ 
satisfy for all 
	$t\in [0,T]$, 
	$x\in\cO$ 
that 
	\begin{equation}
	\label{well_definedness_of_the_integrals:growth_condition_quantified}
	|g(x)| \leq c V(T,x) 
	\qandq
	|h(t,x)| \leq c V(t,x). 
	\end{equation}
Observe that the hypothesis that $g\colon\cO\to\R$ is  $\Borel(\cO)$/$\Borel(\R)$-measurable, the hypothesis that $h\colon [0,T]\times\cO \to \R$ is $\Borel([0,T]\times\cO)$/$\Borel(\R)$-measurable, and the hypothesis that for every 
	$t\in [0,T]$,
	$x\in\cO$ 
it holds that $X^{t,x}\colon [t,T]\times\Omega\to\cO$ is $(\Borel([t,T])\otimes\cF)$/$\Borel(\cO)$-measurable ensure that for every 
	$t\in [0,T]$,
	$x\in\cO$
it holds that $\Omega \ni \omega \mapsto g(X^{t,x}_T(\omega))\in \R$ is 
$\cF$/$\Borel(\R)$-measurable and $[t,T]\times\Omega \ni (s,\omega) \mapsto h(s,X^{t,x}_s(\omega))\in\R$ is  $(\Borel([t,T])\otimes\cF)$/$\Borel(\R)$-measurable. The hypothesis that for all 
	$t\in [0,T]$, 
	$s\in [t,T]$, 
	$x\in \cO$ 
it holds that $\EXP{V(s,X^{t,x}_s)}\leq V(t,x)$, Fubini's theorem, and \eqref {well_definedness_of_the_integrals:growth_condition_quantified} hence ensure that for all 
	$t\in [0,T]$,
	$x\in\cO$ 
it holds that 
	\begin{equation}
	\begin{split}
	& 
	\Exp{
		|g(X^{t,x}_T)| 
		+ 
		\int_t^T |h(s,X^{t,x}_s)|\,ds
	} 
	= 
	\Exp{|g(X^{t,x}_T)|} + 
	\int_t^T \Exp{|h(s,X^{t,x}_s)|}\,ds
	\\
	& \leq 
	\Exp{cV(T,X^{t,x}_T)} 
	+ 
	\int_t^T 
	\Exp{cV(s,X^{t,x}_s)} \,ds 
	\leq 
	c V(t,x)
	+ 
	\int_t^T cV(t,x) \,ds 
	\\[1ex]
	& \leq 
	c(1+T) V(t,x) < \infty. 
	\end{split}
	\end{equation}
This demonstrates \eqref{well_definedness_of_the_integrals:claim}. The proof of \cref{well_definedness_of_the_integrals} is thus completed.
\end{proof} 

\subsection{Continuity properties for solutions of SFPEs}

In this section we establish in \cref{convergence_result}, \cref{approximation_result_V1}, and \cref{approximation_result} several elementary convergence and approximation results. The convergence result in \cref{convergence_result} and the  approximation result in \cref{approximation_result} pave the way for \cref{subsec:continuity}. They will together with \cref{continuity_compactly_supported_case} be employed in the proof of \cref{continuity_general_case} in \cref{subsec:continuity}. \cref{continuity_general_case}, in turn, has \cref{stochastic_fixed_point_semilinear_continuity} as a rather direct consequence, which itself is one of the cornerstones of the proof of \cref{abstract_existence}.  

\begin{lemma}
	\label{convergence_result}
Let 
	$d\in\N$,
	$T\in (0,\infty)$, 
let 
	$\norm{\cdot}\colon\R^d\to [0,\infty)$ be a norm on $\R^d$, 
let 
	$\cO\subseteq \R^d$ be a non-empty open set, 
for every 
	$r\in (0,\infty)$ let 
	$O_r \subseteq \cO$ 
	satisfy 
	$O_r
	= 
	\{x\in\cO\colon \norm{x} \leq r
	~\text{and}~   \{
	y\in\R^d\colon \norm{y-x} < \nicefrac{1}{r}
	\}
	\subseteq \cO\}
	$, 
let 
	$(\Omega,\cF,\P)$ be a probability space, 
for every 
	$t\in [0,T]$, 
	$x\in \cO$
let 
	$
	X^{t,x}=
	(X^{t,x}_{s})_{s\in [t,T]}
	\colon [t,T] \times \Omega \to \cO
	$ 
	be 	$(\Borel([t,T])\otimes\cF)$/$\Borel(\cO)$-measurable, 
let 
	$V\in C([0,T]\times\cO,(0,\infty))$ satisfy 
for all 
	$t\in [0,T]$, 
	$s\in [t,T]$, 
	$x\in \cO$
that 
	$
	\EXP{V(s,X^{t,x}_{s})} \leq V(t,x)
	$, 
let 
	$g_{n} \in C(\cO,\R)$, $n\in\N_0$, 
	and 
	$h_{n} \in C([0,T]\times\cO,\R)$, $n\in\N_0$, 
satisfy for all 
	$n\in\N$ 
that 
	$
	\inf_{r\in (0,\infty)} 
	[
	\sup_{t\in [0,T]}
	\sup_{x\in \cO\setminus O_r} 
	(
	\frac{|g_{n}(x)|}{V(T,x)} + \frac{|h_{n}(t,x)|}{V(t,x)}
	)
	]
	= 
	0
	$, 
and assume that 
	\begin{equation} 
	\label{convergence_result:convergence_ass}
	\limsup_{n\to\infty} 
	\left[ 
	\sup_{t\in [0,T]} 
	\sup_{x\in\cO} 
	\left( 
	\frac{|g_{n}(x)-g_{0}(x)|}{V(T,x)} 
	+ 
	\frac{|h_{n}(t,x) - h_{0}(t,x)|}{V(t,x)} 
	\right) 
	\right] 
	= 0.  
	\end{equation}     
Then 
	\begin{enumerate}[(i)]
	\item 
	\label{convergence_result:item0a}
	it holds for every 
		$n\in\N_0$
	that 
	\begin{equation} 
		\sup_{t\in [0,T]} 
		\sup_{x\in \cO} 
		\left[ 
 		 \frac{|g_n(x)|}{V(T,x)} 
		 + 
		 \frac{|h_n(t,x)|}{V(t,x)}
 		\right] 
		< \infty,
	\end{equation} 

	\item \label{convergence_result:item0b}
	it holds for every 
		$n\in\N_0$
	that there exists a unique  
		$u_{n}\colon [0,T]\times\cO\to\R$
	which satisfies for all 
		$t\in [0,T]$, 
		$x\in\cO$ 
that 
	\begin{equation} 
	\label{convergence_result:definition_of_un}
	u_{n}(t,x) 
	= 
	\Exp{g_{n}(X^{t,x}_{T}) + \int_t^T h_{n}(s,X^{t,x}_{s})\,ds}\!, 
	\end{equation}

	\item 
	\label{convergence_result:item1}
	it holds that 
		\begin{equation} 
		\limsup_{n\to\infty} 
		\left[ 
		\sup_{t\in [0,T]}
		\sup_{x\in \cO} 
		\left(        
		\frac{|u_{n}(t,x) - u_{0}(t,x)|}{V(t,x)} 
		\right) 
		\right] 
		= 0,  
		\end{equation}
	and 
	\item 
	\label{convergence_result:item2}
	it holds for every 
		compact set $\cK\subseteq \cO$ 
	that 
		\begin{equation} 
		\limsup_{n\to\infty}
		\left[ 
		\sup_{t\in [0,T]}
		\sup_{x\in \cK} 
		|u_{n}(t,x) - u_{0}(t,x)|	
		\right] 
		= 0. 
		\end{equation}
	\end{enumerate}
\end{lemma}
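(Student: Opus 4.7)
The plan is to verify items (i)--(iv) sequentially, exploiting the invariance bound $\Exp{V(s,X^{t,x}_{s})} \leq V(t,x)$ throughout and invoking \cref{well_definedness_of_the_integrals} for item (ii). The first structural observation I would record is that each set $O_r$ is compact in $\R^d$: it is bounded by $r$, and it is closed since the conditions $\norm{x}\leq r$ and $\{y\in\R^d\colon\norm{y-x}<\nicefrac{1}{r}\}\subseteq\cO$ are both closed in the variable $x$ (the latter because any $y$ with $\norm{y-x}<\nicefrac{1}{r}$ eventually lies in the corresponding open ball around any approximating sequence point).

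For item (i), fix $n\in\N$. The hypothesis on $(g_n,h_n)$ supplies some $r\in(0,\infty)$ such that the $V$-weighted ratios are bounded (say by $1$) on $[0,T]\times(\cO\setminus O_r)$. On the compact set $[0,T]\times O_r$, the continuity of $g_n$, $h_n$, and $V$ together with positivity of $V$ yields a finite bound, whence the claim follows for $n\in\N$ by combining the two estimates. For $n=0$ I would use \eqref{convergence_result:convergence_ass} to fix any $n_0\in\N$ for which the weighted differences $|g_{n_0}-g_0|/V(T,\cdot)+|h_{n_0}-h_0|/V$ are bounded by $1$, and then apply the triangle inequality with the $n_0$ case already established to bound $|g_0|/V(T,\cdot)+|h_0|/V$.

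Item (ii) is then immediate from item (i) and \cref{well_definedness_of_the_integrals}, which guarantees that the expectation in \eqref{convergence_result:definition_of_un} is finite and hence uniquely defines $u_n$. For item (iii), I would introduce
\begin{equation}
\varepsilon_n
:=
\sup_{t\in[0,T]}\sup_{x\in\cO}\left(\frac{|g_n(x)-g_0(x)|}{V(T,x)} + \frac{|h_n(t,x)-h_0(t,x)|}{V(t,x)}\right)\!,
\end{equation}
which tends to $0$ by \eqref{convergence_result:convergence_ass}. Combining \eqref{convergence_result:definition_of_un}, the triangle inequality, Fubini, and the invariance bound then gives
\begin{equation}
|u_n(t,x)-u_0(t,x)|
\leq
\varepsilon_n\Exp{V(T,X^{t,x}_{T})} + \varepsilon_n\!\int_t^T\! \Exp{V(s,X^{t,x}_{s})}\,ds
\leq
\varepsilon_n(1+T)V(t,x),
\end{equation}
and dividing by $V(t,x)$ before letting $n\to\infty$ yields (iii). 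Finally, item (iv) follows from (iii) because $V$ is continuous and hence bounded on the compact set $[0,T]\times\cK$.

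I expect essentially no genuine obstacle in this lemma: everything is routine bookkeeping once compactness of $O_r$ is recorded and \cref{well_definedness_of_the_integrals} is applied first to $(g_n,h_n)$ to legitimize item (ii) and subsequently to the differences $(g_n-g_0,h_n-h_0)$ (which are valid inputs since their $V$-weighted suprema are finite for all sufficiently large $n$, by \eqref{convergence_result:convergence_ass}).
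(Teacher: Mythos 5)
Your proof is correct and takes essentially the same route as the paper: you record compactness of $O_r$, split the weighted bound over $O_r$ and its complement for item (i), invoke \cref{well_definedness_of_the_integrals} for item (ii), estimate $u_n - u_0$ via the $V$-invariance bound $\EXP{V(s,X^{t,x}_s)}\leq V(t,x)$ for item (iii), and use boundedness of $V$ on the compact set $[0,T]\times\cK$ for item (iv). The only cosmetic difference is that the paper separates the $g$- and $h$-contributions into two displays before applying the triangle inequality, whereas you carry a single combined $\varepsilon_n$, but the content is identical.
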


\begin{proof}[Proof of \cref{convergence_result}]
First, observe that for every 
	$r\in (0,\infty)$ 
it holds that 
	$O_r$
is a compact set. This and the fact that for every 
	$n\in\N_0$ 
it holds that 
	$\cO \ni x \mapsto \frac{g_n(x)}{V(T,x)} \in \R$ 
and 
	$[0,T]\times\cO \ni (t,x) \mapsto \frac{h_n(t,x)}{V(t,x)} \in \R$ 
are continuous imply that for all
	$n\in\N_0$, 
	$r\in (0,\infty)$ 
it holds that 
	\begin{equation} 
		\sup \left(\left\{ \frac{|g_n(x)|}{V(T,x)} + 
		\frac{|h_n(t,x)|}{V(t,x)}  
		\colon t\in [0,T], x\in O_r \right\} \cup\{0\}\right) < \infty. 
	\end{equation}
The hypothesis that for every 
	$n\in\N$
it holds that 
	$\inf_{r\in (0,\infty)} [
	\sup_{t\in [0,T]}
	\sup_{x\in \cO\setminus O_r} 
	(
	\frac{|g_{n}(x)|}{V(T,x)} + \frac{|h_{n}(t,x)|}{V(t,x)}
	)
	]
	= 
	0
	$
hence ensures that for every 
	$n\in\N$ 
it holds that 
	\begin{equation} 
	 \sup_{t\in [0,T]}
 	 \sup_{x\in \cO} 
	 \left[ 
      \frac{|g_n(x)|}{V(T,x)} + \frac{|h_n(t,x)|}{V(t,x)}
 	 \right] 
	 < \infty. 
	\end{equation} 
Combining this with \eqref{convergence_result:convergence_ass} 
demonstrates Item~\eqref{convergence_result:item0a}. 
Next observe that Item~\eqref{convergence_result:item0a} 
and \cref{well_definedness_of_the_integrals} establish
Item~\eqref{convergence_result:item0b}. Next note that the 
hypothesis that for all 
	$t\in [0,T]$, 
	$s\in [t,T]$, 
	$x\in \cO$ 
it holds that 
	$\EXP{V(s,X^{t,x}_s)} \leq V(t,x)$ 
ensures that for all 
	$n\in\N$, 
	$t\in [0,T]$, 
	$x\in\cO$
it holds that 
	\begin{equation}
	\begin{split}
	& 
	\frac{\Exp{|g_{n}(X^{t,x}_{T})-g_{0}(X^{t,x}_{T})|}}{V(t,x)}
	= 
	\Exp{
		\frac{|g_{n}(X^{t,x}_{T})-g_{0}(X^{t,x}_{T})|}{V(T,X^{t,x}_{T})}
		\cdot 
		\frac{V(T,X^{t,x}_{T})}{V(t,x)}}
	\\
	& \qquad \qquad \leq 
	\left[
	\sup_{y\in \cO}
	\left( 
	\frac{|g_{n}(y)-g_{0}(y)|}
	{V(T,y)}
	\right) \right]
	\cdot 
	\frac{\Exp{V(T,X^{t,x}_{T})}}{V(t,x)} 
	\leq 
	\sup_{y\in \cO}
	\left( 
	\frac{|g_{n}(y)-g_{0}(y)|}
	{V(T,y)}
	\right) . 
	\end{split}
	\end{equation}
	This and \eqref{convergence_result:convergence_ass}
	establish that 
	\begin{equation}
	\label{convergence_result:g_convergence}
	\limsup_{n\to\infty} 
	\left[ 
	\sup_{t\in [0,T]}
	\sup_{x\in \cO} 
	\left( 
	\frac{
		|\Exp{g_{n}(X^{t,x}_{T})}
		-
		\Exp{g_{0}(X^{t,x}_{T})}|
	}{V(t,x)}
	\right)
	\right] 
	= 0. 
	\end{equation}
Furthermore, note that the hypothesis that for all 
	$t\in [0,T]$, 
	$s\in [t,T]$, 
	$x\in \cO$ 
it holds that 
	$\EXP{V(s,X^{t,x}_s)} \leq V(t,x)$ 
assures that for all 
	$n\in\N$, 
	$t\in [0,T]$, 
	$x\in\cO$
it holds that 
	\begin{equation}
	\begin{split}
	& 
	\frac{\Exp{ 
			\int_t^T 
			\left| 
			h_{n}(s,X^{t,x}_s)
			- 
			h_{0}(s,X^{t,x}_s) 
			\right| \,ds 
	}}{V(t,x)}
	\\
	& \qquad \qquad 
	=
	\int_t^T 
	\Exp{\frac{|h_{n}(s,X^{t,x}_s) 
			- 
			h_{0}(s,X^{t,x}_s)|}{V(s,X^{t,x}_s)}
		\cdot 
		\frac{V(s,X^{t,x}_s)}{V(t,x)}}
	\,ds 
	\\
	& 
	\qquad \qquad \qquad \qquad
	\leq 
	\int_t^T 
	\left[ 
	\sup_{r\in [0,T]}
	\sup_{y\in \cO}
	\left( 
	\frac
	{|h_{n}(r,y) - h_{0}(r,y)|}{V(r,y)}
	\right) 
	\right]
	\cdot 
	\frac{\Exp{V(s,X^{t,x}_s)}}{V(t,x)} \,ds 
	\\
	& 
	\qquad \qquad \qquad \qquad 
	\qquad \qquad
	\leq 
	T \cdot 
	\left[ 
	\sup_{r\in [0,T]}
	\sup_{y\in \cO}
	\left( 
	\frac
	{|h_{n}(r,y) - h_{0}(r,y)|}{V(r,y)}
	\right) 
	\right]. 
	\end{split}
	\end{equation}
This and \eqref{convergence_result:convergence_ass} imply that 
	\begin{equation}
	\limsup_{n\to\infty} 
	\left[ 
	\sup_{t\in [0,T]}
	\sup_{x\in \cO} 
	\left( 
	\frac{\left|\Exp{\int_t^T h_{n}(s,X^{t,x}_s)\,ds} - 
		\Exp{\int_t^T h_{0}(s,X^{t,x}_s)\,ds}\right|}{V(t,x)}
	\right) 
	\right] 
	= 
	0.
	\end{equation}
The triangle inequality,  \eqref{convergence_result:definition_of_un}, 
and 	\eqref{convergence_result:g_convergence} hence yield that 
	\begin{equation}
	\label{stochastic_fixed_point_continuity:u_convergence}
	\limsup_{n\to\infty} 
	\left[ 
	\sup_{t\in [0,T]}
	\sup_{x\in \cO} 
	\left( 
	\frac{|u_{n}(t,x) - u_{0}(t,x)|}{V(t,x)}
	\right) 
 	\right] 
 	= 0. 
	\end{equation} 
This establishes Item~\eqref{convergence_result:item1}. Moreover, observe that Item~\eqref{convergence_result:item1} and the fact that $V\colon [0,T]\times\cO \to (0,\infty)$ is continuous imply for every 
	compact set $\cK \subseteq \cO$ 
that 
	\begin{multline} 
	\limsup_{n\to\infty} 
	\left[ 
	\sup_{t\in [0,T]}
	\sup_{x\in \cK} 
	|u_{n}(t,x) - u_{0}(t,x)|
	\right] 
	\\
	\leq 
	\limsup_{n\to\infty} 
	\left[ 
	\sup_{t\in [0,T]}
	\sup_{x\in \cK} 
	\left( 
	\frac{|u_{n}(t,x) - u_{0}(t,x)|}{V(t,x)}
	\right) 
	\right]
	\left[  
	\sup_{t\in [0,T]}\sup_{x\in \cK} V(t,x)
	\right] 
	= 0.
	\end{multline} 
This establishes 	Item~\eqref{convergence_result:item2}. The proof of \cref{convergence_result} is thus completed. 
\end{proof}

\begin{lemma}
\label{approximation_result_V1}
Let 
	$d\in\N$, 
	$T\in (0,\infty)$, 
let 
	$\norm{\cdot}\colon\R^d\to [0,\infty)$ be a norm on $\R^d$, 
let 
	$\cO\subseteq\R^d$ be a non-empty open set, 
for every 
	$r\in (0,\infty)$ 
let 
	$O_r\subseteq\cO$ 
satisfy 
	$O_r = \{x\in \cO\colon \norm{x}\leq r~\text{and}~\{y\in\R^d\colon\norm{y-x}<\nicefrac{1}{r}\}\subseteq\cO\}$, 
and let
	$h \in C([0,T]\times\cO,\R)$  
satisfy 
	$
	\inf_{r \in (0,\infty)} 
	[
	\sup_{t\in [0,T]}
	\sup_{x\in \cO\setminus O_r}
	|h(t,x)|
	]
	= 
	0$.
Then there exist compactly supported 
	$\mathfrak{h}_n \in C([0,T]\times\cO, \R)$, $n\in\N$, 
which satisfy that 
	\begin{equation} 
	\label{approximation_result_V1:claim}
	\limsup_{n\to\infty} 
	\left[ 
	\sup_{t\in [0,T]}
	\sup_{x\in \cO} 
	|\mathfrak{h}_n(t,x) - h(t,x)|
	\right]  
	= 0.
	\end{equation}
\end{lemma}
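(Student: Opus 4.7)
The plan is to define $\mathfrak{h}_n := \chi_n \cdot h$ for a sequence of continuous cutoff functions $\chi_n\colon\cO\to[0,1]$ satisfying $\chi_n\equiv 1$ on $O_n$ and $\operatorname{supp}(\chi_n)\subseteq O_{n+1}$. Since $h$ is continuous and $O_{n+1}$ is a closed bounded subset of $\R^d$ (hence compact), the product $\mathfrak{h}_n$ will be a continuous function with compact support in $[0,T]\times\cO$, and the uniform estimate \eqref{approximation_result_V1:claim} will fall out of the prescribed tail behaviour of $h$ together with the monotonicity of $r\mapsto O_r$.

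The only real work is constructing the cutoffs. Writing $d_\cO(x):=\inf\bigl\{\norm{x-y}\colon y\in\R^d\setminus\cO\bigr\}$, with the standing convention $d_\cO\equiv +\infty$ in the degenerate case $\cO=\R^d$, I would define
\begin{equation*}
\chi_n(x):=\max\!\bigl(0,\min(1,n+1-\norm{x})\bigr)\cdot\max\!\bigl(0,\min(1,n(n+1)\,d_\cO(x)-n)\bigr),
\end{equation*}
where the second factor is read as the constant $1$ if $\cO=\R^d$. Since $\norm{\cdot}$ and (when $\cO\neq\R^d$) $d_\cO$ are $1$-Lipschitz on $\R^d$, this formula gives a continuous function $\chi_n\colon\R^d\to[0,1]$. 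A direct check shows that the first factor equals $1$ on $\{\norm{x}\leq n\}$ and vanishes on $\{\norm{x}\geq n+1\}$, while the second factor equals $1$ on $\{d_\cO(x)\geq 1/n\}$ and vanishes on $\{d_\cO(x)\leq 1/(n+1)\}$. Recalling that $O_r=\{x\in\cO\colon\norm{x}\leq r,\ d_\cO(x)\geq 1/r\}$, this yields both $\chi_n\equiv 1$ on $O_n$ and $\operatorname{supp}(\chi_n)\subseteq O_{n+1}$ as required.

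With $\mathfrak{h}_n:=\chi_n h$ I would split the estimate on $O_n$ versus $\cO\setminus O_n$: on $O_n$ one has $\mathfrak{h}_n-h=0$, while on $\cO\setminus O_n$ the bound $|\mathfrak{h}_n-h|=(1-\chi_n)|h|\leq|h|$ holds, giving
\begin{equation*}
\sup_{t\in[0,T]}\sup_{x\in\cO}|\mathfrak{h}_n(t,x)-h(t,x)|\leq\sup_{t\in[0,T]}\sup_{x\in\cO\setminus O_n}|h(t,x)|.
\end{equation*}
Since $(O_r)_{r\in(0,\infty)}$ is nondecreasing in $r$, the right-hand side is nonincreasing in $n$, and the hypothesis $\inf_{r\in(0,\infty)}\sup_{t,\,x\in\cO\setminus O_r}|h(t,x)|=0$ then forces it to tend to zero, establishing \eqref{approximation_result_V1:claim}.

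I do not anticipate any genuine obstacle here; the only mild care is needed in producing a single continuous cutoff that simultaneously controls $\norm{x}$ from above and $d_\cO(x)$ from below in a way compatible with the definition of $O_r$, and in handling the degenerate case $\cO=\R^d$ in which the second control is vacuous.
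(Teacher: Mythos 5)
Your proof is correct and follows essentially the same strategy as the paper's: multiply $h$ by a continuous cutoff that equals $1$ on $O_n$ and has compact support inside $\cO$, then bound $|\mathfrak{h}_n-h|\le(1-\chi_n)|h|\le|h|\mathbbm{1}_{\cO\setminus O_n}$ and invoke the tail hypothesis together with the monotonicity of $r\mapsto O_r$. The only difference is in how the cutoff is produced: the paper invokes Urysohn's lemma with the compact set $O_n$ sitting inside the open set $U_n=\{x\in\cO\colon\Exists z\in O_n\colon\norm{z-x}<\nicefrac{1}{2n}\}$, whereas you build the cutoff explicitly from the $1$-Lipschitz functions $x\mapsto\norm{x}$ and $x\mapsto\inf\{\norm{x-y}\colon y\in\R^d\setminus\cO\}$, which makes the argument self-contained and avoids citing Urysohn at the small cost of handling the degenerate case $\cO=\R^d$ separately.
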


\begin{proof}[Proof of \cref{approximation_result_V1}]
Throughout this proof let 
	$U_n\subseteq\cO$, $n\in\N$, 
be the sets given by 
	$U_n = \{x\in\cO\colon (\Exists 
	z\in O_n\colon\norm{z-x}<\frac{1}{2n})\}$.
Note that for every 
	$n\in\N$ 
it holds that $O_n\subseteq\cO$ is a compact set, 
it holds that $U_n\subseteq\R^d$ is an open set which satisfies $U_n\subseteq\cO$, 
and 
it holds that $O_n \subseteq U_{n}$. Urysohn's lemma (cf., for example, Rudin~\cite[Lemma 2.12]{Rudin1987RealAndComplex}) hence ensures for every 
	$n\in\N$ 
that there exists $\varphi_n\in C([0,T]\times\cO,\R)$ which satisfies for all 
	$t\in [0,T]$,
	$x\in\cO$ 
that 
	$\mathbbm{1}_{[0,T]\times O_ n}(t,x) \leq \varphi_n(t,x) \leq 
	\mathbbm{1}_{[0,T]\times U_{n}}(t,x)$. 
Observe, in particular, that this implies that the functions 
	$\varphi_n\colon [0,T]\times\cO\to\R$, 
	$n\in\N$, 
have compact supports. In the next step we let 
	$\mathfrak{h}_n\colon [0,T]\times\cO \to \R$, $n\in\N$, 
satisfy for all 
	$n\in\N$, 
	$t\in [0,T]$, 
	$x\in \cO$ 
that 
	$\mathfrak{h}_n(t,x) = \varphi_n(t,x)h(t,x)$. 
Note that this and the fact that for every 
	$n\in\N$ 
it holds that $\varphi_n\in C( [0,T]\times\cO,\R)$ is compactly supported imply that for every 
	$n\in\N$ 
it holds that 
	$\mathfrak{h}_n\colon [0,T]\times\cO \to \R$ 
is a compactly supported continuous function. Moreover, observe that 
	\begin{equation} 
	\begin{split}
	&
	\limsup_{n\to\infty} 
	\left[ 
	\sup_{t\in [0,T]}
	\sup_{x\in \cO}
	|\mathfrak{h}_n(t,x) - h(t,x)|
	\right]  
	\\
	& \qquad  
	= 
	\limsup_{n\to\infty} 
	\left[ 
	\sup_{t\in [0,T]}
	\sup_{x\in \cO} 
	\left(
	[1-\varphi_n(t,x)]|h(t,x)|
	\right)
	\right] 
	\leq 
	\limsup_{n\to\infty} 
	\left[ 
	\sup_{t\in [0,T]}
	\sup_{x\in \cO\setminus O_n} 
	|h(t,x)|
	\right] 
	= 
	0. 
	\end{split}
	\end{equation} 
	This establishes \eqref{approximation_result_V1:claim}. The proof of \cref{approximation_result_V1} is thus completed.
\end{proof}

\begin{cor}
\label{approximation_result}
 Let 
 	$d\in\N$, 
 	$T\in (0,\infty)$, 
 let 
 	$\norm{\cdot}\colon\R^d\to [0,\infty)$ be a norm on $\R^d$, 
 let 
 	$\cO\subseteq\R^d$ be a non-empty open set, 
 for every 
 	$r\in (0,\infty)$ 
 let 
 	$O_r\subseteq\cO$ 
 	satisfy 
 	$O_r = \{x\in \cO\colon \norm{x}\leq r~\text{and}~\{y\in\R^d\colon\norm{y-x}<\nicefrac{1}{r}\}\subseteq\cO\}$, 
 let
	$h \in C([0,T]\times\cO,\R)$, 
	$V\in C([0,T]\times\cO,(0,\infty))$, 
 and assume that	
$
	\inf_{r \in (0,\infty)} 
	[
	\sup_{t\in [0,T]}
	\sup_{x\in \cO\setminus O_r}
	(
		\frac{|h(t,x)|}{V(t,x)}
	)
	]
	= 
	0$.
Then there exist compactly supported 
 	$\mathfrak{h}_n \in C([0,T]\times\cO, \R)$, $n\in\N$, 
which satisfy that 
 \begin{equation} 
 \label{approximation_result:claim}
  \limsup_{n\to\infty} 
  \left[ 
   \sup_{t\in [0,T]}
   \sup_{x\in \cO} 
   \left(
   \frac{|\mathfrak{h}_n(t,x) - h(t,x)|}{V(t,x)}
   \right)
  \right]  
  = 0.
 \end{equation}
\end{cor}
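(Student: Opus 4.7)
The plan is to reduce the weighted approximation claim to the unweighted one already established in \cref{approximation_result_V1}, by dividing out the Lyapunov-type function $V$.

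First I would introduce the auxiliary function $\tilde h\colon [0,T]\times\cO\to\R$ defined by $\tilde h(t,x) = h(t,x)/V(t,x)$. Since $h$ is continuous and $V$ is continuous with strictly positive values, $\tilde h$ lies in $C([0,T]\times\cO,\R)$. The hypothesis of the corollary rewrites as
\begin{equation}
\inf_{r\in(0,\infty)}\left[\sup_{t\in[0,T]}\sup_{x\in\cO\setminus O_r}|\tilde h(t,x)|\right]=0,
\end{equation}
which is exactly the hypothesis needed to apply \cref{approximation_result_V1} to $\tilde h$. This yields a sequence of compactly supported continuous functions $\tilde{\mathfrak{h}}_n\in C([0,T]\times\cO,\R)$ such that $\sup_{t,x}|\tilde{\mathfrak{h}}_n(t,x)-\tilde h(t,x)|\to 0$ as $n\to\infty$.

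Next I would set $\mathfrak{h}_n(t,x) := V(t,x)\,\tilde{\mathfrak{h}}_n(t,x)$ for $n\in\N$, $(t,x)\in[0,T]\times\cO$. Continuity of $\mathfrak{h}_n$ follows from continuity of $V$ and $\tilde{\mathfrak{h}}_n$. For compact support, note that the support of $\mathfrak{h}_n$ is contained in the support of $\tilde{\mathfrak{h}}_n$ (since $\mathfrak{h}_n$ vanishes wherever $\tilde{\mathfrak{h}}_n$ vanishes), which is compact by construction; hence $\mathfrak{h}_n$ is compactly supported as well.

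Finally, for every $n\in\N$, $(t,x)\in[0,T]\times\cO$ we have
\begin{equation}
\frac{|\mathfrak{h}_n(t,x)-h(t,x)|}{V(t,x)}=\frac{|V(t,x)\tilde{\mathfrak{h}}_n(t,x)-V(t,x)\tilde h(t,x)|}{V(t,x)}=|\tilde{\mathfrak{h}}_n(t,x)-\tilde h(t,x)|,
\end{equation}
so taking suprema over $(t,x)\in[0,T]\times\cO$ and letting $n\to\infty$ gives \eqref{approximation_result:claim}. There is essentially no obstacle here; the only thing to be careful about is that $\mathfrak{h}_n$ inherits compact support from $\tilde{\mathfrak{h}}_n$ rather than from any boundedness of $V$ (which is not assumed), and the verification above makes this transparent.
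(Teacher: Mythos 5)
Your argument is correct and is essentially the same as the paper's: divide $h$ by $V$ to obtain a function to which \cref{approximation_result_V1} applies, then multiply the resulting compactly supported approximants back by $V$. Your explicit verification that $\mathfrak{h}_n$ inherits compact support (since it vanishes wherever $\tilde{\mathfrak{h}}_n$ does, independently of any boundedness of $V$) is a small but welcome bit of care that the paper leaves implicit.
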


\begin{proof}[Proof of \cref{approximation_result}] 
Throughout this proof let
	$g\colon [0,T]\times\cO \to \R$ 
satisfy for all 
	$t\in [0,T]$, 
	$x\in \cO$ 
that 
	$g(t,x) = \frac{h(t,x)}{V(t,x)}$. 
Observe that the assumption that 
 	$h \in C([0,T]\times\cO,\R)$, 
the assumption that 
 	$V \in C([0,T]\times\cO,(0,\infty))$, 
and the assumption that 
 	$\inf_{r\in (0,\infty)} [\sup_{t\in [0,T]} \sup_{x\in \cO \setminus O_r} (\frac{|h(t,x)|}{V(t,x)})] = 0$ 
prove that 
	$g\in C([0,T]\times\cO,\R)$ 
and 
	\begin{equation} 
 	 \inf_{r\in (0,\infty)} \left[\sup_{t\in [0,T]} \sup_{x\in \cO\setminus O_r} |g(t,x)|\right] = 0. 
 	\end{equation} 
\cref{approximation_result_V1} (with $h=g$ in the notation of \cref{approximation_result_V1}) therefore ensures that there exist compactly supported 
 	$\mathfrak{g}_n \in C([0,T]\times\cO,\R)$, $n\in\N$, 
 which satisfy that 
 	\begin{equation} 
 	 \limsup_{n\to\infty} \left[ 
 	  \sup_{t\in [0,T]}\sup_{x\in \cO} |\mathfrak{g}_n(t,x) - g(t,x)|
 	 \right] = 0. 
 	\end{equation}
 Next let 
 	$\mathfrak{h}_n\colon [0,T]\times\cO\to\R$, $n\in\N$, 
 satisfy for all 
 	$n\in\N$, 
 	$t\in [0,T]$, 
 	$x\in \cO$ 
 that 
 	$\mathfrak{h}_n(t,x) = \mathfrak{g}_n(t,x) V(t,x)$. 
 Hence, we obtain that for all 
	$n\in\N$ 
 it holds that 
 	$\mathfrak{h}_n \in C([0,T]\times\cO,\R)$ 
 and 
	\begin{equation} 
     \limsup_{k \to \infty} 
     \left[ 
     \sup_{t\in [0,T]}\sup_{x\in\cO} \left(\frac{|\mathfrak{h}_k(t,x) - h(t,x)|}{V(t,x)} \right) 
     \right] 
     = 
     \limsup_{k \to \infty} 
     \left[ 
     \sup_{t\in [0,T]}\sup_{x\in \cO} \left| \mathfrak{g}_k(t,x) - g(t,x) \right|
     \right] 
     = 0. 
    \end{equation} 
 This establishes \eqref{approximation_result:claim}. The proof of \cref{approximation_result} is thus completed.
\end{proof}

\subsection{Regularity properties for solutions of SFPEs}
\label{subsec:continuity}

In this section we establish \cref{stochastic_fixed_point_semilinear_continuity}, one of the building blocks of the proof of \cref{abstract_existence}. \cref{stochastic_fixed_point_semilinear_continuity} is a rather direct consequence of \cref{continuity_general_case} which, in turn, we prove by means of an argument building upon \crefrange{convergence_result}{continuity_compactly_supported_case}. 

\begin{lemma}
\label{continuity_compactly_supported_case}
Let 
 	$d\in\N$, 
 	$T\in (0,\infty)$, 
let 
 	$\norm{\cdot}\colon\R^d\to [0,\infty)$ 
be a norm on $\R^d$, 
let 
 	$\cO \subseteq  \R^d$ 
be a non-empty open set, 
let 
 	$(\Omega,\mathcal{F},\P)$ 
be a probability space, 
for every 
 	$t\in [0,T]$, 
 	$x\in \cO$ 
let 
 		$X^{t,x} = (X^{t,x}_s)_{s\in [t,T]}\colon [t,T]\times\Omega \to \cO$ 
be 
	$(\Borel([t,T])\otimes\cF)$/$\Borel(\cO)$-measurable, 
let 
	$g\in C(\cO, \R)$, 
	$h\in C([0,T]\times\cO, \R)$ 
be bounded, 
and assume for all 
 	$\varepsilon \in (0,\infty)$, 
 	$s\in [0,T]$ 
and all 
 	$(\mathfrak{t}_n,\mathfrak{x}_n)\in [0,T] \times \cO$, $n\in\N_0$,  
with
 	$\limsup_{n\to\infty} [ |\mathfrak{t}_n-\mathfrak{t}_0| + \norm{\mathfrak{x}_n-\mathfrak{x}_0}] = 0$
that 
 	$\limsup_{n\to\infty} [\P(\Norm{X^{\mathfrak{t}_n,\mathfrak{x}_n}_{\max\{s,\mathfrak{t}_n\}}-X^{\mathfrak{t}_0,\mathfrak{x}_0}_{\max\{s,\mathfrak{t}_0\}}}\geq\varepsilon)] = 0$.  	 
Then 
\begin{enumerate}[(i)] 
	\item \label{continuity_compactly_supported_case:item1}
it holds for all 
	$t\in [0,T]$, 
	$x\in \cO$ 
that 
	\begin{equation} 
	\Exp{\left|g(X^{t,x}_{T})\right|
	+ 
	\int_t^T \left| h(s,X^{t,x}_{s}) \right|\!\,ds } 
	< \infty
	\end{equation} 	
and 
	\item \label{continuity_compactly_supported_case:item2}
it holds that 
	\begin{equation}
	\label{stochastic_fixed_point_continuity_compactly_supported_case:definition_u}
	[0,T]\times\cO 
	\ni (t,x) \mapsto 
	\Exp{g(X^{t,x}_{T}) 
		+ 
		\int_t^T h(s,X^{t,x}_{s})\,ds } 
	\in \R
	\end{equation}
is continuous. 
\end{enumerate}
\end{lemma}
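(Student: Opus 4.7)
\textbf{Proof plan for \cref{continuity_compactly_supported_case}.}

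Item~\eqref{continuity_compactly_supported_case:item1} is immediate: since $g$ and $h$ are bounded, the integrand is pathwise bounded by $\|g\|_\infty + T\|h\|_\infty$. (Equivalently, apply \cref{well_definedness_of_the_integrals} with $V\equiv 1$, noting the supermartingale-type hypothesis $\E[V(s,X^{t,x}_s)] \le V(t,x)$ holds trivially.)

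For Item~\eqref{continuity_compactly_supported_case:item2}, the plan is to prove sequential continuity at an arbitrary point $(\mathfrak t_0,\mathfrak x_0)\in[0,T]\times\cO$. Fix a sequence $(\mathfrak t_n,\mathfrak x_n)\to(\mathfrak t_0,\mathfrak x_0)$ and write $u(t,x)=\E[g(X^{t,x}_T)]+\E[\int_t^T h(s,X^{t,x}_s)\,ds]$. For the terminal-value term, the hypothesis specialized to $s=T$ (so that $\max\{T,\mathfrak t_n\}=T$ for all $n$) gives $X^{\mathfrak t_n,\mathfrak x_n}_T\to X^{\mathfrak t_0,\mathfrak x_0}_T$ in probability; continuity of $g$ combined with the continuous mapping theorem for convergence in probability and boundedness of $g$ (bounded convergence) yields $\E[g(X^{\mathfrak t_n,\mathfrak x_n}_T)]\to\E[g(X^{\mathfrak t_0,\mathfrak x_0}_T)]$.

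The main work is the integral term, where the lower limit of integration moves with $n$. The key rewriting is
\begin{equation*}
\int_{\mathfrak t_n}^T h(s,X^{\mathfrak t_n,\mathfrak x_n}_s)\,ds
=\int_0^T \mathbbm{1}_{[\mathfrak t_n,T]}(s)\,h\bigl(s,X^{\mathfrak t_n,\mathfrak x_n}_{\max\{s,\mathfrak t_n\}}\bigr)\,ds,
\end{equation*}
which holds because $\max\{s,\mathfrak t_n\}=s$ on $[\mathfrak t_n,T]$. Boundedness of $h$ together with Fubini's theorem gives
\begin{equation*}
\E\!\left[\int_{\mathfrak t_n}^T h(s,X^{\mathfrak t_n,\mathfrak x_n}_s)\,ds\right]
=\int_0^T \mathbbm{1}_{[\mathfrak t_n,T]}(s)\,\E\!\left[h\bigl(s,X^{\mathfrak t_n,\mathfrak x_n}_{\max\{s,\mathfrak t_n\}}\bigr)\right]ds.
\end{equation*}
I would then apply dominated convergence twice. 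First, for each fixed $s\in[0,T]\setminus\{\mathfrak t_0\}$, the hypothesis gives convergence in probability of $X^{\mathfrak t_n,\mathfrak x_n}_{\max\{s,\mathfrak t_n\}}$ to $X^{\mathfrak t_0,\mathfrak x_0}_{\max\{s,\mathfrak t_0\}}$; continuity of $h(s,\cdot)$ and boundedness (continuous mapping plus bounded convergence) give convergence of the inner expectations, while $\mathbbm{1}_{[\mathfrak t_n,T]}(s)\to\mathbbm{1}_{[\mathfrak t_0,T]}(s)$ for all such $s$. Second, since the resulting integrand is dominated by $\|h\|_\infty$ on the finite interval $[0,T]$, another bounded convergence on Lebesgue measure lets me pass the limit under $\int_0^T$, producing $\int_{\mathfrak t_0}^T\E[h(s,X^{\mathfrak t_0,\mathfrak x_0}_s)]\,ds=\E[\int_{\mathfrak t_0}^T h(s,X^{\mathfrak t_0,\mathfrak x_0}_s)\,ds]$ by Fubini in reverse. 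Combined with the terminal-value convergence, this gives $u(\mathfrak t_n,\mathfrak x_n)\to u(\mathfrak t_0,\mathfrak x_0)$, which proves continuity.

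The main obstacle is purely bookkeeping: the integration domain varies with $n$, so one cannot directly invoke dominated convergence inside the expectation. The indicator-plus-$\max$ rewriting above is what allows one to reduce everything to pointwise-in-$s$ limits of expectations, after which two successive applications of bounded convergence (justified by the boundedness of $g$ and $h$) complete the argument.
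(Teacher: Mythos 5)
Your proof is correct and follows essentially the same route as the paper's: you use the same $\max\{s,\mathfrak t_n\}$ rewriting, the continuous mapping theorem for convergence in probability, bounded/Vitali convergence for the inner expectations, and dominated convergence for the outer Lebesgue integral. The only cosmetic difference is that you absorb the moving lower limit into an indicator $\mathbbm{1}_{[\mathfrak t_n,T]}(s)$ on the fixed interval $[0,T]$, whereas the paper splits the integral as $\int_{\mathfrak t_n}^{\mathfrak t_0}+\int_{\mathfrak t_0}^{T}$ and bounds the first piece by $|\mathfrak t_n-\mathfrak t_0|\,\|h\|_\infty$; these are interchangeable bookkeeping devices for the same estimate.
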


\begin{proof}[Proof of \cref{continuity_compactly_supported_case}]
Throughout this proof let 
 	$(\mathfrak{t}_n,\mathfrak{x}_n) \in [0,T] \times \cO$, $n\in\N_0$, 
satisfy  
 	$\limsup_{n\to\infty} [|\mathfrak{t}_n-\mathfrak{t}_0| + \norm{\mathfrak{x}_n-\mathfrak{x}_0}] 
 	= 0$.  
Note that \cref{well_definedness_of_the_integrals} establishes Item~\eqref{continuity_compactly_supported_case:item1}. Next we prove Item~\eqref{continuity_compactly_supported_case:item2}. For this we intend to show that  
	\begin{equation}
	\label{continuity_compactly_supported_case:sufficient_to_prove}
	\limsup_{n\to\infty} 
	\left[ 
	\Exp{|g(X^{\mathfrak{t}_n,\mathfrak{x}_n}_T) - g(X^{\mathfrak{t}_0,\mathfrak{x}_0}_T)|}
	+ 
	\left|\Exp{\int_{\mathfrak{t}_n}^T h(s,X^{\mathfrak{t}_n,\mathfrak{x}_n}_s)\,ds} 
	- 
	\Exp{\int_{\mathfrak{t}_0}^T h(s,X^{\mathfrak{t}_0,\mathfrak{x}_0}_s)\,ds} \right|
	\right]
	= 0.
	\end{equation}
Next note that the fact that $g\colon\cO\to\R$ and $h\colon [0,T]\times\cO\to\R$ are continuous ensures that for all 
 	$\varepsilon \in (0,\infty)$, 
 	$s\in [0,T]$  
it holds that 
	\begin{equation}
	\label{continuity_compactly_supported_case:stochastic_convergence}
	\limsup_{n\to\infty} 
	\big[ 
	\P(
	| 
	g(X^{\mathfrak{t}_n,\mathfrak{x}_n}_T) - g(X^{\mathfrak{t}_0,\mathfrak{x}_0}_{T}) |
	+ 
	| 
	h(s,X^{\mathfrak{t}_n,\mathfrak{x}_n}_{\max\{s,\mathfrak{t}_n\}}) - h(s,X^{\mathfrak{t}_0,\mathfrak{x}_0}_{\max\{s,\mathfrak{t}_0\}}) 
	|
	\geq \varepsilon
	)
	\big] 
	= 0
	\end{equation}
(cf., for example, Kallenberg~\cite[Lemma 4.3]{Kallenberg2002Foundations}). 
Combining this and the fact that $g\colon\cO\to\R$ and $h\colon [0,T]\times\cO\to\R$ are bounded with Vitali's convergence theorem ensures that for all 
 	$s\in [0,T]$ 
it holds that 
	\begin{equation} 
	\label{continuity_compactly_supported_case:first_convergence}
	\limsup_{n\to\infty}  
	\left(
	\Exp{  
	\left|
	g(X^{\mathfrak{t}_n,\mathfrak{x}_n}_T) - g(X^{\mathfrak{t}_0,\mathfrak{x}_0}_T)
	\right|
	}
	+ 
	\Exp{  
	\big|
	h(s,X^{\mathfrak{t}_n,\mathfrak{x}_n}_{\max\{s,\mathfrak{t}_n\}}) 
	- 
	h(s,X^{\mathfrak{t}_0,\mathfrak{x}_0}_{\max\{s,\mathfrak{t}_0\}})
	\big|
	}
	\right) 
	= 0. 
	\end{equation} 
Lebesgue's dominated convergence theorem   
and the fact that $h\colon [0,T]\times\cO \to \R$ is bounded hence imply that 
\begin{equation} 
	 \limsup_{n\to\infty} 
	 \int_{\mathfrak{t}_0}^{T}
	  \Exp{\big| h(s,X^{\mathfrak{t}_n,\mathfrak{x}_n}_{\max\{s,\mathfrak{t}_n\}})
	  - 
	  h(s,X^{\mathfrak{t}_0,\mathfrak{x}_0}_{\max\{s,\mathfrak{t}_0\}})\big|}\!\,ds
	 = 0. 
\end{equation} 
This yields that 
	\begin{equation} 
	\begin{split}
	& 
	\limsup_{n\to\infty} 
	\left| 
		\Exp{ \int_{\mathfrak{t}_n}^T h(s,X^{\mathfrak{t}_n,\mathfrak{x}_n}_s)\,ds }
	    - 
	    \Exp{\int_{\mathfrak{t}_0}^T h(s,X^{\mathfrak{t}_0,\mathfrak{x}_0}_s)\,ds }
	\right| 
	\\
	& 
	= 
	\limsup_{n\to\infty} 
	\left| 
		\Exp{\int_{\mathfrak{t}_n}^T h(s,X^{\mathfrak{t}_n,\mathfrak{x}_n}_{\max\{s,\mathfrak{t}_n\}})\,ds } 
		- 
		\Exp{\int_{\mathfrak{t}_0}^T h(s,X^{\mathfrak{t}_0,\mathfrak{x}_0}_{\max\{s,\mathfrak{t}_0\}})\,ds }
	\right| 
	\\ 
	& 
	= 
	\limsup_{n\to\infty} 
	\left|
	\Exp{
	  	\int_{\mathfrak{t}_n}^{\mathfrak{t}_0}
	   	h(s,X^{\mathfrak{t}_n,\mathfrak{x}_n}_{\max\{s,\mathfrak{t}_n\}})\,ds
	   	+ 
		\int_{\mathfrak{t}_0}^T
	    \left(
	    	h(s,X^{\mathfrak{t}_n,\mathfrak{x}_n}_{\max\{s,\mathfrak{t}_n \} }) -h(s,X^{\mathfrak{t}_0,\mathfrak{x}_0}_{\max\{s,\mathfrak{t}_0 \} })
	    \right)\!\,ds
	}
	\right|
	\\
	& \leq 
	\limsup_{n\to\infty} 
	\left( 
	\Exp{ 
	\left|
		\int_{\mathfrak{t}_n}^{\mathfrak{t}_0} 
		h(s,X^{\mathfrak{t}_n,\mathfrak{x}_n}_{\max\{s,\mathfrak{t}_n\}})\,ds	
	\right|}
	+
	\Exp{
		\int_{\mathfrak{t}_0}^T \big| h(s,X^{\mathfrak{t}_n,\mathfrak{x}_n}_{\max\{s,\mathfrak{t}_n \}}) - h(s,X^{\mathfrak{t}_0,\mathfrak{x}_0}_{\max\{s,\mathfrak{t}_0\}})
	\big|\,ds} 
	\right) 
	\\
	& 
	\leq 
	\limsup_{n\to\infty} 
	\left( 
	|\mathfrak{t}_n-\mathfrak{t}_0| \!
	\left[ 
	\sup_{s\in [0,T]}\sup_{y\in \cO} |h(s,y)| \right]
	+ 
	\int_{\mathfrak{t}_0}^T  \Exp{\big|h(s,X^{\mathfrak{t}_n,\mathfrak{x}_n}_{\max\{s,\mathfrak{t}_n \} }) - h(s,X^{\mathfrak{t}_0,\mathfrak{x}_0}_{\max\{s,\mathfrak{t}_0\}}) \big|} \!\,ds 
	\right)
	\\
	& = 0. 
	\end{split}
	\end{equation} 
Combining this with \eqref{continuity_compactly_supported_case:first_convergence} demonstrates \eqref{continuity_compactly_supported_case:sufficient_to_prove}. The proof of \cref{continuity_compactly_supported_case} is thus completed.
\end{proof}

\begin{lemma}
	\label{continuity_general_case}
Let 
	$d\in\N$, 
	$T\in (0,\infty)$, 
let 
	$\norm{\cdot}\colon\R^d\to [0,\infty)$  
be a norm on $\R^d$,
let 
	$\cO\subseteq \R^d$ 
be a non-empty open set,
for every 
	$r\in (0,\infty)$ 
let 
	$O_r \subseteq \cO$ 
satisfy  
	$O_r 
	= 
	\{x\in\cO\colon \norm{x} \leq r
	~\text{and}~    \{
	y\in\R^d\colon \norm{y-x} < \nicefrac{1}{r}
	\}
	\subseteq \cO\}
	$, 
let 
	$(\Omega,\mathcal{F},\P)$ 
be a probability space, 
for every 
	$t\in [0,T]$, 
	$x\in \cO$ 
let 
	$X^{t,x} = (X^{t,x}_s)_{s\in [t,T]}\colon [t,T]\times\Omega \to \cO$ 
be $(\Borel([t,T])\otimes\cF)$/$\Borel(\cO)$-measurable, 
assume for all 
	$\varepsilon \in (0,\infty)$, 
	$s\in [0,T]$ 
and all 
	$(t_n,x_n)\in [0,T] \times \cO$, $n\in\N_0$,  
with 
	$\limsup_{n\to\infty} [ |t_n-t_0| + \norm{x_n-x_0}] = 0$ 
that 
	$\limsup_{n\to\infty} [\P(\Norm{X^{t_n,x_n}_{\max\{s,t_n\}}-X^{t_0,x_0}_{\max\{s,t_0\}}}\geq\varepsilon)] = 0$,
let
	$g \in C(\cO,\R)$, 
	$h \in C([0,T]\times\cO,\R)$, 
	$V\in C([0,T]\times\cO,(0,\infty))$ 
	and 
	$u\colon [0,T]\times\cO \to \R$
	satisfy for all 
		$t\in [0,T]$, 
		$s\in [t,T]$, 
		$x\in \cO$ 
	that 
	$
	\Exp{V(s,X^{t,x}_s)} 
	\leq 
	V(t,x)
	$, 
and assume for all 
	$t\in [0,T]$, 
	$x\in \cO$ 
that	
	$
	\inf_{r \in (0,\infty)} 
	[ 
	\sup_{s\in [0,T]}
	\sup_{y\in \cO\setminus O_r}
	( 
	\frac{|g(y)|}{V(T,y)} + \frac{|h(s,y)|}{V(s,y)}
	) 
	]
	= 
	0$
and 
	\begin{equation}
	\label{continuity_general_case:definition_u}
	u(t,x) = 
	\Exp{g(X^{t,x}_T) 
	+ 
	\int_t^T h(s,X^{t,x}_{s})\,ds }
	\end{equation}
(cf.\ Item~\eqref{convergence_result:item0b} of \cref{convergence_result}).
Then 
	\begin{enumerate}[(i)]
	\item
	\label{continuity_general_case:item1}
	it holds that $u\in C([0,T]\times\cO,\R)$ and  
	\item
	\label{continuity_general_case:item2} 
	it holds in the case of 
		$\sup_{r\in (0,\infty)}[ 
		\inf_{t\in [0,T]} 
		\inf_{x\in \cO\setminus O_r} 
		V(t,x)
		]
		= 
		\infty$ that 
		\begin{equation} 
		\lim_{r\to\infty} 
		\left[ 
		\sup_{t\in [0,T]}
		\sup_{x\in  \cO\setminus O_r} 
		\left( 
		\frac{|u(t,x)|}{V(t,x)}
		\right) 
		\right] = 0. 
		\end{equation}
	\end{enumerate}
\end{lemma}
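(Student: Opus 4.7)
The strategy for Item~(i) is to approximate $g$ and $h$ in the $V$-weighted uniform norm by bounded (in fact, compactly supported) continuous functions, reduce to the compactly supported case handled by \cref{continuity_compactly_supported_case}, and transfer continuity via the uniform-on-compacts convergence established in \cref{convergence_result}. Concretely, I would apply \cref{approximation_result} to $h$, and carry out an analogous Urysohn-based construction on $\cO$ (mimicking the proof of \cref{approximation_result_V1}) applied to $g$ with the weight $V(T,\cdot) \in C(\cO,(0,\infty))$, to obtain compactly supported $\mathfrak{g}_n \in C(\cO,\R)$ and $\mathfrak{h}_n \in C([0,T]\times\cO,\R)$ with
\begin{equation}
\limsup_{n\to\infty} \left[ \sup_{t\in [0,T]} \sup_{x\in \cO} \left( \frac{|\mathfrak{g}_n(x) - g(x)|}{V(T,x)} + \frac{|\mathfrak{h}_n(t,x) - h(t,x)|}{V(t,x)} \right) \right] = 0.
\end{equation}

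Next, for $n\in\N$ I would define $u_n\colon [0,T]\times\cO \to \R$ by
\begin{equation}
u_n(t,x) = \Exp{\mathfrak{g}_n(X^{t,x}_T) + \int_t^T \mathfrak{h}_n(s,X^{t,x}_s)\,ds}.
\end{equation}
Because compactly supported continuous functions are bounded, \cref{continuity_compactly_supported_case} yields $u_n \in C([0,T]\times\cO,\R)$ for every $n\in\N$. For the sequences $g_0 := g$, $h_0 := h$, $g_n := \mathfrak{g}_n$, $h_n := \mathfrak{h}_n$, compact support makes the vanishing-at-infinity hypothesis of \cref{convergence_result} trivial, so Item~(iv) of \cref{convergence_result} gives $u_n \to u$ uniformly on every compact subset of $[0,T]\times\cO$. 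Since $[0,T]\times\cO$ is locally compact and a uniform limit on compact sets of continuous functions is continuous, we conclude that $u \in C([0,T]\times\cO,\R)$, establishing Item~(i).

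For Item~(ii), I would proceed by a direct splitting estimate rather than going through approximation. Fix $\varepsilon \in (0,\infty)$ and, by the vanishing hypothesis, choose $r\in (0,\infty)$ with
$\sup_{s \in [0,T],\, y \in \cO \setminus O_r} \bigl( |g(y)|/V(T,y) + |h(s,y)|/V(s,y) \bigr) \leq \varepsilon$. Since $O_r \subseteq \cO$ is compact and $g, h$ are continuous, $M := \sup_{y \in O_r} |g(y)| + T \sup_{s \in [0,T],\, y \in O_r} |h(s,y)|$ is finite. Splitting each expectation in \eqref{continuity_general_case:definition_u} by the indicators $\mathbbm{1}_{\{X^{t,x}_s \in O_r\}}$ and $\mathbbm{1}_{\{X^{t,x}_s \notin O_r\}}$, bounding $|g| \leq \varepsilon V(T,\cdot)$ and $|h(s,\cdot)| \leq \varepsilon V(s,\cdot)$ outside $O_r$, and then applying the Lyapunov inequality $\Exp{V(s,X^{t,x}_s)} \leq V(t,x)$ yields
\begin{equation}
|u(t,x)| \leq M + (1+T)\varepsilon V(t,x).
\end{equation}
Dividing by $V(t,x)$ and using the assumption $\inf_{t \in [0,T]} \inf_{y \in \cO \setminus O_R} V(t,y) \to \infty$ as $R \to \infty$, the term $M/V(t,x)$ vanishes uniformly in $(t,x)$ with $x \in \cO \setminus O_R$, so $\limsup_{R\to\infty} \sup_{t \in [0,T],\, x \in \cO \setminus O_R} |u(t,x)|/V(t,x) \leq (1+T)\varepsilon$. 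Since $\varepsilon$ was arbitrary, Item~(ii) follows.

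The main obstacle is a bookkeeping one: \cref{approximation_result,approximation_result_V1} are stated for functions on $[0,T]\times\cO$, so one must carry out the analogous one-variable construction on $\cO$ for $g$ by Urysohn's lemma, since \cref{convergence_result} requires $g_n \in C(\cO,\R)$. This is a routine adaptation of the proof of \cref{approximation_result_V1} with the weight $V(T,\cdot)$, but must be done explicitly. Everything else is a direct invocation of the lemmas already established in the excerpt.
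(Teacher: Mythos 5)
Your proof of Item~(i) matches the paper's proof: both approximate $g,h$ by compactly supported continuous $\mathfrak{g}_n,\mathfrak{h}_n$ in the $V$-weighted uniform norm, deduce continuity of the approximants $\mathfrak{u}_n$ from \cref{continuity_compactly_supported_case}, and transfer continuity to $u$ through the uniform-on-compacts convergence in Item~\eqref{convergence_result:item2} of \cref{convergence_result}. Your remark about the one-variable analog of \cref{approximation_result} for $g$ is legitimate: the paper simply writes ``(cf.~\cref{approximation_result})'' without spelling this out, and indeed one either repeats the Urysohn construction on $\cO$ with weight $V(T,\cdot)$ or lifts $g$ to $[0,T]\times\cO$ and restricts to the slice $t=T$; either patch is routine.

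Your proof of Item~(ii) is correct but takes a genuinely different route from the paper. The paper re-uses the approximating sequence: it shows, for each fixed $n$, that $|\mathfrak{u}_n(t,x)|/V(t,x)\to 0$ at infinity (because $\mathfrak{u}_n$ is bounded while $V$ blows up), then feeds Item~\eqref{convergence_result:item1} of \cref{convergence_result} into a triangle-inequality/infimum argument. You instead argue directly from the hypotheses with a splitting estimate: fix $\varepsilon$, pick $r$ so that $|g|\leq\varepsilon V(T,\cdot)$ and $|h(s,\cdot)|\leq\varepsilon V(s,\cdot)$ off $O_r$, bound the contribution from the compact set $O_r$ by a constant $M$, and invoke the Lyapunov inequality $\Exp{V(s,X^{t,x}_s)}\leq V(t,x)$ to get $|u(t,x)|\leq M+(1+T)\varepsilon V(t,x)$, after which $V\to\infty$ kills the $M/V$ term. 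This argument is self-contained, avoids the approximating sequence entirely, and is arguably more elementary than the paper's; it trades the ``bounded approximant plus convergence'' bookkeeping for a single-shot $\varepsilon$-decomposition. The only minor point to mind is the convention $\sup\emptyset=0$ if $O_r$ is empty for small $r$ (the paper's proof of \cref{convergence_result} handles this with the $\cup\{0\}$ device), and it is worth saying explicitly that the integrability needed to split the expectation is provided by \cref{well_definedness_of_the_integrals}.
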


\begin{proof}[Proof of \cref{continuity_general_case}]
Throughout this proof let 
	$\mathfrak{g}_{n}\colon\cO\to\R$, $n\in\N$, 
and 
	$\mathfrak{h}_{n}\colon [0,T]\times\cO\to\R$, $n\in\N$, 
be compactly supported continuous functions which satisfy that 
	\begin{equation}
	\label{continuity_general_case:g_h_approximation}
	\limsup_{n\to\infty} 
	\left[ 
	\sup_{t\in [0,T]}
	\sup_{x\in \cO} 
	\left( 
	\frac{| \mathfrak{g}_{n}(x) - g(x) |}
	{V(T,x)}
	+ 
	\frac{| \mathfrak{h}_{n}(t,x) - h(t,x) |}{V(t,x)} 
	\right) 
	\right] 
	= 0  
	\end{equation}
(cf.~\cref{approximation_result})
and let 
	$\mathfrak{u}_n\colon [0,T]\times\cO \to \R$, $n\in\N$, 
satisfy for all 
	$n\in\N$, 
	$t\in [0,T]$, 
	$x\in \cO$ 
that 
	\begin{equation} 
	\mathfrak{u}_n(t,x) 
	= 
	\Exp{\mathfrak{g}_{n}(X^{t,x}_{T}) 
	+ 
	\int_t^T 
	\mathfrak{h}_{n}(s,X^{t,x}_s)\,ds}
	\end{equation} 
(cf.~\cref{well_definedness_of_the_integrals}). 
Note that \cref{continuity_compactly_supported_case} assures for every 
	$n\in\N$
that $\mathfrak{u}_n\colon [0,T]\times\cO \to \R$ 
is continuous. 
Next observe that the fact that 
	$\mathfrak{g}_n\colon \cO \to \R$, $n\in\N$, 
and 
	$\mathfrak{h}_n\colon [0,T]\times\cO\to\R$, 
	$n\in\N$, 
are compactly supported ensures that for every 
	$n\in\N$ 
there exists 
	$r \in (0,\infty)$ 
which satisfies that for all 
	$t\in [0,T]$, 
	$x\in \cO \setminus O_r$
it holds that 
	$\mathfrak{g}_n(x) = 0 = \mathfrak{h}_n(t,x)$. 
This implies for every 
	$n\in\N$ 
that 
\begin{equation}
 \inf_{r\in (0,\infty)} 
 \left[ 
 \sup_{t\in [0,T]} 
 \sup_{x\in\cO\setminus O_r} 
 \left( 
  \frac{|\mathfrak{g}_n(x)|}{V(T,x)} 
  + 
  \frac{|\mathfrak{h}_n(t,x)|}{V(t,x)}
 \right)  
 \right] 
 = 0. 
\end{equation}
Item~\eqref{convergence_result:item2} of \cref{convergence_result},  \eqref{continuity_general_case:g_h_approximation}, and the fact that $\mathfrak{u}_n\colon [0,T]\times\cO\to\R$, $n\in\N$, are continuous therefore imply that $u\colon [0,T]\times\cO\to\R$ is continuous. This establishes Item~\eqref{continuity_general_case:item1}. In the next step we prove Item~\eqref{continuity_general_case:item2}. For this we assume that 
	$
	\sup_{r\in (0,\infty)} 
	[ 
	\inf_{t\in [0,T]}
	\inf_{x\in \cO\setminus O_r} 
	V(t,x)
	]
	=
	\infty
	$.  
Note that this entails for every 
	$n\in\N$ 
that 
	\begin{equation}
	\begin{split}
	& 
	\limsup_{r\to\infty} 
	\left[ 
	\sup_{t\in [0,T]}
	\sup_{x\in \cO\setminus O_r} 
	\left( 
	\frac{|\mathfrak{u}_{n}(t,x)|}{V(t,x)}
	\right) 
	\right] 
	\\
	&
	\leq 
	\limsup_{r\to\infty} 
	\left( 
	\left[ 
	\sup_{t\in [0,T]}
	\sup_{x\in \cO} 
	 \left|\mathfrak{u}_n(t,x)\right| 
	\right] 
	\left[ 
	\sup_{t\in [0,T]}
	\sup_{x\in \cO\setminus O_r} 
	\left( 
	\frac{1}{V(t,x)}
	\right) 
	\right]
	\right)
	\\
	& 
	\leq 
	\limsup_{r\to\infty} 
	\left( 
	\left[ 
	\left( 
	\sup_{x\in\cO} 
	|\mathfrak{g}_{n}(x)| 
	\right)
	+ 
	T 
	\left(
	\sup_{t\in [0,T]}
	\sup_{x\in \cO} 
	|\mathfrak{h}_{n}(t,x)|
	\right)
	\right]
	\left[ 
	\sup_{t\in [0,T]}
	\sup_{x\in \cO\setminus O_r} 
	\left( 
	\frac{1}{V(t,x)}
	\right) 
	\right] 
	\right) 
	= 0. 
	\end{split}
	\end{equation}
Combining this with Item~\eqref{convergence_result:item1} of 
\cref{convergence_result} yields that  
\begin{equation}
\begin{split}
 &
 \limsup_{r\to\infty} 
 \left[ 
 \sup_{t\in [0,T]}
 \sup_{x\in \cO\setminus O_r} 
 \left( 
 \frac{|u(t,x)|}{V(t,x)}
 \right) 
 \right]
 \\
 & \leq 
 \inf_{n\in\N} 
 \left(
  \limsup_{r\to\infty} 
  \left[ 
   \sup_{t\in [0,T]}
  \sup_{x\in \cO\setminus O_r} 
  \left( 
  \frac{|u(t,x)-\mathfrak{u}_n(t,x)|+|\mathfrak{u}_n(t,x)|}{V(t,x)}
  \right) 
  \right] 
 \right)
 \\
 & = 
 \inf_{n\in\N} 
 \left( 
  \limsup_{r\to\infty}
   \left[ 
  \sup_{t\in [0,T]}
  \sup_{x\in \cO\setminus O_r} 
  \left( 
  \frac{|u(t,x)-\mathfrak{u}_n(t,x)|}{V(t,x)}
  \right) 
  \right]
 \right)
 \\
 & 
 \leq 
 \inf_{n\in\N} 
 \left( 
  \sup_{t\in [0,T]}
 \sup_{x\in \cO} 
 \left( 
 \frac{|u(t,x)-\mathfrak{u}_n(t,x)|}{V(t,x)}
 \right) 
 \right) 
 \\
 & 
 \leq 
 \limsup_{n\to\infty} 
 \left[
 \sup_{t\in [0,T]}
 \sup_{x\in \cO} 
 \left( 
 \frac{|u(t,x)-\mathfrak{u}_n(t,x)|}{V(t,x)}
 \right) 
 \right] 
 = 0. 
\end{split}
\end{equation}
This establishes Item\,\eqref{continuity_general_case:item2}. The proof of \cref{continuity_general_case} is thus completed. 
\end{proof}

\cref{continuity_general_case} allows to infer the next result, \cref{stochastic_fixed_point_semilinear_continuity}, which constitutes an important ingredient of the proof of \cref{abstract_existence}. 

\begin{cor}
\label{stochastic_fixed_point_semilinear_continuity}
Let 
	$d\in\N$, 
	$L,T\in (0,\infty)$, 
let 
	$\norm{\cdot}\colon\R^d \to [0,\infty)$ 
be a norm on $\R^d$,
let 
	$\cO\subseteq \R^d$ 
be a non-empty open set,
for every 
	$r\in (0,\infty)$ 
let 
	$O_r \subseteq \cO$ 
satisfy 
	$O_r 
	= 
	\{x\in\cO\colon \norm{x} \leq r
	~\text{and}~    
	\{
	y\in\R^d\colon \norm{y-x} < \nicefrac{1}{r}
	\}
	\subseteq \cO\}
	$, 
let 
	$(\Omega,\mathcal{F},\P)$ 
be a probability space, 
for every 
	$t\in [0,T]$, 
	$x\in \cO$ 
let 
	$
	X^{t,x}
	=
	(X^{t,x}_{s})_{s\in [t,T]}
	\colon [t,T]\times\Omega \to \cO
	$ 
be $(\Borel([t,T])\otimes\cF)$/$\Borel(\cO)$-measurable, 
assume for all 
	$\varepsilon \in (0,\infty)$, 
	$s\in [0,T]$ 
and all 
	$(t_n,x_n)\in [0,T] \times \cO$, $n\in\N_0$,  
with
	$\limsup_{n\to\infty} [ |t_n-t_0| + \norm{x_n-x_0}] = 0$
that 
	$\limsup_{n\to\infty} [\P(\Norm{X^{t_n,x_n}_{\max\{s,t_n\}}-X^{t_0,x_0}_{\max\{s,t_0\}}}\geq\varepsilon)] = 0$,
let 
	$f\in C([0,T]\times\cO\times\R,\R)$, 
	$g\in C(\cO,\R)$, 
	$u\in C([0,T]\times\cO,\R)$, 
	$V\in C([0,T]\times\cO,(0,\infty))$ 
satisfy for all 
	$t\in [0,T]$, 
	$s\in [t,T]$, 
	$x\in\cO$ 
that 
	$
	\EXP{V(s,X^{t,x}_{s})} 
	\leq 
	V(t,x)
	$, 
and assume for all 
	$t\in [0,T]$, 
	$x\in\cO$, 
	$v,w\in\R$ 
that 
	$\inf_{r\in (0,\infty)} 
	[ 
	\sup_{s\in [0,T]}
	\sup_{y\in\cO\setminus O_r}
	( 
	\frac{|f(s,y,0)|+|u(s,y)|}{V(s,y)}
	+ 
	\frac{|g(y)|}{V(T,y)}
	) 
	] 
	= 
	0
	$
and 
	$
	|f(t,x,v)-f(t,x,w)| 
	\leq 
	L | v - w |
	$.  
Then
	\begin{enumerate}[(i)]
	\item
	\label{stochastic_fixed_point_semilinear_continuity:item0}
	it holds for all 
		$t\in [0,T]$, 
		$x\in \cO$ 
	that 
		\begin{equation}
		\Exp{
		|g(X^{t,x}_{T})| + 
		\int_t^T |f(s,X^{t,x}_{s},u(s,X^{t,x}_{s}))|\,ds}
		< 
		\infty, 
		\end{equation}
	\item 
	\label{stochastic_fixed_point_semilinear_continuity:item1}
	it holds that 
		\begin{equation}
		\label{stochastic_fixed_point_semilinear_continuity:claim}
		[0,T]\times\cO \ni (t,x) 
		\mapsto 
		\Exp{g(X^{t,x}_{T}) 
			+ 
			\int_t^T f\big(s,X^{t,x}_{s},u(s,X^{t,x}_{s})\big)\,ds}
		\in \R
		\end{equation}
		is continuous, and 
	\item
	\label{stochastic_fixed_point_semilinear_continuity:item2} 
	it holds in the case of 
		$
		\sup_{r\in (0,\infty)}
		[ 
		\inf_{t\in [0,T]}
		\inf_{x\in \cO\setminus O_r}
		V(t,x)
		] = \infty$
	that 
		\begin{equation}
		\lim_{r\to\infty} 
		\left[ 
		\sup_{t\in [0,T]}
		\sup_{x\in \cO\setminus O_r} 
		\left( 
		\frac{
			\left|
			\Exp{g(X^{t,x}_{T}) 
				+ 
				\int_t^T f\big(s,X^{t,x}_{s},u(s,X^{t,x}_{s})\big)\,ds
			}
			\right|
		}{V(t,x)}
		\right) 
		\right] = 0.
		\end{equation}
	\end{enumerate}
\end{cor}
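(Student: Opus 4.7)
The plan is to reduce Corollary \ref{stochastic_fixed_point_semilinear_continuity} directly to Lemma \ref{continuity_general_case} by introducing the auxiliary function $h\colon [0,T]\times\cO\to\R$ given by
\begin{equation}
h(s,y) := f(s,y,u(s,y)),
\end{equation}
so that for every $t\in [0,T]$, $s\in [t,T]$, $x\in \cO$ it holds that $h(s,X^{t,x}_s)=f(s,X^{t,x}_s,u(s,X^{t,x}_s))$. Since $f\in C([0,T]\times\cO\times\R,\R)$ and $u\in C([0,T]\times\cO,\R)$, composition gives $h\in C([0,T]\times\cO,\R)$.

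The key verification is that this $h$ satisfies the growth hypothesis of \cref{continuity_general_case}. I would use the Lipschitz assumption in the form $|f(s,y,v)|\leq |f(s,y,0)|+L|v|$ to obtain, for all $s\in[0,T]$, $y\in\cO$,
\begin{equation}
\frac{|h(s,y)|}{V(s,y)} \;\leq\; \frac{|f(s,y,0)|}{V(s,y)} + L\frac{|u(s,y)|}{V(s,y)} \;\leq\; (1+L)\!\left[\frac{|f(s,y,0)|+|u(s,y)|}{V(s,y)}\right]\!.
\end{equation}
Combined with the hypothesis on $g$, this yields
\begin{equation}
\inf_{r\in(0,\infty)}\!\left[\sup_{s\in[0,T]}\sup_{y\in\cO\setminus O_r}\!\left(\frac{|g(y)|}{V(T,y)}+\frac{|h(s,y)|}{V(s,y)}\right)\right]=0,
\end{equation}
which is precisely the condition required in \cref{continuity_general_case}.

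Once this growth condition is in place, the rest is essentially bookkeeping. For Item \eqref{stochastic_fixed_point_semilinear_continuity:item0}, Item~\eqref{convergence_result:item0a} of \cref{convergence_result} (applied with constant sequences) turns the local decay into the global bound $\sup_{t,x}[\tfrac{|g(x)|}{V(T,x)}+\tfrac{|h(t,x)|}{V(t,x)}]<\infty$, and then \cref{well_definedness_of_the_integrals} delivers integrability. For Item \eqref{stochastic_fixed_point_semilinear_continuity:item1}, I would define $\tilde u(t,x):=\E[g(X^{t,x}_T)+\int_t^T h(s,X^{t,x}_s)\,ds]$, which is well-defined by the previous step, and then Item~\eqref{continuity_general_case:item1} of \cref{continuity_general_case} yields $\tilde u\in C([0,T]\times\cO,\R)$, which is exactly the continuity claim \eqref{stochastic_fixed_point_semilinear_continuity:claim}. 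For Item~\eqref{stochastic_fixed_point_semilinear_continuity:item2}, under the additional assumption $\sup_{r}[\inf_{t,\,x\in\cO\setminus O_r}V(t,x)]=\infty$, Item~\eqref{continuity_general_case:item2} of \cref{continuity_general_case} gives the desired decay of $|\tilde u(t,x)|/V(t,x)$.

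No genuine obstacle is anticipated: all the analytic work (the approximation by compactly supported continuous functions and the passage to the limit in the Lyapunov-weighted norm) has already been isolated in \cref{convergence_result,approximation_result,continuity_compactly_supported_case,continuity_general_case}. The only step that requires care is the Lipschitz bound relating $|h|$ to $|f(\cdot,\cdot,0)|+|u|$, so that the growth hypothesis of \cref{continuity_general_case} is matched to the hypothesis stated for the corollary.
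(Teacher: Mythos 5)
Your proof is correct and follows essentially the same route as the paper: both define $h(s,y)=f(s,y,u(s,y))$ (the paper does this implicitly), both use $|f(t,x,u(t,x))|\le|f(t,x,0)|+L|u(t,x)|$ together with the growth hypothesis on $f(\cdot,\cdot,0)$, $u$, and $g$ to match the assumptions of \cref{continuity_general_case}, and both then invoke \cref{well_definedness_of_the_integrals} (with Item~\eqref{convergence_result:item0a} of \cref{convergence_result}) for Item~\eqref{stochastic_fixed_point_semilinear_continuity:item0} and the two conclusions of \cref{continuity_general_case} for Items~\eqref{stochastic_fixed_point_semilinear_continuity:item1} and~\eqref{stochastic_fixed_point_semilinear_continuity:item2}.
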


\begin{proof}[Proof of \cref{stochastic_fixed_point_semilinear_continuity}]
First, observe that 
	\begin{equation}
	[0,T]\times\cO 
	\ni 
	(t,x) 
	\mapsto 
	f(t,x,u(t,x)) \in \R
	\end{equation}
is a continuous function which satisfies for all 
	$t\in [0,T]$, 
	$x\in \cO$ 
that 
	\begin{equation}
	|f(t,x,u(t,x))| 
	\leq 
	|f(t,x,0)| + L |u(t,x)| . 
	\end{equation}
The hypothesis that 
	$
	\inf_{r\in (0,\infty)} 
	[
	\sup_{t\in [0,T]}
	\sup_{x\in \cO\setminus O_r} 
	( 
	\frac{|f(t,x,0)|+|u(t,x)|}{V(t,x)}
	)
	]
	= 0
	$ 
therefore ensures that 
	\begin{equation}
	\begin{split}
	& \inf_{r\in (0,\infty)} 
	\left[ 
	\sup_{t\in [0,T]}
	\sup_{x\in\cO\setminus O_r}
	\left( 
	\frac{|f(t,x,u(t,x))|}{V(t,x)}
	\right)
	\right] 
	\\
	& 
	\qquad 
	\leq 
	\inf_{r\in (0,\infty)}
	\left[ 
	\sup_{t\in [0,T]}
	\sup_{x\in\cO\setminus O_r}
	\left( 
	\frac{|f(t,x,0)|}{V(t,x)}
	+ 
	L
	\frac{|u(t,x)|}{V(t,x)}
	\right)
	\right] 
	= 
	0. 
	\end{split}
	\end{equation}
\cref{well_definedness_of_the_integrals} and Item  \eqref{convergence_result:item0a} of \cref{convergence_result} hence establish Item~\eqref{stochastic_fixed_point_semilinear_continuity:item0}. Moreover, \cref{continuity_general_case} (with 
	$g=g$, 
	$h=([0,T]\times\cO\ni (t,x) \mapsto f(t,x,u(t,x))\in\R)$, 
	$u=([0,T]\times\cO\ni (t,x) \mapsto 
	\EXP{g(X^{t,x}_T)+\int_t^T f(s,X^{t,x}_s,u(s,X^{t,x}_s))\,ds}\in\R)$
in the notation of \cref{continuity_general_case}) establishes Items\, \eqref{stochastic_fixed_point_semilinear_continuity:item1} and \eqref{stochastic_fixed_point_semilinear_continuity:item2}. The proof of \cref{stochastic_fixed_point_semilinear_continuity}
	is thus completed.
\end{proof}

\subsection{Contractivity properties for SFPEs}
\label{subsec:lipschitz}

In this section we establish an elementary Lipschitz estimate (see  \cref{stochastic_fixed_point_measurable_setting_lipschitz_property} below) which will yield the contractivity needed in the proof of \cref{abstract_existence}. 

\begin{lemma}
	\label{stochastic_fixed_point_measurable_setting_lipschitz_property}
	Let 
	$d\in\N$, 
	$L,T\in (0,\infty)$,
	let 
	$\cO \subseteq  \R^d$ 
	be a non-empty open set, 
let 
	$(\Omega,\mathcal{F},\P)$ 
be a probability space, 
for every 
	$t\in [0,T]$, 
	$x\in\cO$	
let 
	$
	X^{t,x}=(X^{t,x}_s)_{s\in [t,T]}\colon [t,T]\times\Omega\to\cO$ 
be 
	$(\Borel([t,T]) \otimes \mathcal{F})$/$\Borel(\cO)$-measurable, 
let 
	$V\colon [0,T]\times\cO\to (0,\infty) $ 
be 
	$\Borel([0,T]\times\cO)$/$\Borel((0,\infty))$-measurable, 
assume for all 
	$t\in [0,T]$, 
	$s\in [t,T]$, 
	$x\in\cO$ 
that 
	$
	\Exp{V(s,X^{t,x}_{s})}
	\leq 
	V(t,x)
	$, 
let 
	$f\colon [0,T]\times\cO\times\R\to\R$ 
be  $\Borel([0,T]\times\cO\times\R)$/$\Borel(\R)$-measurable,  
assume for all 
	$t\in [0,T]$, 
	$x\in\cO$, 
	$v,w\in \R$ 
that 
	$
	|f(t,x,v)-f(t,x,w)| 
	\leq L | v -  w |$, 
let 
	$v,w\colon [0,T]\times\cO\to\R$ 
be 
	$\Borel([0,T]\times\cO)$/$\Borel(\R)$-measurable, 
and assume that
	\begin{equation}
	\sup_{t\in [0,T]}
	\sup_{x\in \cO} 
	\left[
	\frac{
		|v(t,x)| + |w(t,x)|}{V(t,x)} 
	\right] 
	< 
	\infty. 
	\end{equation}
Then it holds for all 
	$\lambda \in (0,\infty)$, 
	$t\in [0,T]$, 
	$x\in\cO$ 
that 
	\begin{equation}
	\label{stochastic_fixed_point_measurable_setting_lipschitz_property:claim}
	\begin{split}
	& 
	\Exp{
		\int_t^T \left| 
		f\big(s, X^{t,x}_{s}, v(s,X^{t,x}_{s})\big) - 
		f\big(s, X^{t,x}_{s}, w(s,X^{t,x}_{s})\big) 
		\right| \,ds
	}
	\\
	& 
	\qquad \qquad 
	\leq 
	\frac{L}{\lambda} e^{-\lambda t} V(t,x)
	\left[ 
	\sup_{s\in [0,T]}
	\sup_{y\in \cO} 
	\left( 
	\frac{e^{\lambda s} |v(s,y) - w(s,y)|}{V(s,y)}
	\right) 
	\right] . 
	\end{split}
	\end{equation}
\end{lemma}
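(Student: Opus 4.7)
The strategy is a direct weighted $L^1$-type estimate. The statement is a quantitative Lipschitz bound where the weight $e^{\lambda s}/V(s,y)$ is introduced precisely so that the integrand decomposes into a uniform weighted supremum of $|v-w|$ times an expected value controlled by the Lyapunov-type hypothesis $\Exp{V(s,X^{t,x}_s)}\leq V(t,x)$. I expect no real obstacle: the proof should proceed in essentially one line, with the only care being to manage measurability/Fubini and to make the multiplicative trick $1 = \frac{V(s,y)e^{\lambda s}}{V(s,y)e^{\lambda s}}$ explicit.

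The plan is as follows. Fix $\lambda\in(0,\infty)$, $t\in[0,T]$, $x\in\cO$, and abbreviate
\begin{equation}
C \;=\; \sup_{s\in [0,T]}\sup_{y\in\cO} \left(\frac{e^{\lambda s}|v(s,y)-w(s,y)|}{V(s,y)}\right),
\end{equation}
which is finite by the hypothesis that $\sup_{s,y} (|v(s,y)|+|w(s,y)|)/V(s,y) <\infty$ together with the finiteness of $T$. First, I would invoke the Lipschitz assumption on $f$ in its third argument to get, for every $s\in [t,T]$, the pointwise $\P$-a.s.\ estimate
\begin{equation}
\bigl|f(s,X^{t,x}_s,v(s,X^{t,x}_s))-f(s,X^{t,x}_s,w(s,X^{t,x}_s))\bigr| \;\leq\; L\,|v(s,X^{t,x}_s)-w(s,X^{t,x}_s)|.
\end{equation}
Next, multiplying and dividing by $V(s,X^{t,x}_s)e^{\lambda s}$ (legitimate because $V>0$) and bounding the resulting quotient by $C$, one obtains
\begin{equation}
\bigl|f(s,X^{t,x}_s,v(s,X^{t,x}_s))-f(s,X^{t,x}_s,w(s,X^{t,x}_s))\bigr| \;\leq\; L\,C\,e^{-\lambda s}\,V(s,X^{t,x}_s).
\end{equation}

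Then I would apply Fubini's theorem (justified by the nonnegativity and measurability of the integrand) to interchange expectation and the time integral, and subsequently use the Lyapunov hypothesis $\Exp{V(s,X^{t,x}_s)}\leq V(t,x)$ for each $s\in [t,T]$, which gives
\begin{equation}
\Exp{\int_t^T \bigl|f(s,X^{t,x}_s,v(s,X^{t,x}_s))-f(s,X^{t,x}_s,w(s,X^{t,x}_s))\bigr|\,ds} \;\leq\; L\,C\,V(t,x)\int_t^T e^{-\lambda s}\,ds.
\end{equation}
Finally the elementary bound $\int_t^T e^{-\lambda s}\,ds = \lambda^{-1}(e^{-\lambda t}-e^{-\lambda T}) \leq \lambda^{-1}e^{-\lambda t}$ yields exactly the claim~\eqref{stochastic_fixed_point_measurable_setting_lipschitz_property:claim}. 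The whole proof is essentially one display chain; the only nontrivial ingredient is recognizing that the factor $e^{\lambda s}$ placed in $C$ compensates the integration factor $e^{-\lambda s}$, so that the contraction constant $L/\lambda$ can be made arbitrarily small by choosing $\lambda$ large — this is precisely what will be needed when Banach's fixed point theorem is invoked in the proof of \cref{abstract_existence}.
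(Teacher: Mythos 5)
Your proof is correct and follows essentially the same route as the paper's: apply the Lipschitz hypothesis pointwise, insert the weight $e^{\lambda s}/V(s,X^{t,x}_s)$, pull out the supremum, invoke Fubini and the Lyapunov bound $\Exp{V(s,X^{t,x}_s)}\leq V(t,x)$, and finish with $\int_t^T e^{-\lambda s}\,ds\leq \lambda^{-1}e^{-\lambda t}$. The only cosmetic difference is that the paper carries the weighted supremum inline rather than abbreviating it as $C$, and records explicitly the measurability of the integrand before applying Fubini; neither changes the substance.
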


\begin{proof}[Proof of \cref{stochastic_fixed_point_measurable_setting_lipschitz_property}]
First, note that the fact that 
	$f\colon [0,T]\times\cO\times\R\to\R$ 
is $\Borel([0,T]\times\cO\times\R)$/$\Borel(\R)$-measurable, 
the fact that 
	$v,w\colon [0,T]\times\cO\to\R$ 
are $\Borel([0,T]\times\cO)$/$\Borel(\R)$-measurable, 
and the fact that for all 
	$t\in [0,T]$, 
	$x\in\cO$ 
it holds that 
	$X^{t,x} \colon [t,T]\times\Omega\to\cO$ 
is  $(\Borel([t,T])\otimes\cF)$/$\Borel(\cO)$-measurable ensure that for all 
	$t\in [0,T]$, 
	$x\in\cO$ 
it holds that
	\begin{equation}
	[t,T] \times \Omega 
	\ni (s,\omega)  
	\mapsto 
	\left| 
	f\big( 
	s, X^{t,x}_{s}(\omega),
	v(s, X^{t,x}_{s}(\omega))
	\big) 
	- 
	f\big( 
	s, X^{t,x}_{s}(\omega),
	w(s, X^{t,x}_{s}(\omega))
	\big) 
	\right|\in\R
	\end{equation}
is $(\Borel([t,T]) \otimes \mathcal{F})$/$\Borel(\R)$-measurable. Next observe that the hypothesis that for all 
	$t\in [0,T]$, 
	$s\in [t,T]$, 
	$x\in \cO$ 
it holds that 
	$\EXP{V(s,X^{t,x}_s)} \leq V(t,x)$, 
the hypothesis that for all 
	$t\in [0,T]$, 
	$x\in \cO$, 
	$a,b\in\R$ 
it holds that 
	$
	|f(t,x,a)-f(t,x,b)| \leq L|a-b|
	$, 
and Fubini's theorem ensure that for all
	$\lambda\in (0,\infty)$, 
	$t\in [0,T]$, 
	$x\in\cO$
it holds that 
	\begin{equation}
	\begin{split}
	& 
	\Exp{ 
		\int_t^T \left| 
		f\big(s,X^{t,x}_{s},v(s,X^{t,x}_{s})\big) 
		- 
		f\big(s,X^{t,x}_{s},w(s,X^{t,x}_{s})\big) 
		\right| \,ds
	}
	\\
	& 
	\leq 
	\Exp{ 
		\int_t^T 
		L \left| v(s,X^{t,x}_{s}) - w(s,X^{t,x}_{s})\right| \,ds 
	}
	\\
	& 
	= 
	L 
	\int_t^T \Exp{\frac{e^{\lambda s}|v(s,X^{t,x}_{s})-w(s,X^{t,x}_{s})|}{V(s,X^{t,x}_{s})}V(s,X^{t,x}_{s})} e^{-\lambda s}\,ds
	\\
	& 
	\leq 
	L
	\int_t^T 
	\left[ 
	\sup_{r\in [0,T]}
	\sup_{y\in\cO} 
	\left( 
	\frac{e^{\lambda r}|v(r,y)-w(r,y)|}{V(r,y)}
	\right) 
	\right]
	\Exp{V(s,X^{t,x}_{s})} e^{-\lambda s}\,ds
	\\
	& 
	\leq 
	L \left[ 
	\sup_{r\in [0,T]}
	\sup_{y\in\cO} 
	\left( 
	\frac{e^{\lambda r}|v(r,y)-w(r,y)|}{V(r,y)}
	\right) 
	\right]
	V(t,x)
	\int_t^T 
	e^{-\lambda s}
	\,ds 
	\\
	& 
	\leq \frac{L}{\lambda} 
	\left[ 
	\sup_{r\in [0,T]}
	\sup_{y\in\cO} 
	\left( 
	\frac{e^{\lambda r}|v(r,y)-w(r,y)|}{V(r,y)}
	\right) 
	\right]
	e^{-\lambda t} V(t,x)
	. 
	\end{split}
	\end{equation}
This establishes \eqref{stochastic_fixed_point_measurable_setting_lipschitz_property:claim}. The proof of \cref{stochastic_fixed_point_measurable_setting_lipschitz_property} 
	is thus completed.
\end{proof}

\subsection{Existence and uniqueness properties for solutions of SFPEs}
\label{subsec:abstract_existence_uniqueness}

Combining Banach's fixed point theorem with \cref{stochastic_fixed_point_semilinear_continuity,stochastic_fixed_point_measurable_setting_lipschitz_property} allows to conclude the main result of this section, \cref{abstract_existence} below. 

\begin{theorem}
	\label{abstract_existence}
Let 
	$d\in\N$, 
	$L,T\in (0,\infty)$, 
let 
	$\norm{\cdot}\colon\R^d \to [0,\infty)$ be a norm on $\R^d$, 
let 
	$\cO\subseteq \R^d$ 
	be a non-empty open set,
for every
	$r\in (0,\infty)$ 
	let 
	$O_r \subseteq \cO$ 
	satisfy 
	$O_r 
	= 
	\{x\in\cO\colon \norm{x} \leq r
	~\text{and}~   \{
	y\in\R^d\colon \norm{y-x} < \nicefrac{1}{r}
	\}
	\subseteq \cO\}
	$, 
let 
	$(\Omega,\mathcal{F},\P)$ 
	be a probability space, 
for every 
	$t\in [0,T]$, 
	$x\in \cO$ 
let 
	$
	X^{t,x}
	=
	(X^{t,x}_{s})_{s\in [t,T]}
	\colon [t,T]\times\Omega \to \cO
	$ 
	be $(\Borel([t,T])\otimes\cF)$/$\Borel(\cO)$-measurable, 
assume for all 
	$\varepsilon \in (0,\infty)$, 
	$s\in [0,T]$ 
and all 
	$(t_n,x_n)\in [0,T] \times \cO$, $n\in\N_0$,  
with 
	$\limsup_{n\to\infty} [ |t_n-t_0| + \norm{x_n-x_0}] = 0$
that 
	$\limsup_{n\to\infty} [\P(\Norm{X^{t_n,x_n}_{\max\{s,t_n\}}-X^{t_0,x_0}_{\max\{s,t_0\}}}\geq\varepsilon)] = 0$,
let 
	$f\in C([0,T]\times\cO\times\R,\R)$, 
	$g\in C(\cO,\R)$, 
	$V\in C([0,T]\times\cO,(0,\infty))$ 
satisfy for all 
	$t\in [0,T]$, 
	$s\in [t,T]$, 
	$x\in\cO$, 
	$v,w\in\R$
that 
	$
	\EXP{V(s,X^{t,x}_{s})} \leq V(t,x)
	$
and 
	$
	|f(t,x,v)-f(t,x,w)|
	\leq 
	L |v-w|
	$, 
and assume that 
	$
	\inf_{r\in (0,\infty)}
	[ 
	\sup_{t\in [0,T]}
	\sup_{x\in \cO\setminus O_r} 
	( 
	\frac{|f(t,x,0)|}{V(t,x)}
	+
	\frac{|g(x)|}{V(T,x)}
	) 
	] 
	= 0
	$
	and 
	$
	\sup_{r\in (0,\infty)} 
	[
	\inf_{t\in [0,T]}
	\inf_{x\in \cO\setminus O_r} 
	V(t,x)
	] = \infty
	$. 
Then there exists a unique 
	$u\in C([0,T]\times\cO,\R)$ 
such that 
\begin{enumerate}[(i)]
\item it holds that 
	\begin{equation}
	\limsup_{r\to\infty} 
	\left[ 
	\sup_{t\in [0,T]}
	\sup_{x\in \cO\setminus O_r} 
	\left( 
	\frac{|u(t,x)|}{V(t,x)}
	\right) 
	\right] 
	= 0
	\end{equation}
and 
\item it holds for all 
	$t\in [0,T]$, 
	$x\in\cO$
that
	\begin{equation}
	u(t,x) 
	= 
	\Exp{ 
		g(X^{t,x}_T) 
		+ 
		\int_t^T 
		f\big( s, X^{t,x}_{s}, u(s,X^{t,x}_{s})\big) \,ds 
	}\!. 
	\end{equation} 
\end{enumerate}
\end{theorem}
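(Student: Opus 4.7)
The plan is to apply Banach's fixed point theorem to the operator
\begin{equation*}
\Phi(u)(t,x) := \Exp{g(X^{t,x}_T) + \int_t^T f\big(s, X^{t,x}_{s}, u(s, X^{t,x}_{s})\big)\,ds}
\end{equation*}
on a weighted Banach space of continuous functions. First, fix $\lambda \in (L,\infty)$, let $\mathcal{X}$ denote the set of all $u \in C([0,T]\times\cO,\R)$ which satisfy $\limsup_{r\to\infty}[\sup_{t\in [0,T]}\sup_{x\in\cO\setminus O_r}(|u(t,x)|/V(t,x))] = 0$, and equip $\mathcal{X}$ with the weighted norm $\Norm{u}_\lambda := \sup_{t\in[0,T]}\sup_{x\in\cO}(e^{\lambda t}|u(t,x)|/V(t,x))$. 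Since each $O_r$ is a closed bounded (hence compact) subset of $\cO$ with $\bigcup_{r\in (0,\infty)} O_r = \cO$, and $V>0$ is continuous, this norm is finite on all of $\mathcal{X}$. I would then verify that $(\mathcal{X},\Norm{\cdot}_\lambda)$ is complete: the pointwise bound $|u-v|/V \leq \Norm{u-v}_\lambda$ turns Cauchyness into uniform Cauchyness of the scaled sequence $u_n/V$, whose uniform limit $\phi$ is bounded; setting $u := V\phi$ gives continuity by uniform convergence on compact subsets of $[0,T]\times\cO$, and a standard $\varepsilon/2$-argument transfers the decay property from the $u_n$ to $u$.

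Next, I would show $\Phi(\mathcal{X}) \subseteq \mathcal{X}$ by invoking \cref{stochastic_fixed_point_semilinear_continuity}: for any $u \in \mathcal{X}$, the hypotheses $\inf_r\sup_{\cO\setminus O_r}(|f(\cdot,\cdot,0)|/V) = 0$, $\inf_r\sup_{\cO\setminus O_r}(|g|/V(T,\cdot)) = 0$, and the decay of $|u|/V$ combine (since each summand individually tends to $0$ outside sufficiently large $O_r$) to yield the asymptotic condition on the data demanded by that corollary; Items~(ii) and (iii) then deliver continuity of $\Phi(u)$ and the decay $\lim_{r\to\infty}[\sup_{\cO\setminus O_r}(|\Phi(u)|/V)] = 0$ (the latter precisely using the standing assumption $\sup_r[\inf_{\cO\setminus O_r} V] = \infty$). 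For the contraction, \cref{stochastic_fixed_point_measurable_setting_lipschitz_property} applied to $u,w \in \mathcal{X}$ directly yields
\begin{equation*}
|\Phi(u)(t,x)-\Phi(w)(t,x)| \leq \tfrac{L}{\lambda}\,e^{-\lambda t}\,V(t,x)\,\Norm{u-w}_\lambda,
\end{equation*}
so that $\Norm{\Phi(u)-\Phi(w)}_\lambda \leq (L/\lambda)\Norm{u-w}_\lambda$ with $L/\lambda < 1$. Banach's fixed point theorem then supplies a unique $u \in \mathcal{X}$ with $\Phi(u) = u$, and since any $u \in C([0,T]\times\cO,\R)$ satisfying conditions~(i) and (ii) of the theorem automatically lies in $\mathcal{X}$ and is a fixed point of $\Phi$, the uniqueness in $\mathcal{X}$ translates directly into the uniqueness asserted in the statement.

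The main obstacle I anticipate is not conceptual but rather a bookkeeping step: verifying that the combined three-term decay hypothesis of \cref{stochastic_fixed_point_semilinear_continuity} is implied by the three separate decay statements we have available (the two on $f(\cdot,\cdot,0)$ and $g$ from the theorem's assumptions, together with the decay built into membership in $\mathcal{X}$). Everything else --- completeness of $\mathcal{X}$, the contraction estimate, and the Banach fixed point argument itself --- is a direct consequence of the preceding lemmas and a standard application of functional-analytic technique.
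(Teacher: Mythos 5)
Your proposal is correct and follows essentially the same route as the paper's proof: you construct the same weighted Banach space (the paper calls it $\cV$), verify completeness by passing through the unweighted bounded-continuous space via $u \mapsto u/V$, establish well-definedness of $\Phi$ via \cref{stochastic_fixed_point_semilinear_continuity}, obtain the contraction from \cref{stochastic_fixed_point_measurable_setting_lipschitz_property}, and conclude via Banach's fixed point theorem. The only cosmetic differences are that you fix a single $\lambda \in (L,\infty)$ from the outset rather than first proving completeness for $\lambda = 0$ and then transferring by norm-equivalence, and you pick $\lambda > L$ rather than $\lambda \geq 2L$; both choices give a valid contraction factor.
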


\begin{proof}[Proof of \cref{abstract_existence}]
 Throughout this proof let 
	$\cV$ 
 be the set given by
	\begin{equation}
	\cV = \left\{
	u\in C([0,T]\times\cO,\R)\colon 
	\limsup_{r\to\infty} 
	\left[ \sup_{t\in [0,T]}\sup_{x\in\cO\setminus O_r} \left( \frac{|u(t,x)|}{V(t,x)} \right) \right] = 0
	\right\}\!,  
	\end{equation}
let 
 	$\cW_1$ and $\cW_2$ 
be the sets given by 
	\begin{equation}
	\cW_1 = \left\{ u \in C([0,T]\times\cO,\R)\colon 
	\sup_{t\in [0,T]} \sup_{x\in\cO} |u(t,x)|  < \infty \right\}  
	\end{equation} 
and 
	\begin{equation} 
	\cW_2 = \left\{ u \in C([0,T]\times\cO,\R)\colon 
	\limsup_{r\to\infty} \left[ \sup_{t\in [0,T]} \sup_{x\in\cO \setminus O_r} |u(t,x)| \right] = 0 \right\}\!, 
	\end{equation} 
let 
 	$\norm{\cdot}_{\lambda}\colon \cV \to [0,\infty)$, $\lambda\in \R$, 
satisfy for every 
 	$\lambda\in \R$, $v\in\cV$ 
that 
	\begin{equation}
	\|v\|_{\lambda} 
	= 
	\sup_{t\in [0,T]}
	\sup_{x\in \cO} 
	\left( 
	\frac{e^{\lambda t}|v(t,x)|}{V(t,x)}
	\right)  
	\end{equation}
(see Item~\eqref{convergence_result:item0a} of \cref{convergence_result}), 
and let 
	$\norm{\cdot}_{\cW_i}\colon\cW_i \to [0,\infty) $, $i\in\{1,2\}$, 
satisfy for every 
	$i\in\{1,2\}$, 
	$w\in \cW_i$ 
that 
	\begin{equation} 
	 \norm{w}_{\cW_i} = \sup_{t\in [0,T]} \sup_{x\in\cO} |w(t,x)|. 
	\end{equation}
Recall that 
	$(\cW_1,\norm{\cdot}_{\cW_1})$ is an $\R$-Banach space. 
Combining this with the fact that $\cW_2$ is a closed subset of $(\cW_1,\norm{\cdot}_{\cW_1})$ (see \cref{convergence_result}) implies that $(\cW_2,\norm{\cdot}_{\cW_2})$ is an $\R$-Banach space.  
Moreover, observe that 
	$(\cV,\norm{\cdot}_{\lambda})$, $\lambda\in\R$, are normed $\R$-vector spaces. 
In the next step we show that 
	$(\cV,\norm{\cdot}_{0})$
is complete. For this let 
	$v_n\in\cV$, $n\in\N$,
satisfy
	$\limsup_{n\to\infty} [\sup_{m\geq n} \norm{v_n-v_m}_{0}] = 0$. 
This implies that 
	$\frac{v_n}{V}\colon [0,T]\times\cO \to \R$, $n\in\N$, 
is a Cauchy sequence in $(\cW_2,\norm{\cdot}_{\cW_2})$. Thus, there exists $\phi\in \cW_2$ which satisfies that 
	$
	\limsup_{n\to\infty} 
	[
	\sup_{t\in [0,T]} \sup_{x\in\cO} 
    | \frac{v_n(t,x)}{V(t,x)} - \phi(t,x) | 
    ]
    = 
    0$. 
Hence, we obtain that 
	$\phi V = ([0,T]\times\cO \ni (t,x) \mapsto \phi(t,x) V(t,x) \in \R) \in \cV$ 
and 
	$\limsup_{n\to\infty} 
	\norm{v_n - \phi V}_0 = 0$. 
This demonstrates that 
	$(\cV,\norm{\cdot}_{0})$ is an $\R$-Banach space. 
Combining this with the fact that for every 
	$\nu\in\R$, 
	$\lambda\in [\nu,\infty)$, 
	$v\in \cV$ 
it holds that 
	$\norm{v}_{\nu} \leq \norm{v}_{\lambda} \leq e^{(\lambda-\nu) T}\norm{v}_{\nu} $ 
shows that for every 
	$\lambda\in\R$ 
it holds that 
	$(\cV,\norm{\cdot}_{\lambda})$ is an $\R$-Banach space.
Next note that \cref{stochastic_fixed_point_semilinear_continuity} yields that there exists a unique  $\Phi\colon\cV\to\cV$ which satisfies for all 
	$t\in [0,T]$, 
	$x\in\cO$, 
	$v\in\cV$ 
that 
	\begin{equation}
	\left[\Phi(v)\right]\!(t,x) 
	= 
	\Exp{ 
		g(X^{t,x}_T) 
		+ 
		\int_t^{T} 
		f\big(s,X^{t,x}_{s},v(s,X^{t,x}_{s})\big)\,ds
	}\!.
	\end{equation}
Moreover, observe that \cref{stochastic_fixed_point_measurable_setting_lipschitz_property} ensures for all 
	$\lambda\in (0,\infty)$, 
	$v,w\in\cV$ 
that 
	\begin{equation}
	\| \Phi(v) - \Phi(w) \|_{\lambda} 
	\leq 
	\frac{L}{\lambda} 
	\| v - w \|_{\lambda}.  
	\end{equation}
Hence, we obtain for all 
	$\lambda \in [2L,\infty)$, 
	$v,w\in \cV$ 
that 
	\begin{equation}
	\| \Phi(v) - \Phi(w) \|_{\lambda} 
	\leq 
	\frac12 
	\| v - w \|_{\lambda}. 
	\end{equation}
Banach's fixed point theorem therefore demonstrates that there exists a unique $u\in\cV$ which satisfies $\Phi(u)=u$. 
The proof of \cref{abstract_existence} is thus completed.
\end{proof}

\section{SFPEs associated with stochastic differential equations (SDEs)}
\label{sec:applications}

In this section we apply the abstract existence and uniqueness result which we obtained in the previous section (see \cref{abstract_existence} in \cref{sec:abstract_study} above) to certain SDEs (see \cref{subsec:SFEs_and_SDEs} below). In \crefrange{subsec:moment_estimates}{subsec:continuous_dependence} we present, for the reader's convenience and for the sake of completeness, some elementary and essentially well-known results on SDEs. These results are employed to show that the hypotheses of \cref{abstract_existence} are indeed satisfied in the setting of \cref{existence_sde_setting} (cf.\ \cref{integral_estimate_through_lyapunov_function,stochastic_continuity_lemma}). 

\subsection{A priori estimates for solutions of SDEs} 
\label{subsec:moment_estimates}

The following well-known result, 
\cref{integral_estimate_through_lyapunov_function} below (cf., for example, Gy\"ongy \& Krylov~\cite{GyoengyKrylov1995_existenceStrong}), 
can be seen as an extension of moment bounds for solutions 
of SDEs in the presence 
of a Lyapunov function or, in other words, a non-negative supersolution of the corresponding Kolmogorov PDE.  

\begin{lemma}
\label{integral_estimate_through_lyapunov_function}
Let 
	$d,m\in\N$, 
	$T \in (0,\infty)$, 
let 
	$\cO \subseteq  \R^d$ be an open set, 
let 
	$\langle\cdot,\cdot\rangle\colon \R^d \times \R^d \to \R$ be the standard scalar product on $\R^d$, 
let
	$
	\mu
	\in C([0,T]\times\cO,\R^d)
	$, 
	$
	\sigma\in C([0,T]\times\cO,\R^{d\times m})
	$,
	$V\in C^{1,2}([0,T]\times \cO, [0,\infty))$
satisfy for all 
	$t \in [0,T]$, 
	$x\in\cO$ 
that
	\begin{equation}
	\label{integral_estimate_through_lyapunov_function:ass1}
	(\tfrac{\partial V}{\partial t})(t,x) 
	+ 
	\tfrac12 
	\operatorname{Trace}\!\big( 
	\sigma(t,x)[\sigma(t,x)]^{*} 
	(\operatorname{Hess}_x V)(t,x)
	\big) 
	+ 
	\langle 
	\mu(t,x),(\nabla_x V)(t,x) \rangle
	\leq 0,
	\end{equation}
let 
	$ ( \Omega, \mathcal{F}, \P, (\mathbb{F}_t )_{t\in [0,T]}) $ 
be a filtered probability space which satisfies the usual conditions, 
let 
	$ W \colon [0,T] \times \Omega \to \R^m $ 
be a standard $(\mathbb{F}_t)_{t\in [0,T]}$-Brownian motion, 
let 
	$\tau\colon \Omega\to [0,T]$ be an 
	$(\mathbb{F}_t)_{t\in [0,T]}$-stopping time, 
and let 
	$X
	\colon 
	[0,T] \times \Omega \to \cO$ 
be an $(\mathbb{F}_t)_{t\in [0,T]}$-adapted stochastic process with continuous sample paths which satisfies that for all 
	$t\in [0,T]$ 
it holds $\P$-a.s.\,that
	\begin{equation}
	\label{integral_estimate_through_lyapunov_function:ass2}
	X_t 
	= 
	X_{0}
	+ 
	\int_{0}^t \mu(s,X_s)\,ds 
	+ 
	\int_{0}^t \sigma(s,X_s)\,dW_s. 
	\end{equation}
Then it holds that 
	\begin{equation}
	\Exp{V(\tau,X_{\tau})} 
	\leq 
	\Exp{V(0,X_{0})}
	. 
	\end{equation}
\end{lemma}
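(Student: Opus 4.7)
The plan is to use a standard Itô–Lyapunov argument combined with a localization. Since $V\in C^{1,2}([0,T]\times\cO,[0,\infty))$ and $X$ is a continuous semimartingale taking values in $\cO$, I would first apply Itô's formula to the process $[0,T]\ni t\mapsto V(t,X_t)\in[0,\infty)$ using \eqref{integral_estimate_through_lyapunov_function:ass2} to obtain, $\P$-a.s.~for all $t\in [0,T]$, the identity
\begin{equation*}
V(t,X_t) = V(0,X_0) + \int_0^t \mathcal{L}V(s,X_s)\,ds + \int_0^t \langle(\nabla_x V)(s,X_s),\sigma(s,X_s)\,dW_s\rangle,
\end{equation*}
where $\mathcal{L}V(s,x)$ denotes the left-hand side of \eqref{integral_estimate_through_lyapunov_function:ass1}. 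By hypothesis $\mathcal{L}V\leq 0$, so the Lebesgue integral above is nonpositive $\P$-a.s.

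The next step handles the stochastic integral, which a priori is only a local martingale. For every $n\in\N$ I would introduce the $(\F_t)_{t\in [0,T]}$-stopping time
\begin{equation*}
\tau_n = \inf\!\big\{t\in [0,T]\colon \norm{X_t}\geq n~\text{or}~\inf_{y\in\R^d\setminus\cO}\norm{X_t-y}\leq \tfrac{1}{n}\big\}\wedge T,
\end{equation*}
with the convention $\inf\emptyset = T$. Because the sample paths of $X$ are continuous and take values in the open set $\cO$, for $\P$-a.a.~$\omega\in\Omega$ the image $\{X_t(\omega)\colon t\in [0,T]\}$ is a compact subset of $\cO$, which guarantees $\tau_n\nearrow T$ as $n\to\infty$ $\P$-a.s. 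Moreover, on $\{t\leq\tau_n\}$ the process $X_t$ lies in the compact set $K_n\subseteq\cO$ on which $\nabla_x V$ and $\sigma$ are bounded. Therefore the stopped stochastic integral $\int_0^{\cdot\wedge\tau_n}\langle\nabla_x V(s,X_s),\sigma(s,X_s)\,dW_s\rangle$ is a true $L^2$-martingale with mean zero. Applying Itô's formula at $\tau\wedge\tau_n$, taking expectation, and using $\mathcal{L}V\leq 0$ yields
\begin{equation*}
\Exp{V(\tau\wedge\tau_n,X_{\tau\wedge\tau_n})}\leq \Exp{V(0,X_0)}.
\end{equation*}

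Finally I would pass to the limit $n\to\infty$. The continuity of $V$ and of the sample paths of $X$, together with $\tau\wedge\tau_n\to\tau$ $\P$-a.s.\ and $X_{\tau\wedge\tau_n}\to X_\tau$ $\P$-a.s., imply that $V(\tau\wedge\tau_n,X_{\tau\wedge\tau_n})\to V(\tau,X_\tau)$ $\P$-a.s. The nonnegativity $V\geq 0$ then permits an application of Fatou's lemma, giving
\begin{equation*}
\Exp{V(\tau,X_\tau)} \leq \liminf_{n\to\infty}\Exp{V(\tau\wedge\tau_n,X_{\tau\wedge\tau_n})} \leq \Exp{V(0,X_0)},
\end{equation*}
which is the claim. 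The only genuinely delicate point is the localization, i.e., verifying that $\tau_n\nearrow T$ $\P$-a.s.~and that the stopped stochastic integral is a martingale, both of which rest on the fact that continuous $\cO$-valued paths on the compact interval $[0,T]$ are compactly contained in $\cO$; everything else is routine.
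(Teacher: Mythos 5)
Your proposal is correct and follows essentially the same route as the paper: Itô's formula applied to $V(t,X_t)$, localization via stopping times that keep $X$ inside a compact subset of $\cO$ (so that $\nabla_x V$ and $\sigma$ are bounded and the stopped stochastic integral is a genuine mean-zero martingale), the $\P$-a.s.\ convergence $\tau_n\nearrow T$ from compact containment of continuous $\cO$-valued paths, and finally Fatou's lemma using $V\geq 0$. The paper's stopping times $\rho_n$ are defined through the same truncation sets $O_n$ your $\tau_n$ encode, so the two arguments are the same modulo notation.
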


\begin{proof}[Proof of \cref{integral_estimate_through_lyapunov_function}]
Throughout this proof let 
	$\norm{\cdot}\colon \R^d \to [0,\infty)$ 
be the standard norm on $\R^d$, 
let 
	$\HSnorm{\cdot}\colon \R^{d\times m} \to [0,\infty)$ 
be the Frobenius norm on $\R^{d\times m}$,  
for every 
	$ r \in (0,\infty) $ 
let 
	$ O_r \subseteq \cO $ 
satisfy 
	$ O_r = \{ x\in\cO \colon \norm{x} \leq r~\text{and}~\{y \in \R^d \colon \norm{y-x} < \nicefrac{1}{r} \} \subseteq \cO \} $, 
let 
	$Y\colon [0,T]\times\Omega \to \R$ 
be an $(\F_t)_{t\in [0,T]}$-adapted stochastic  process with continuous sample paths which satisfies that for all 
	$t\in [0,T]$ 
it holds $\P$-a.s.~that 
	\begin{equation}
	\label{integral_estimate_through_lyapunov_function:definition_of_Y}
	Y_t = \int_{0}^t 
	\left\langle (\nabla_x V)(s,X_s), 
	\sigma(s,X_s)\,dW_s 
	\right\rangle, 
	\end{equation}
and let 
	$\rho_n\colon \Omega\to [0,T]$, $n\in\N$, 
be the
	$(\mathbb{F}_t)_{t\in [0,T]}$-stopping times 
which satisfy for all 
	$n\in\N$ 
that 
	\begin{equation}
	\label{integral_estimate_through_lyapunov_function:definition_stopping_time}
	\rho_n 
	= 
	\inf\!\left( \left\{t\in [0,T]\colon 
	X_t \notin O_n
	\right\} \cup \{T\} \right). 
	\end{equation}
Observe that the fact that $X$ has continuous sample paths and the fact 
that $[0,T]$ is compact ensure that for all 
	$\omega\in\Omega$ 
it holds that 
	$\{X_t(\omega)\colon t\in [0,T]\}$ 
is compact. Combining this with the fact that 
	$\R^d \ni x \mapsto \norm{x}\in[0,\infty)$ 
and 
	$\R^d \ni x \mapsto \inf(\{1\}\cup\{\norm{x-y}\colon y\in\R^d\setminus\cO\}) \in [0,1] $
are continuous implies that for every 
	$\omega \in \Omega$ 
there exist
	$\varepsilon,r \in (0,\infty)$ 
such that for all 
	$t\in [0,T]$ 
it holds that 
	$
	\{y\in \R^d\colon \Norm{y-X_t(\omega)}<\varepsilon \} \subseteq \cO 
	$
and 
	$\sup_{t\in [0,T]} \norm{X_t(\omega)} \leq r$. 
Combining this with the fact that for all 
	$\varepsilon,r \in (0,\infty)$ 
there exists 
	$n \in \N$ 
such that for all 
	$k \in \N$ 
with 
	$k \geq n$ 
it holds that  
	$r\leq k$ 
and 
	$\nicefrac{1}{k} \leq \varepsilon $
implies that for every 
	$\omega\in\Omega$ 
there exists 
	$n \in \N$ 
such that for all 
	$k \in \N$ 
with 
	$k \geq n$ 
it holds that  
	$\rho_k(\omega)=T$. 
Next note that the assumption that 
	$\nabla_x V\colon [0,T]\times\cO \to \R^d$ 
and 
	$\sigma\colon [0,T]\times\cO \to \R^{d\times m}$ 
are continuous implies that for all 
	$n\in\N$ 
it holds that 
	\begin{equation} 
	\sup \Big(
	\left\{ 
	\norm{(\nabla_x V)(t,x)} + \HSnorm{\sigma(t,x)} 
	\colon 
	t \in [0,T], 
	x \in O_n
	\right\} \cup \{ 0 \}
	\Big)
	< \infty. 
	\end{equation} 
This yields for all 
	$n\in\N$ 
that 
	\begin{equation} 
	\begin{split}
	 & 
	 \Exp{\int_0^T 
	 	\norm{(\nabla_x V)(s,X_s)}^2 \HSNorm{\sigma(s,X_s)\mathbbm{1}_{\{0<s\leq\min\{\tau,\rho_n\}\}}}^2	 	
	 \,ds} 
 	 \\
 	 & \qquad \leq 
 	 T \left[ 
 	 \sup \Big(
 	 \left\{ 
 	 \norm{(\nabla_x V)(t,x)} + \HSnorm{\sigma(t,x)} 
 	 \colon 
 	 t \in [0,T], 
 	 x \in O_n
 	 \right\} \cup \{ 0 \}
 	 \Big)\right]^4 
 	 < \infty.
 	\end{split}  
	\end{equation} 
Combining \eqref{integral_estimate_through_lyapunov_function:definition_of_Y} and \eqref{integral_estimate_through_lyapunov_function:definition_stopping_time} hence assures for all 
	$n\in\N$ 
that 
	\begin{equation}
	\label{integral_estimate_through_lyapunov_function:zero_expectation}
	\Exp{Y_{\min\{\tau,\rho_n\}}} 
	= 
	\Exp{ 
		\int_0^T \langle (\nabla_x V)(s,X_s), \sigma(s,X_s) 
		\mathbbm{1}_{\{0<s\leq\min\{\tau,\rho_n\}\}}\,dW_s
		\rangle
	}
	= 
	0 . 
	\end{equation}
Next note that It\^o's formula ensures that for all 
	$t\in [0,T]$ 
it holds $\P$-a.s.~that
	\begin{equation}
	\begin{split}
	& 
	V(t,X_t) 
	\\
	& =
	V(0,X_{0}) 
	+ 
	\int_{0}^t 
	(\tfrac{\partial V}{\partial t})(s,X_s) \,ds
	+ 
	\int_{0}^{t}
	\langle 
	(\nabla_x V)(s,X_{s}) ,
	\sigma(s,X_{s})\,dW_s
	\rangle
	\\
	& \quad
	+ \int_{0}^t
	\left[ 
	\langle 
	(\nabla_x V)(s,X_s), 
	\mu(s,X_{s})
	\rangle
	+
	\tfrac12 
	\operatorname{Trace}\!\left(
	\sigma(s,X_{s})
	\!\left[
	\sigma(s,X_{s})\right]^{*}
	\!(\operatorname{Hess}_x V)(s,X_{s})
	\right)
	\right]\!
	\,ds
	\\
	& 
	= 
	V(0,X_{0}) 
	+ 
	\int_{0}^t 
	(\tfrac{\partial V}{\partial t})(s,X_s) \,ds
	+ 
	Y_t
	\\
	& \quad
	+ \int_{0}^t
	\left[ 
	\langle 
	\mu(s,X_{s}), 
	(\nabla_x V)(s,X_s)
	\rangle
	+
	\tfrac12 
	\operatorname{Trace}\!\left(
	\sigma(s,X_{s})
	\!\left[
	\sigma(s,X_{s})\right]^{*}
	\!(\operatorname{Hess}_x V)(s,X_{s})
	\right)
	\right]\!
	\,ds
	.  
	\end{split}
	\end{equation}
This and the fact that $X$ has continuous sample paths imply that for all 
	$n\in\N$ 
it holds $\P$-a.s.~that 
	\begin{equation}
	\begin{split}
	& V(\min\{\tau,\rho_n\}, X_{\min\{\tau,\rho_n\}}) 
	\\
	& 
	=
	V(0, X_{0}) 
	+ 
	Y_{\min\{\tau,\rho_n\}}
	+ 
	\int_{0}^{\min\{\tau,\rho_n\}} 
	(\tfrac{\partial V}{\partial t})(s,X_s) \,ds 
	+  
	\int_{0}^{\min\{\tau,\rho_n\}}
	\langle 
	\mu(s,X_{s}), 
	(\nabla_x V)(s,X_s)
	\rangle
	\,ds
	\\
	& \quad 
	+ \int_{0}^{\min\{\tau,\rho_n\}}
	\tfrac12 
	\operatorname{Trace}\!\left(
	\sigma(s,X_{s})
	\!\left[
	\sigma(s,X_{s})\right]^{*}
	\!(\operatorname{Hess}_x V)(s,X_{s})
	\right)\!
	\,ds. 
	\end{split} 
	\end{equation}
This and \eqref{integral_estimate_through_lyapunov_function:ass1} guarantee that for all 
	$ n \in \N$ 
it holds $\P$-a.s.\,that 
	\begin{equation}
	\label{integral_estimate_through_lyapunov_function:ae_inequality}
	\begin{split}
	V(\min\{\tau,\rho_n\}, X_{\min\{\tau,\rho_n\}})
	& \leq 
	V(0,X_{0})  
	+ 
	Y_{\min\{\tau,\rho_n\}}. 
	\end{split}
	\end{equation}
Combining this and  \eqref{integral_estimate_through_lyapunov_function:zero_expectation} yields for all 
	$n\in\N$ 
that
	\begin{equation}
	\begin{split}
	& \Exp{ 
		V(
		\min\{\tau,\rho_n\},
		X_{\min\{\tau,\rho_n\}}
		)
	}
	\leq 
	\Exp{V(0,X_{0})}. 
	\end{split}
	\end{equation}
Fatou's lemma hence ensures that 
	\begin{equation}
	\begin{split}
	\Exp{V(\tau,X_{\tau})} 
	& = 
	\Exp{\liminf_{n\to\infty}  V(\min\{\tau,\rho_n\}, X_{\min\{\tau,\rho_n\}}) }
	\\
	& \leq 
	\liminf_{n\to\infty} 
	\Exp{V(\min\{\tau,\rho_n\}, X_{\min\{\tau,\rho_n\}})} 
	\leq 
	\Exp{V(0, X_{0})} .
	\end{split}
	\end{equation}
The proof of \cref{integral_estimate_through_lyapunov_function} is thus completed.
\end{proof}    

The next elementary result, \cref{moment_estimate_special_lyapunov} below,  provides a way to construct from a supersolution of a suitable elliptic PDE a  supersolution of a Kolmogorov PDE (cf.~\cref{integral_estimate_through_lyapunov_function}  above). Later we will employ \cref{moment_estimate_special_lyapunov} to infer \cref{existence_of_fixpoint_x_dependence_full_space_lyapunov} from \cref{existence_sde_setting}. 

\begin{lemma}
	\label{moment_estimate_special_lyapunov}
Let 
	$d,m\in\N$, 
	$T\in (0,\infty)$, 
	$\rho \in \R$, 
let 
	$\langle\cdot,\cdot\rangle\colon \R^d \times \R^d \to \R$ be the standard scalar product on $\R^d$, 
let 
	$\cO \subseteq  \R^d$ 
	be a non-empty open set, 
let
	$\mu\in C([0,T]\times\cO,\R^d)$, 
	$\sigma\in C([0,T]\times\cO,\R^{d\times m})$, 
	$V\in C^2(\cO,(0,\infty))$ 
satisfy for all 
	$t\in [0,T]$, 
	$x\in\cO$ 
that
	\begin{equation}
	\label{moment_estimate_special_lyapunov:ass1}
	\tfrac12 
	\operatorname{Trace}\!\big( 
	\sigma(t,x)[\sigma(t,x)]^{*} 
	(\operatorname{Hess} V)(x)
	\big) 
	+ 
	\langle 
	\mu(t,x),(\nabla V)(x)
	\rangle
	\leq \rho V(x), 
	\end{equation}
and let 
	$
	\mathbb{V}
	\colon 
	[0,T]\times\cO
	\to 
	(0,\infty)
	$
satisfy for all 
	$t\in [0,T]$, 
	$x\in \cO$
that 
	\begin{equation}
	\mathbb{V}(t,x) = e^{-\rho t} V(x) . 
	\end{equation} 
Then  
	\begin{enumerate}[(i)]
		\item
		\label{moment_estimate_special_lyapunov:item1}
		it holds that $\mathbb{V}\in C^2([0,T]\times\cO,(0,\infty))$ and
		\item 
		\label{moment_estimate_special_lyapunov:item2}
		it holds for all 
		$t\in [0,T]$, 
		$x\in\cO$ 
		that
		\begin{equation}
		(\tfrac{\partial \mathbb{V}}{\partial t})(t,x) 
		+ 
		\tfrac12\operatorname{Trace}\!\big( 
		\sigma(t,x)[\sigma(t,x)]^{*}
		(\operatorname{Hess}_x \mathbb{V})(t,x)
		\big) 
		+ 
		\langle 
		\mu(t,x), 
		(\nabla_x \mathbb{V})(t,x) 
		\rangle
		\leq 0 . 
		\end{equation}
	\end{enumerate}
\end{lemma}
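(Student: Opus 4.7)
The plan is to verify both items by direct computation, using that $\mathbb{V}(t,x) = e^{-\rho t}V(x)$ is a product of a smooth function of $t$ alone and a $C^2$ function of $x$ alone.

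For Item~\eqref{moment_estimate_special_lyapunov:item1}, I would observe that $[0,T]\ni t\mapsto e^{-\rho t}\in(0,\infty)$ is infinitely differentiable and that, by hypothesis, $\cO\ni x\mapsto V(x)\in(0,\infty)$ is in $C^2$. The standard product rule for partial derivatives then gives that $\mathbb{V}\in C^2([0,T]\times\cO,(0,\infty))$, together with the explicit identities
\begin{equation}
(\tfrac{\partial \mathbb{V}}{\partial t})(t,x) = -\rho e^{-\rho t} V(x),\qquad (\nabla_x\mathbb{V})(t,x) = e^{-\rho t}(\nabla V)(x),\qquad (\operatorname{Hess}_x\mathbb{V})(t,x) = e^{-\rho t}(\operatorname{Hess} V)(x)
\end{equation}
for all $t\in[0,T]$, $x\in\cO$.

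For Item~\eqref{moment_estimate_special_lyapunov:item2}, I would plug these identities into the left-hand side of the claimed inequality and factor out the positive scalar $e^{-\rho t}$, which yields for all $t\in[0,T]$, $x\in\cO$ that
\begin{equation}
\begin{split}
& (\tfrac{\partial \mathbb{V}}{\partial t})(t,x) + \tfrac12\operatorname{Trace}\!\big(\sigma(t,x)[\sigma(t,x)]^{*}(\operatorname{Hess}_x\mathbb{V})(t,x)\big) + \langle\mu(t,x),(\nabla_x\mathbb{V})(t,x)\rangle \\
& \qquad = e^{-\rho t}\Big[-\rho V(x) + \tfrac12\operatorname{Trace}\!\big(\sigma(t,x)[\sigma(t,x)]^{*}(\operatorname{Hess} V)(x)\big) + \langle\mu(t,x),(\nabla V)(x)\rangle\Big].
\end{split}
\end{equation}
Applying hypothesis~\eqref{moment_estimate_special_lyapunov:ass1} to the bracket and using $e^{-\rho t}>0$ then bounds the right-hand side above by $e^{-\rho t}[-\rho V(x) + \rho V(x)] = 0$, which is the claim.

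The lemma is a purely mechanical chain-rule verification, so I do not anticipate any real obstacle; the only thing to be careful about is keeping the sign of $-\rho e^{-\rho t}V(x)$ straight when combining it with the $\rho V(x)$ coming from the right-hand side of \eqref{moment_estimate_special_lyapunov:ass1}, and to note that the argument works for any sign of $\rho\in\R$ because $e^{-\rho t}$ is positive regardless.
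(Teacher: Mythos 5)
Your proof is correct and follows essentially the same approach as the paper's: compute $\partial_t\mathbb{V}$, $\nabla_x\mathbb{V}$, and $\operatorname{Hess}_x\mathbb{V}$ by the product/chain rule, factor out $e^{-\rho t}>0$, and apply the hypothesis \eqref{moment_estimate_special_lyapunov:ass1}. The only cosmetic difference is that the paper records the intermediate identities as an enumerated list before combining them, whereas you fold the substitution and cancellation into a single display.
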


\begin{proof}[Proof of \cref{moment_estimate_special_lyapunov}]
First, note that the chain rule and the fact that $V\in C^2(\cO,(0,\infty))$ ensure for all 
	$t\in [0,T]$, 
	$x\in\cO$ 
that 
	\begin{enumerate}[(I)]
		\item
		\label{moment_estimate_special_lyapunov:item_V_twice_diff}
		$\mathbb{V}\in C^2([0,T]\times\cO,\R)$,
		\item 
		\label{moment_estimate_special_lyapunov:item_V_partial_t}
		$(\tfrac{\partial \mathbb{V}}{\partial t})(t,x) 
		= 
		-\rho e^{-\rho t} V(x) 
		= 
		-\rho \mathbb{V}(t,x)$, 
		\item
		\label{moment_estimate_special_lyapunov:item_V_nabla_x}
		$(\nabla_x \mathbb{V})(t,x) 
		= e^{-\rho t}(\nabla V)(x)$, and
		\item 
		\label{moment_estimate_special_lyapunov:item_V_Hess_x}
		$(\operatorname{Hess}_x \mathbb{V})(t,x) 
		= e^{-\rho t}(\operatorname{Hess} V)(x)$. 
	\end{enumerate}
Note that Item~\eqref{moment_estimate_special_lyapunov:item_V_twice_diff} establishes Item~\eqref{moment_estimate_special_lyapunov:item1}. Moreover, combining \eqref{moment_estimate_special_lyapunov:ass1} with  Items~\eqref{moment_estimate_special_lyapunov:item_V_partial_t}--\eqref{moment_estimate_special_lyapunov:item_V_Hess_x} yields for all 
	$t\in [0,T]$, 
	$x\in\cO$ 
that 
	\begin{equation}
	\begin{split}
	& 
	(\tfrac{\partial \mathbb{V}}{\partial t})(t,x) 
	+ 
	\tfrac12
	\operatorname{Trace}\!\big( 
	\sigma(t,x)[\sigma(t,x)]^{*}(\operatorname{Hess}_x \mathbb{V})(t,x)
	\big) 
	+ 
	\langle 
	\mu(t,x),(\nabla_x \mathbb{V})(t,x) 
	\rangle
	\\
	&
	= e^{-\rho t} 
	\left( -\rho V(x) 
	+ 
	\tfrac12 
	\operatorname{Trace}\!\big( 
	\sigma(t,x)[\sigma(t,x)]^{*} 
	(\operatorname{Hess} V)(x)
	\big) 
	+ 
	\langle 
	\mu(t,x),(\nabla V)(x)
	\rangle
	\right) 
	\leq 0.  
	\end{split}
	\end{equation}
This establishes Item~\eqref{moment_estimate_special_lyapunov:item2}. The proof of \cref{moment_estimate_special_lyapunov} is thus completed.
\end{proof}

The next elementary result, \cref{polynomials_lyapunov} below, establishes in conjunction with \cref{moment_estimate_special_lyapunov} above that under certain coercivity and linear growth conditions (see \eqref{polynomials_lyapunov:ass1} in \cref{polynomials_lyapunov}) Lyapunov-type functions with polynomial growth are available (cf.\ also Grohs et al.~\cite[Lemma 2.21]{grohs2018proof}).
\cref{polynomials_lyapunov} will later on allow to infer \cref{existence_of_fixpoint_polynomial_growth} from \cref{existence_of_fixpoint_x_dependence_full_space_lyapunov}.

\begin{lemma}
	\label{polynomials_lyapunov}
Let 
	$d,m\in\N$, 
	$c,T,p,\rho\in (0,\infty)$ 
satisfy 
	$\rho = \tfrac{pc}{2} \max\{p+1,3\} 
	$, 
let 
	$\langle\cdot,\cdot\rangle\colon \R^d \times \R^d \to \R$ be the standard scalar product on $\R^d$, 
let 
	$\norm{\cdot}\colon\R^d\to [0,\infty)$ be the  standard norm on $\R^d$,
let 
	$\HSnorm{\cdot}\colon\R^{d\times m}\to [0,\infty)$ be the Frobenius norm on 
	$\R^{d\times m}$,
let 
	$\cO \subseteq  \R^d$ be a non-empty open set, 
and let 
	$\mu\colon [0,T]\times\cO\to\R^d$, 
	$\sigma\colon [0,T]\times\cO\to\R^{d\times m}$, 
	$V\colon \cO \to (0,\infty)$ 
satisfy for all 
	$t\in [0,T]$, 
	$x\in\cO$ 
that 
	\begin{equation}
	\label{polynomials_lyapunov:ass1}
	\max\!\left\{
	\langle x,\mu(t,x)\rangle
	, 
	\HSnorm{\sigma(t,x)}^2
	\right\}
	\leq c ( 1 + \norm{x}^2)
	\qandq 
	V(x) = \left( 1 + \norm{x}^2 \right)^{\nicefrac{p}{2}}. 
	\end{equation}
Then 
	\begin{enumerate}[(i)]
		\item it holds that 
		\label{polynomials_lyapunov:item1}
		$V\in C^{\infty}(\cO,(0,\infty))$ and 
		\item
		\label{polynomials_lyapunov:item2}
		it holds for all 
		$t\in [0,T]$, 
		$x\in\cO$ 
		that 
		\begin{equation}
		\tfrac12 
		\operatorname{Trace}\!
		\big( 
		\sigma(t,x)[\sigma(t,x)]^{*}(\operatorname{Hess} V)(x)
		\big) 
		+ 
		\langle 
		\mu(t,x), 
		(\nabla V)(x) 
		\rangle
		\leq 
		\rho 
		V(x) .
		\end{equation}
	\end{enumerate}
\end{lemma}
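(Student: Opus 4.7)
The plan is to verify smoothness of $V$ by composition, then to directly compute $(\nabla V)(x)$ and $(\operatorname{Hess} V)(x)$ via the chain rule, substitute these expressions into the left-hand side of the claimed inequality, and finally invoke the coercivity and linear growth bounds on $\mu$ and $\sigma$.

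For Item~\eqref{polynomials_lyapunov:item1}, the function $\cO \ni x \mapsto 1 + \Norm{x}^2 \in (0,\infty)$ is smooth and strictly positive, and $(0,\infty) \ni s \mapsto s^{\nicefrac{p}{2}} \in (0,\infty)$ is smooth, so the composition $V$ lies in $C^{\infty}(\cO,(0,\infty))$.

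For Item~\eqref{polynomials_lyapunov:item2}, I would abbreviate $\phi(x) = 1 + \Norm{x}^2$ and compute by the chain rule that $(\nabla V)(x) = p\,\phi(x)^{\nicefrac{p}{2} - 1} x$ and $(\operatorname{Hess} V)(x) = p(p-2)\,\phi(x)^{\nicefrac{p}{2} - 2}\, x x^{*} + p\,\phi(x)^{\nicefrac{p}{2} - 1} I_d$. The coercivity bound then gives $\langle \mu(t,x),(\nabla V)(x)\rangle \leq pc\, V(x)$ immediately. For the trace term I would expand it as $p(p-2)\,\phi(x)^{\nicefrac{p}{2}-2}\, \Norm{\sigma(t,x)^{*}x}^2 + p\,\phi(x)^{\nicefrac{p}{2}-1}\,\HSNorm{\sigma(t,x)}^2$ and bound $\Norm{\sigma(t,x)^{*}x}^2 \leq \HSNorm{\sigma(t,x)}^2 \Norm{x}^2 \leq c\,\phi(x)\, \Norm{x}^2$ (using that the operator norm is dominated by the Frobenius norm) together with the assumed linear growth of $\HSNorm{\sigma(t,x)}^2$.

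The only subtlety is that the coefficient $p(p-2)$ of the first Hessian term changes sign at $p=2$, so I would finish with a brief case distinction. When $p \geq 2$, using $\Norm{x}^2 \leq \phi(x)$ to collect powers of $\phi$ into $V$ yields an upper bound of $\tfrac{pc(p+1)}{2}\,V(x)$. When $p < 2$, the coefficient $p(p-2)$ is negative, so the first Hessian contribution is nonpositive and may simply be dropped, giving an upper bound of $\tfrac{3pc}{2}\,V(x)$. Both cases are encapsulated by $\rho = \tfrac{pc}{2}\max\{p+1,3\}$, matching the statement. The whole argument is elementary: no fixed-point or measurability machinery is needed, only the chain rule and the sign-dependent case split to match the stated constant.
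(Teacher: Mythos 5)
Your proposal is correct and follows essentially the same route as the paper's proof: compute $\nabla V$ and $\operatorname{Hess} V$ by the chain rule, bound $\norm{\sigma(t,x)^{*}x}^2 \leq \HSNorm{\sigma(t,x)}^2 \norm{x}^2$, and invoke the growth conditions. The only cosmetic differences are that you use matrix-vector notation where the paper uses indices and you make the $p\geq 2$ versus $p<2$ split explicit, whereas the paper encodes it via $\max\{p-2,0\}$; these are equivalent.
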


\begin{proof}[Proof of \cref{polynomials_lyapunov}] 
Throughout this proof let 
	$\sigma_{i,j}\colon [0,T]\times\cO \to \R$, 
	$i\in\{1,2,\ldots,d\}$, 
	$j\in\{1,2,\ldots,m\}$, 
satisfy for all 
	$t\in [0,T]$, 
	$x\in \cO$ 
that
	\begin{equation} 
	\sigma(t,x) = \begin{pmatrix}
	\sigma_{1,1}(t,x) 
	& 
	\sigma_{1,2}(t,x) 
	& 
	\ldots
	& 
	\sigma_{1,m}(t,x) \\
	\sigma_{2,1}(t,x) 
	&
	\sigma_{2,2}(t,x)
	&
	\ldots 
	&
	\sigma_{2,m}(t,x) \\
	\vdots 
	& 
	\vdots 
	& 
	\ddots 
	& 
	\vdots \\
	\sigma_{d,1}(t,x) 
	& 
	\sigma_{d,2}(t,x) 
	& 
	\ldots 
	& 
	\sigma_{d,m}(t,x) 
	\end{pmatrix} \in\R^{d\times m}.  
	\end{equation}
Note that the chain rule, the fact that $\R^d\ni x \mapsto 1 + \norm{x}^2 \in (0,\infty)$ is infinitely  often differentiable, and   
the fact that 
	$ (0,\infty) \ni s \mapsto s^{\frac{p}{2}} \in (0,\infty)$ 
is infinitely often differentiable establish Item~\eqref{polynomials_lyapunov:item1}.  It thus remains to prove  Item~\eqref{polynomials_lyapunov:item2}. For this, we observe for all 
	$x\in\cO$, 
	$i,j\in \{1,2,\ldots,d\}$ 
that 
	\begin{equation}
	(\nabla V)(x) 
	= 
	\tfrac{p}{2} 
	\left( 1 + \norm{x}^2 \right)^{\frac{p}{2}-1} \cdot (2x) 
	= p V(x) \tfrac{x}{1 + \norm{x}^2}
	\end{equation}
and 
	\begin{equation}
	\begin{split}
	(\tfrac{\partial^2 V}{\partial x_i\partial x_j})(x) 
	& 
	=
	\tfrac{\partial}{\partial x_i}
	\left[ 
	p V(x) \tfrac{x_j}{1+\norm{x}^2}
	\right]
	=
	p \cdot (\tfrac{\partial V}{\partial x_i})(x) \cdot \tfrac{x_j}{1 + \norm{x}^2} 
	+ 
	p V(x) \cdot \left[\tfrac{\partial}{\partial x_i}\!\left( \tfrac{x_j}{1+ \norm{x}^2}\right)\right]
	\\
	& 
	= 
	p^2 V(x) \tfrac{x_i x_j}{(1+\norm{x}^2)^2 } 
	+ 
	p V(x) 
	\tfrac{\delta_{ij}(1 + \norm{x}^2) - 2x_i x_j}{( 1 + \norm{x}^2)^2}
	\\
	& 
	= 
	p(p-2) V(x) \tfrac{x_i x_j}{(1+\norm{x}^2)^2} 
	+  p V(x) \tfrac{\delta_{ij}}{1 + \norm{x}^2}  
	\\
	& 
	= 
	p V(x)
	\left[ 
	(p-2) 
	\tfrac{x_i x_j}
	{(1+\norm{x}^2)^2} 
	+   
	\tfrac{\delta_{ij}}
	{1 + \norm{x}^2}
	\right]\!. 
	\end{split}
	\end{equation}
This yields for all 
	$t\in [0,T]$, 
	$x\in\cO$ 
that 
	\begin{equation} \label{polynomials_lyapunov:some_calculation}
	\begin{split}
	& \tfrac12 
	\operatorname{Trace}\!
	\big( 
	\sigma(t,x)[\sigma(t,x)]^{*}(\operatorname{Hess} V)(x)
	\big) 
	+ 
	\langle 
	\mu(t,x), 
	(\nabla V)(x) 
	\rangle
	\\
	& = 
	\tfrac12 \left[
	\sum_{k=1}^m\sum_{i,j=1}^d \sigma_{i,k}(t,x)\sigma_{j,k}(t,x) (\tfrac{\partial^2 V}{\partial x_i\partial x_j})(x) 
	\right]
	+ 
	\left\langle 
	\mu(t,x), p V(x) \tfrac{x}{1+ \norm{x}^2} 
	\right\rangle
	\\
	& = 
	\tfrac{p}{2} 
	\left[\left(\sum_{k=1}^m\sum_{i,j=1}^d 
	\sigma_{i,k}(t,x) \sigma_{j,k}(t,x) 
	\left(
	(p-2) \tfrac{x_i x_j}{( 1 + \norm{x}^2)^2 } 
	+
	\tfrac{\delta_{ij}}{1+\norm{x}^2}
	\right) 
	\right)
	+ \tfrac{2\langle \mu(t,x),x\rangle
	}{1+\norm{x}^2}
	\right] V(x)
	\\
	& = 
	\tfrac{p}{2} 
	\left[
	\tfrac{(p-2)}{(1+\norm{x}^2)^2}  
	\left(
	\sum_{k=1}^m 
	\left[
	\sum_{i=1}^d \sigma_{i,k}(t,x)x_i
	\right]^2
	\right)
	+ 
	\tfrac{\HSnorm{\sigma(t,x)}^2}{1+\norm{x}^2}
	+ 
	\tfrac{2\langle\mu(t,x),x\rangle}{1+\norm{x}^2}
	\right] V(x) . 
	\end{split}
	\end{equation}
Next note that for all 
	$t\in [0,T]$, 
	$x\in \cO$ 
it holds that 
	\begin{equation}
	\begin{split}
	\sum_{k=1}^m \left[\sum_{i=1}^d \sigma_{i,k}(t,x) x_i\right]^2 
	& 
	\leq 
	\sum_{k=1}^m 
	\left( \sum_{i=1}^d |\sigma_{i,k}(t,x)|^2 \right) 
	\left( \sum_{i=1}^d |x_i|^2 \right) 
	= 
	\HSnorm{\sigma(t,x)}^2
	\norm{x}^2
	\\
	& \leq 
	c ( 1 + \norm{x}^2 ) \norm{x}^2 
	\leq 
	c \left[ 1 + \norm{x}^2 \right]^2.
	\end{split} 
	\end{equation}
Combining this with \eqref{polynomials_lyapunov:some_calculation} shows that for all 
	$t\in [0,T]$, 
	$x\in\cO$ 
it holds that
	\begin{equation}
	\begin{split}
	& 
	\tfrac12 
	\operatorname{Trace}\!
	\big( 
	\sigma(t,x)[\sigma(t,x)]^{*}(\operatorname{Hess} V)(x)
	\big) 
	+ 
	\langle 
	\mu(t,x), 
	(\nabla V)(x) 
	\rangle
	\\
	& 
	\leq 
	\tfrac{p}{2} 
	\left[
	\max\{p-2,0\} c + 3c
	\right]  
	V(x) 
	= 
	\tfrac{pc}{2}
	\max\{p+1,3\} V(x)
	= \rho V(x) . 
	\end{split}
	\end{equation}
This establishes Item~\eqref{polynomials_lyapunov:item2}. 
The proof of \cref{polynomials_lyapunov} is thus completed.
\end{proof}    

\subsection{Locality properties for solutions of SDEs}

In this section we present two elementary results concerning the local 
behaviour of solutions to SDEs. These results, 
\cref{sde_compactly_supported,coinciding_until_stopping_times} below, 
are used in the proof of \cref{stochastic_continuity_lemma} 
(see \cref{subsec:SFEs_and_SDEs} below). \cref{sde_compactly_supported} 
asserts, loosely speaking, that a particle whose movements are governed 
by a  SDE with sufficiently regular 
coefficients is almost surely at rest when it finds itself in a region 
away from the supports of the coefficients. 

\begin{lemma}
	\label{sde_compactly_supported}
Let 
	$d,m\in\N$, 
	$T\in (0,\infty)$, 
let 
	$\norm{\cdot}\colon\R^d\to [0,\infty)$ be the standard norm on $\R^d$, 
let 
	$\HSnorm{\cdot}\colon \R^{d\times m}\to [0,\infty)$ 
	be the Frobenius norm on $\R^{d\times m}$, 
let 
	$\mu\in C([0,T]\times\R^d,\R^d)$, 
	$\sigma\in C([0,T]\times\R^d,\R^{d\times m})$ 
satisfy for all 
	$r\in (0,\infty)$
that 
	\begin{equation}
	\label{sde_compactly_supported:local_lipschitz_ass}
	\sup_{t\in [0,T]}
	\sup_{\substack{
			x,y\in\R^d, \\
			x\neq y, \\
			\norm{x},\norm{y}\leq r} }
	\left[ 
	\frac{
		\norm{ 
			\mu(t,x) - \mu(t,y) 
		} 
		+ 
		\HSnorm{
			\sigma(t,x) 
			- 
			\sigma(t,y) } 
	}{\norm{ x - y }}
	\right]
	< \infty, 
	\end{equation}
let 
	$\cO \subseteq  \R^d$ be an open set which satisfies $\operatorname{supp}(\mu)\cup \operatorname{supp}(\sigma) \subseteq  [0,T]\times \cO$,   
let 
	$(\Omega,\mathcal{F},\P,(\mathbb{F}_t)_{t\in [0,T]})$ be a filtered probability space which satisfies the usual conditions, 
let 
	$W\colon [0,T]\times\Omega\to\R^m$ be a standard $(\mathbb{F}_t)_{t\in [0,T]}$-Brownian motion, 
and let 
	$X \colon [0,T]\times\Omega\to\R^d$ 
be an $(\mathbb{F}_t)_{t\in [0,T]}$-adapted stochastic process with continuous sample paths which satisfies that for all 
	$t\in [0,T]$ 
it holds $\P$-a.s.~that 
	\begin{equation} \label{sde_compactly_supported:sde_ass}
	X_t 
	= 
	X_0 
	+ 
	\int_0^t 
	\mu(s,X_s)\,ds 
	+ 
	\int_0^t 
	\sigma(s,X_s) \,dW_s. 
	\end{equation}
Then 
	\begin{enumerate}[(i)]
		\item
		\label{sde_compactly_supported:item1} 
		it holds that 
		$
		\Big[
		\big(
		\P(X_0 \notin \cO) = 1
		\big)
		\Rightarrow 
		\big( 
		\P(\Forall t\in [0,T]\colon X_t = X_0) = 1
		\big)
		\Big]
		$
		and
		\item
		\label{sde_compactly_supported:item2} 
		it holds that 
		$
		\Big[ 
		\big( 
		\P(X_0 \in \cO) = 1
		\big)
		\Rightarrow 
		\big(
		\P(\Forall t\in [0,T]\colon 
		X_t \in \overline{\cO}) = 1
		\big)
		\Big]
		$. 
	\end{enumerate}
\end{lemma}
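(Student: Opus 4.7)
The key observation is that, since $F := \operatorname{supp}(\mu)\cup\operatorname{supp}(\sigma)$ is a closed subset of $[0,T]\times\R^d$ contained in $[0,T]\times\cO$, both $\mu$ and $\sigma$ vanish identically on the open complement of $F$; in particular, the dynamics \eqref{sde_compactly_supported:sde_ass} freeze whenever the space--time trajectory $s\mapsto (s,X_s)$ avoids $F$. I would encode this through a single stopping-time argument: define
\[
\tau := \inf\{t\in[0,T] : (t,X_t)\in F\}\wedge T,
\]
which is an $(\mathbb{F}_t)$-stopping time since $F$ is closed, the process $(t,X_t)$ is continuous and adapted, and the filtration satisfies the usual conditions. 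Working on the almost sure event where \eqref{sde_compactly_supported:sde_ass} holds simultaneously for all $t\in[0,T]$ (which exists by standard continuous-version arguments for It\^o integrals), the vanishing of $\mu(s,X_s)$ and $\sigma(s,X_s)$ on $[0,\tau)$ together with continuity of $X$ yields $X_{\tau\wedge t}=X_0$ for every $t\in[0,T]$.

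For Item~\eqref{sde_compactly_supported:item1}, assume $\P(X_0\notin\cO)=1$. On the event $\{X_0\notin\cO\}\cap\{\tau<T\}$, the above would give $X_\tau=X_0\notin\cO$; however, closedness of $F$ together with the definition of $\tau$ as a hitting time forces $(\tau,X_\tau)\in F\subseteq[0,T]\times\cO$, hence $X_\tau\in\cO$, a contradiction. Thus $\tau=T$ almost surely on $\{X_0\notin\cO\}$, and $X_t = X_{\tau\wedge t} = X_0$ for every $t\in[0,T]$ almost surely.

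For Item~\eqref{sde_compactly_supported:item2}, assume $\P(X_0\in\cO)=1$ and introduce the exit time $\eta := \inf\{t\in[0,T]: X_t\notin\cO\}\wedge T$, which is a stopping time because $\R^d\setminus\cO$ is closed. For $t<\eta$ one has $X_t\in\cO\subseteq\overline{\cO}$, and continuity of $X$ gives $X_\eta\in\overline{\cO}$; on $\{\eta<T\}$ moreover $X_\eta\notin\cO$, so $X_\eta\in\partial\cO$. I would then rerun the freezing argument initialised at $\eta$: setting $\tau' := \inf\{t\in[\eta,T] : (t,X_t)\in F\}\wedge T$, subtracting \eqref{sde_compactly_supported:sde_ass} at times $\eta$ and $t\geq\eta$ and invoking the vanishing integrands gives $X_t=X_\eta$ for $t\in[\eta,\tau']$ almost surely, and the same closedness-of-$F$ contradiction excludes $\tau'<T$. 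Consequently $X_t = X_\eta\in\overline{\cO}$ for $t\in[\eta,T]$, and combined with the previous step $X_t\in\overline{\cO}$ for every $t\in[0,T]$ almost surely.

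The only real technical point is the selection of a version of \eqref{sde_compactly_supported:sde_ass} holding on a single almost sure event for all $t\in[0,T]$ simultaneously, so that the pathwise stopping-time arguments are legitimate; this is standard for continuous adapted processes under the usual conditions. Interestingly, the local Lipschitz hypothesis \eqref{sde_compactly_supported:local_lipschitz_ass} does not appear to be needed in the proof itself: the freezing follows automatically from the vanishing of the integrands, with no SDE uniqueness theorem being invoked.
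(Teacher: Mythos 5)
Your proof is correct and takes a genuinely different route from the paper's. The paper constructs a second, explicit solution of the SDE --- the constant process $X_0$ for Item~\eqref{sde_compactly_supported:item1}, and the process stopped at the first exit time of $\overline{\cO}$ for Item~\eqref{sde_compactly_supported:item2} --- and then invokes the Karatzas--Shreve pathwise uniqueness theorem for locally Lipschitz coefficients, which is precisely where hypothesis~\eqref{sde_compactly_supported:local_lipschitz_ass} enters. Your argument is instead a direct pathwise one: introduce the hitting time $\tau$ of the closed set $F=\operatorname{supp}(\mu)\cup\operatorname{supp}(\sigma)$, observe that both integrands in \eqref{sde_compactly_supported:sde_ass} vanish on $[0,\tau)$ so $X$ is frozen at $X_0$ up to $\tau$, and derive a contradiction between $X_\tau=X_0\notin\cO$ and $(\tau,X_\tau)\in F\subseteq[0,T]\times\cO$ (valid for $\tau<T$ by closedness of $F$ and continuity of $(t,X_t)$), forcing $\tau=T$; the two-stage freeze after first hitting $\partial\cO$ at time $\eta$, using $X_\eta\in\partial\cO$ and $\partial\cO\cap\cO=\emptyset$, handles Item~\eqref{sde_compactly_supported:item2} in the same way. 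Your observation that \eqref{sde_compactly_supported:local_lipschitz_ass} is never used is correct and is the real gain: the lemma then holds for any solution of the SDE, with no uniqueness required, whereas the paper's proof buys brevity by citing a textbook theorem. The one step worth spelling out if you write this up is why $\int_0^{\tau\wedge t}\sigma(s,X_s)\,dW_s=0$: the integrand $\sigma(s,X_s)\mathbbm{1}_{\{s\leq\tau\}}$ vanishes for every $s<\tau$, hence for Lebesgue-a.e.\ $s$, and this suffices because the It\^o integral depends on its integrand only up to $ds\otimes\P$-null modification.
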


\begin{proof}[Proof of \cref{sde_compactly_supported}]
	We first prove Item~\eqref{sde_compactly_supported:item1}. 
	For this we assume that $\P(X_0 \notin \cO) = 1$. Observe that this implies 
	$
	\P(\Forall t\in [0,T]\colon \norm{\mu(t,X_0)} 
	+ 
	\HSnorm{\sigma(t,X_0)} 
	= 0
	)
	= 
	1
	$. 
Therefore, we obtain that
	\begin{equation}
	\label{sde_compactly_supported_definition_of_Y}
	Y = ( [0,T]\times\Omega \ni (t,\omega)\mapsto X_0(\omega) \in \R^d )
	\end{equation}
is an $(\mathbb{F }_t)_{t\in [0,T]}$-adapted stochastic process with continuous sample paths which satisfies that for all $t\in [0,T]$ it holds $\P$-a.s.~that 
	\begin{equation}
	\begin{split}
	Y_t 
	& = 
	X_0
	= 
	X_0 
	+ 
	\int_0^t 0 \,ds 
	+ 
	\int_0^t 0 \,dW_s 
	= 
	X_0 
	+ 
	\int_0^t \mu(s,X_0)\,ds 
	+ 
	\int_0^t \sigma(s,X_0)\,dW_s
	\\
	& = 
	X_0 
	+ 
	\int_0^t \mu(s,Y_s)\,ds 
	+ 
	\int_0^t \sigma(s,Y_s)\,dW_s . 
	\end{split}
	\end{equation}
Karatzas \& Shreve~\cite[Theorem 5.2.5]{KaSh1991_BrownianMotionAndStochasticCalculus} and \eqref{sde_compactly_supported:local_lipschitz_ass}--\eqref{sde_compactly_supported_definition_of_Y} hence assure that 
	\begin{equation}
	\P\left( 
	\Forall t\in [0,T]\colon
	\,X_t = X_0 
	\right) 
	=
	\P\left( 
	\Forall t\in [0,T]\colon\,X_t = Y_t 
	\right) 
	= 1. 
	\end{equation}
This establishes Item~\eqref{sde_compactly_supported:item1}. 
Next we prove Item~\eqref{sde_compactly_supported:item2}. For this we assume that 
	$\P(X_0\in\cO)=1$ 
	and let 
	$\tau\colon\Omega\to [0,T]$ 
	satisfy  
	$
	\tau
	=
	\inf( 
	\{t\in [0,T]\colon~X_t \notin \overline{\cO} \}
	\cup
	\{T\}
	)
	$.  
Note that $\tau$ is an $(\mathbb{F}_t)_{t\in [0,T]}$-stopping time. Let $Y\colon [0,T]\times\Omega\to\R^d$ satisfy for all 
	$t\in [0,T]$, 
	$\omega\in\Omega$ 
that 
	$ Y_t(\omega) = X_{\min\{t,\tau(\omega)\}}(\omega)$. 
Observe that $Y\colon [0,T]\times\Omega\to\R^d$ is an $(\mathbb{F}_t)_{t\in [0,T]}$-adapted stochastic process with continuous sample paths. Moreover, note that for all 
	$t\in [0,T]$ 
it holds $\P$-a.s.~that 
	\begin{equation}
	\begin{split}
	Y_t 
	& = 
	X_{\min\{t,\tau\}} 
	= 
	X_0 
	+ 
	\int_0^{\min\{t,\tau\}} 
	\mu(s,X_s)\,ds 
	+ 
	\int_0^{\min\{t,\tau\}}
	\sigma(s,X_s)\,dW_s \\
	& = 
	X_0 
	+ 
	\int_0^t \mathbbm{1}_{\{0<s\leq \tau\}} 
	\mu(s,X_s)\,ds 
	+ 
	\int_0^t \mathbbm{1}_{\{0<s\leq \tau\}}
	\sigma(s,X_s)\,dW_s
	.
	\end{split} 
	\end{equation}
Combining this with the fact that for all 
	$t\in [0,T]$ 
it holds that 
	$
	\mathbbm{1}_{\{t\leq\tau\}} X_t 
	=
	\mathbbm{1}_{\{t\leq\tau\}} Y_t 
	$
and 
	$
	\mathbbm{1}_{\{\tau < t\}} 
	[ 
	\norm{ \mu(t,Y_{t}) } 
	+ 
	\HSnorm{ \sigma(t,Y_{t})} 
	] 
	= 0$ 
we obtain that for all 
	$t\in [0,T]$ 
it holds $\P$-a.s.~that 
	\begin{equation}
	\begin{split}
	Y_t 
	& = 
	X_0 + \int_0^t \mu(s,Y_{s}) \,ds
	+ 
	\int_0^t \sigma(s,Y_{s})\,dW_s . 
	\end{split}
	\end{equation}
Karatzas \& Shreve~\cite[Theorem 5.2.5]{KaSh1991_BrownianMotionAndStochasticCalculus}, 
\eqref{sde_compactly_supported:local_lipschitz_ass}, 
and 
\eqref{sde_compactly_supported:sde_ass} 
hence demonstrate that 
	\begin{equation}
	\P\left( \Forall t\in [0,T]\colon~X_t=Y_t\right) 
	= 
	1 . 
	\end{equation}
	This establishes Item~\eqref{sde_compactly_supported:item2}. The proof of \cref{sde_compactly_supported} is thus completed. 
\end{proof}

The next result, \cref{coinciding_until_stopping_times} below, 
basically asserts that the solutions of SDEs 
coincide as long as the trajectories stay in a domain in which the drift 
and diffusion coefficients are the same. 

\begin{lemma}
	\label{coinciding_until_stopping_times}
Let 
	$d,m\in\N$, 
	$T\in (0,\infty)$, 
let 
	$\norm{\cdot}\colon\R^d\to [0,\infty)$ 
be the standard norm on $\R^d$, 
let 
	$\HSnorm{\cdot}\colon\R^{d\times m}\to [0,\infty)$ 
be the Frobenius norm on $\R^{d\times m}$, 
let 
	$\cO \subseteq  \R^d$ 
be an open set, 
for every 
	$r\in (0,\infty)$ 
let 
	$O_r\subseteq\cO$ 
satisfy 
	$O_r = \{x\in\cO\colon \norm{x}\leq r~\text{and}~\{y\in\R^d\colon\norm{y-x}<\nicefrac{1}{r}\}\subseteq\cO \}$, 
let 
	$\cC \subseteq [0,T]\times\R^d$ 
be a closed set which satisfies 
	$\cC \subseteq [0,T]\times\cO$, 
let    
	$\mu_1,\mu_2 \in C([0,T]\times\cO,\R^d)$,
	$\sigma_1, \sigma_2
	\in C([0,T]\times\cO,\R^{d\times m})$ 
satisfy for all
	$r\in (0,\infty)$ 
that
	$\mu_1|_{\cC}=\mu_2|_{\cC}$, 
	$\sigma_1|_{\cC}=\sigma_2|_{\cC}$, 
	and 
	\begin{equation}
	\sup\!\left(
	\left\{ 
	\tfrac{
		\norm{\mu_1(t,x) - \mu_1(t,y)} 
		+ 
		\HSnorm{\sigma_1(t,x) - \sigma_1(t,y)} 
	}{\norm{x-y}}\colon 
	t\in [0,T], 
	x,y\in O_r, 
	x\neq y
	\right\}
	\cup \{T\} 
	\right)
	< \infty
	,
	\end{equation} 		
let 
	$(\Omega,\cF,\P,(\F_t)_{t\in [0,T]})$ 
be a filtered probability space which satisfies the usual conditions, 
let 
	$W\colon [0,T]\times\Omega\to\R^m$ 
be a standard $(\F_t)_{t\in [0,T]}$-Brownian motion, 
let 
	$X^{(i)}=(X^{(i)}_t)_{t\in [0,T]}\colon [0,T]\times\Omega\to\cO$, 
	$i\in\{1,2\}$,
be $(\F_t)_{t\in [0,T]}$-adapted stochastic processes with continuous 
sample paths which satisfy that for every 
	$i\in\{1,2\}$, 
	$t\in [0,T]$ 
it holds $\P$-a.s.~that 
	\begin{equation}
	\label{coinciding_until_stopping_times:sde_ass}
	X^{(i)}_t = X^{(i)}_0 
	+ \int_0^t \mu_i(s,X^{(i)}_s)\,ds 
	+ \int_0^t \sigma_i(s,X^{(i)}_s)\,dW_s,  
	\end{equation}
	assume that 
	$X^{(1)}_0=X^{(2)}_0$, 
	and let 
	$\tau \colon \Omega\to [0,T]$ 
	satisfy  
	$
	\tau = \inf(\{t\in [0,T]\colon (t,X^{(1)}_t) \notin\cC~\text{or}~(t,X^{(2)}_t)\notin\cC \}\cup\{T\})
	$.
Then it holds that 
	\begin{equation}
	\label{coinciding_until_stopping_times:claim}
	\P\left( 
	\Forall t\in [0,T]\colon 
	\mathbbm{1}_{\{t \leq \tau\}}
	\Normm{
		X^{(1)}_t - X^{(2)}_t } = 0
	\right) = 1 . 
	\end{equation}
\end{lemma}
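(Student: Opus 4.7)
The plan is to reduce the claim to the pathwise uniqueness of a single SDE by passing to stopped processes. First, I would check that $\tau$ is an $(\F_t)_{t\in [0,T]}$-stopping time: since $\cC$ is closed, its complement in $[0,T]\times\R^d$ is open, and the sample paths of $[0,T]\ni t\mapsto (t,X^{(i)}_t)$ are continuous, so $\tau$ is a first hitting time of an open set, which is an $(\F_t)$-stopping time under the usual conditions. Then I would introduce the stopped processes
\begin{equation}
 Y^{(i)}_t := X^{(i)}_{\min\{t,\tau\}}, \qquad i\in\{1,2\},\ t\in [0,T],
\end{equation}
which are $(\F_t)_{t\in [0,T]}$-adapted with continuous sample paths.

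Second, I would derive the SDE satisfied by $Y^{(1)}$ and $Y^{(2)}$. Stopping the identity \eqref{coinciding_until_stopping_times:sde_ass} at $\tau$ gives, for each $i\in\{1,2\}$ and all $t\in [0,T]$, $\P$-a.s.,
\begin{equation}
 Y^{(i)}_t = X^{(i)}_0 + \int_0^t \mathbbm{1}_{\{s\leq \tau\}}\mu_i(s,X^{(i)}_s)\,ds + \int_0^t \mathbbm{1}_{\{s\leq \tau\}}\sigma_i(s,X^{(i)}_s)\,dW_s.
\end{equation}
The key observation is that for every $s\leq \tau$ one has $(s,X^{(i)}_s)\in \cC$ (continuity of the path, closedness of $\cC$, and the definition of $\tau$), so the hypothesis $\mu_1|_{\cC}=\mu_2|_{\cC}$, $\sigma_1|_{\cC}=\sigma_2|_{\cC}$ yields $\mu_i(s,X^{(i)}_s)=\mu_1(s,X^{(i)}_s)=\mu_1(s,Y^{(i)}_s)$ and likewise for $\sigma$, where in the last step I use $X^{(i)}_s=Y^{(i)}_s$ on $\{s\leq\tau\}$. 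Hence both $Y^{(1)}$ and $Y^{(2)}$ solve the \emph{same} stochastic integral equation
\begin{equation}
 Y^{(i)}_t = X^{(1)}_0 + \int_0^t \mathbbm{1}_{\{s\leq \tau\}}\mu_1(s,Y^{(i)}_s)\,ds + \int_0^t \mathbbm{1}_{\{s\leq \tau\}}\sigma_1(s,Y^{(i)}_s)\,dW_s
\end{equation}
driven by the common initial value $X^{(1)}_0=X^{(2)}_0$.

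Third, I would invoke pathwise uniqueness to conclude $\P(\Forall t\in [0,T]\colon Y^{(1)}_t=Y^{(2)}_t)=1$, which is exactly \eqref{coinciding_until_stopping_times:claim}. Since $\mu_1,\sigma_1$ are only locally Lipschitz (bounded Lipschitz constant on each $O_r$), a standard further localization is required: define stopping times
\begin{equation}
 \rho_n := \inf\!\big(\{t\in[0,T]\colon Y^{(1)}_t\notin O_n\ \text{or}\ Y^{(2)}_t\notin O_n\}\cup\{T\}\big),\qquad n\in\N,
\end{equation}
(well-defined as stopping times since $O_n$ is closed and the paths are continuous), note $\rho_n\nearrow T$ $\P$-a.s.\ because the continuous sample paths of $Y^{(1)},Y^{(2)}$ take values in $\cO$ and are therefore eventually contained in $O_n$ for all $n$ large enough, and apply a Gronwall/It\^o-isometry argument on $[0,\rho_n]$ to $\EXP{\sup_{t\in [0,r]}\|Y^{(1)}_{\min\{t,\rho_n\}}-Y^{(2)}_{\min\{t,\rho_n\}}\|^2}$, using the bounded Lipschitz constants of $\mu_1,\sigma_1$ on $O_n$; this is exactly the mechanism behind Karatzas--Shreve \cite[Theorem 5.2.5]{KaSh1991_BrownianMotionAndStochasticCalculus} already cited in the proof of \cref{sde_compactly_supported}. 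The main technical point will be handling this double localization cleanly; an equivalent and slightly slicker route would be to extend $\mu_1,\sigma_1$ to globally Lipschitz functions $\tilde\mu_n,\tilde\sigma_n$ on $[0,T]\times\R^d$ that agree with $\mu_1,\sigma_1$ on a neighborhood of $O_n$ (via a continuous cutoff), apply the cited theorem directly to the modified global SDE, and then let $n\to\infty$ using $\rho_n\nearrow T$.
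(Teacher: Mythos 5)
Your main argument is correct and follows essentially the same route as the paper: localize at $\tau$ so that on $\{s\le\tau\}$ the paths stay in the closed set $\cC$ (whence $\mu_2,\sigma_2$ may be replaced by $\mu_1,\sigma_1$ in the integrands), localize further at the exit time $\rho_n$ of $O_n$ to obtain a uniform Lipschitz constant and an $L^2$-bound for the stopped difference, close with Minkowski/It\^o-isometry/Gronwall, and send $n\to\infty$; your explicit introduction of the stopped processes $Y^{(i)}$ and the ``common random-coefficient SDE'' viewpoint is just a different packaging of the paper's direct difference computation. One small caution about the ``slicker'' alternative you sketch at the end: after extending $\mu_1,\sigma_1$ to globally Lipschitz $\tilde\mu_n,\tilde\sigma_n$, the doubly stopped processes still do not solve the unstopped SDE driven by $\tilde\mu_n,\tilde\sigma_n$ --- they solve a random-coefficient variant carrying the factor $\mathbbm{1}_{\{s\le\min\{\tau,\rho_n\}\}}$ --- so Karatzas--Shreve~\cite[Theorem 5.2.5]{KaSh1991_BrownianMotionAndStochasticCalculus} cannot be cited off the shelf for $Y^{(1)}=Y^{(2)}$; one still has to run the Gronwall estimate (or a localization-of-pathwise-uniqueness comparison with the strong solution of the global SDE), which is precisely what your main route does.
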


\begin{proof}[Proof of  \cref{coinciding_until_stopping_times}] 
Throughout this proof let 
	$\rho_n\colon \Omega \to [0,T]$, 
	$n\in\N$, 
satisfy for all 
	$n\in\N$ 
that 
	$
	\rho_n 
	= 
	\inf\!\big(
	\{ 
	t\in [0,T]\colon 
	X^{(1)}_t \in \cO \setminus O_n~\text{or}~X^{(2)}_t \in \cO \setminus O_n 
	\}
	\cup \{T\} 
	\big)
	$
and let 
	$L_n\in [0,\infty)$, $n\in\N$, 
be real numbers which satisfy for all 
	$t\in [0,T]$, 
	$x,y\in O_n$ 
that 
	\begin{equation}
	\label{coinciding_until_stopping_times:local_lipschitz_constants}
	\norm{\mu_1(t,x) - \mu_1(t,y)} 
	+ 
	\HSnorm{\sigma_1(t,x) - \sigma_1(t,y)} 
	\leq L_n \norm{x-y}  
	. 
	\end{equation}
Observe that for all 
	$n\in\N$ 
it holds that $\tau$ and $\rho_n$ are $(\F_t)_{t\in [0,T]}$-stopping times. 
Moreover, note that for every $K \subseteq \cO$ compact there exists $n \in \N$ such that $K \subseteq O_n$. This and the fact that $X^{(1)}$ and $X^{(2)}$ have continuous sample paths ensure that for all 
	$\omega \in \Omega$ 
there exists 
	$n\in\N$ 
such that for all 
	$k\in\N$ 
with 
	$k\geq n$ 
it holds that 
	$\rho_k(\omega) = T$. 
Next note that the assumption that 
	$X^{(1)}$ 
and 
	$X^{(2)}$ 
have continuous sample paths and the fact that 
	$O_n$, $n\in\N$, 
are compact imply that for all 
	$n \in \N$, 
	$\omega \in \{\rho_n > 0\}$, 
	$i \in \{1,2\}$
it holds that 
	$X^{(i)}_{\rho_n(\omega)}(\omega) \in O_n$. 
Combining this with the assumption that 
	$X^{(1)}_0 = X^{(2)}_0$
assures that for all 
	$n\in\N$, 
	$t\in [0,T]$ 
it holds that 
\begin{equation} 
	\Normm{X^{(1)}_{\min\{t,\tau,\rho_n\}}-X^{(2)}_{\min\{t,\tau,\rho_n\}}}
	\leq 2n
	.
\end{equation} 
This ensures for every 
	$n\in\N$ 
that 
	\begin{equation}
	\label{coinciding_until_stopping_times:l2_integrability_of_X1_and_X2}
	\sup_{t\in [0,T]} 
	\left( 
	\Exp{
		\norm{
			X^{(1)}_{\min\{t,\tau,\rho_n\}}
			- 
			X^{(2)}_{\min\{t,\tau,\rho_n\}}
		}^2
	} 
	\right)  
	< 
	\infty. 
	\end{equation}
Next note that the fact that for all 
	$s \in (0,T]$ 
it holds that 
	$
	\mathbbm{1}_{\{s\leq\tau\}}
	\big[ 
	\Norm{\mu_1(s,X^{(2)}_s)-\mu_2(s,X^{(2)}_s)}
	+ 
	\HSNorm{\sigma_1(s,X^{(2)}_s)-\sigma_2(s,X^{(2)}_s)}
	\big]=0
	$, 
the assumption that 
	$ X^{(1)}_0 = X^{(2)}_0 $, 
and
	\eqref{coinciding_until_stopping_times:sde_ass}
ensure that for all 
	$n\in\N$,
	$t\in [0,T]$ 
it holds $\P$-a.s.~that 
	\begin{equation}
	\begin{split}
	X^{(1)}_{\min\{t,\tau,\rho_n\}} - X^{(2)}_{\min\{t,\tau,\rho_n\}} 
	& 	= 
	\int_0^{\min\{t,\tau,\rho_n\}}
	\left[ 
	\mu_1(s,X^{(1)}_{s}) 
	- 
	\mu_2(s,X^{(2)}_{s})
	\right]\! 
	\,ds
	\\
	& \qquad 
	+ 
	\int_0^{\min\{t,\tau,\rho_n\}}
	\left[ 
	\sigma_1(s,X^{(1)}_{s})
	- 
	\sigma_2(s,X^{(2)}_{s})
	\right]\!
	\,dW_s 
	\\
	& 
	=
	\int_0^t \mathbbm{1}_{\{0<s\leq\min\{\tau,\rho_n\}\}} 
	\left[ 
	\mu_1(s,X^{(1)}_{s}) 
	- 
	\mu_2(s,X^{(2)}_{s})
	\right]\!
	\,ds
	\\
	& \qquad 
	+ 
	\int_0^{t}
	\mathbbm{1}_{\{0<s\leq\min\{\tau,\rho_n\}\}} 
	\left[ 
	\sigma_1(s,X^{(1)}_{s})
	- 
	\sigma_2(s,X^{(2)}_{s})
	\right]\! 
	\,dW_s 
	\\
	& 
	= 
	\int_0^t \mathbbm{1}_{\{0<s\leq\min\{\tau,\rho_n\}\}} 
	\left[ 
	\mu_1(s,X^{(1)}_{s}) 
	- 
	\mu_1(s,X^{(2)}_{s})
	\right]\!
	\,ds
	\\
	& \qquad 
	+ 
	\int_0^{t}
	\mathbbm{1}_{\{0<s\leq\min\{\tau,\rho_n\}\}} 
	\left[ 
	\sigma_1(s,X^{(1)}_{s})
	- 
	\sigma_1(s,X^{(2)}_{s})
	\right]\! 
	\,dW_s 
	. 
	\end{split}
	\end{equation}
This implies that for all 
	$n\in\N$,
	$t\in [0,T]$ 
it holds $\P$-a.s.~that 
	\begin{equation}
	\begin{split} 
	& X^{(1)}_{\min\{t,\tau,\rho_n\}} - X^{(2)}_{\min\{t,\tau,\rho_n\}}
	\\ 
	& \qquad  
	= 
	\int_0^t \mathbbm{1}_{\{0<s\leq \min\{\tau,\rho_n\} \}} 
	\left[ 
	\mu_1\big(s,X^{(1)}_{\min\{s,\tau,\rho_n\}}\big) 
	- 
	\mu_1\big(s,X^{(2)}_{\min\{s,\tau,\rho_n\}}\big)
	\right]\! 
	\,ds
	\\
	& \qquad \qquad	+ 
	\int_0^{t}
	\mathbbm{1}_{\{0<s\leq \min\{\tau,\rho_n\} \}} 
	\left[ 
	\sigma_1\big(s,X^{(1)}_{\min\{s,\tau,\rho_n\}}\big)
	- 
	\sigma_1\big(s,X^{(2)}_{\min\{s,\tau,\rho_n\}}\big)
	\right]\! 
	\,dW_s 
	. 
	\end{split}
	\end{equation}
Minkowski's inequality, It\^o's isometry, and  \eqref{coinciding_until_stopping_times:local_lipschitz_constants}--\eqref{coinciding_until_stopping_times:l2_integrability_of_X1_and_X2} hence yield for all
	$n\in\N$,
	$t\in [0,T]$ 
that 
	\begin{equation}
	\begin{split}
	& \Exp{ 
		\norm{
			X^{(1)}_{\min\{t,\tau,\rho_n\}} 
			- 
			X^{(2)}_{\min\{t,\tau,\rho_n\}}
		}^2
	}^{\nicefrac12}
	\\
	& \qquad \leq 
	\int_0^t 
	\Exp{\mathbbm{1}_{
			\{ 0<s\leq \min\{\tau,\rho_n\} \}
		}
		\norm{\mu_1(s,X^{(1)}_{\min\{s,\tau,\rho_n\}}) 
		- 
		\mu_1(s,X^{(2)}_{\min\{s,\tau,\rho_n\}})}^2
	}^{\nicefrac12}\,ds
	\\
	& \qquad \qquad  
	+ 
	\Exp{\norm{\int_0^{t}
			\mathbbm{1}_{
				\{ 0<s\leq \min\{\tau,\rho_n\} \}
			} 
			\left[ 
			\sigma_1(s,X^{(1)}_{\min\{s,\tau,\rho_n\}})
			- 
			\sigma_1(s,X^{(2)}_{\min\{s,\tau,\rho_n\}})
			\right]\! 
			\,dW_s }^2}^{\nicefrac12} 
	\\
	& \qquad \leq 
	\int_0^t 
	\Exp{
		\norm{\mu_1(s,X^{(1)}_{\min\{s,\tau,\rho_n\}}) 
		- 
		\mu_1(s,X^{(2)}_{\min\{s,\tau,\rho_n\}})}^2
	}^{\nicefrac12}\,ds
	\\
	& \qquad \qquad
	+ 
	\left[\int_0^t \Exp{\HSnorm{\sigma_1(s,X^{(1)}_{\min\{s,\tau,\rho_n\}})
			- 
			\sigma_1(s,X^{(2)}_{\min\{s,\tau,\rho_n\}})}^2}\!\,ds
	\right]^{\nicefrac12} 
	\\
	& \qquad \leq 
	L_n
	\int_0^t \Exp{\norm{ 
			X^{(1)}_{\min\{s,\tau,\rho_n\}} 
			- 
			X^{(2)}_{\min\{s,\tau,\rho_n\}}
		}^2
	}^{\nicefrac12} \,ds 
	\\
	& \qquad \qquad + 
	L_n
	\left[ \int_0^t 
	\Exp{\norm{
			X^{(1)}_{\min\{s,\tau,\rho_n\}} 
			- 
			X^{(2)}_{\min\{s,\tau,\rho_n\}}}^2
	}\!\,ds
	\right]^{\nicefrac12}. 
	\end{split}
	\end{equation}
The fact that for all 
	$a,b\in [0,\infty)$ 
it holds that 
	$(a+b)^2 \leq 2a^2+2b^2$ 
and H\"older's inequality hence demonstrate for all 
	$n\in\N$,
	$t\in [0,T]$ 
that 
	\begin{equation}
	\Exp{ 
		\norm{
			X^{(1)}_{\min\{t,\tau,\rho_n\}}
			- 
			X^{(2)}_{\min\{t,\tau,\rho_n\}}
		}^2
	}
	\leq 
	2(L_n)^2(T+1) 
	\int_0^t \Exp{\norm{
			X^{(1)}_{\min\{s,\tau,\rho_n\}} 
			- 
			X^{(2)}_{\min\{s,\tau,\rho_n\}}}^2
	}\,ds . 
	\end{equation}
Combining this with Gronwall's inequality and \eqref{coinciding_until_stopping_times:l2_integrability_of_X1_and_X2} implies for all 
	$n\in\N$,
	$t\in [0,T]$ 
that 
	\begin{equation}
	\Exp{ 
		\norm{
			X^{(1)}_{\min\{t,\tau,\rho_n\}} 
			- 
			X^{(2)}_{\min\{t,\tau,\rho_n\}} 
		}^2
	}
	= 0.
	\end{equation}
The fact that $X^{(1)}$ and $X^{(2)}$ have continuous sample paths hence  ensures for all 
	$n\in\N$
that 
	\begin{equation}
	\P\!\left( \Forall t\in [0,T]\colon \mathbbm{1}_{\{ t\leq \min\{\tau,\rho_n\} \}} 
	\Normm{X^{(1)}_{t} - X^{(2)}_{t}} = 0\right) = 1. 
	\end{equation}
Therefore we obtain that 
	\begin{equation}
	\P\!\left( \Forall n\in\N~ \Forall t\in [0,T]\colon \mathbbm{1}_{\{ t\leq \min\{\tau,\rho_n\} \}} 
	\Normm{X^{(1)}_{t} - X^{(2)}_t} = 0\right) = 1. 
	\end{equation}
This implies that 
	\begin{equation}
	\P\!\left( \Forall t\in [0,T]\colon \mathbbm{1}_{\{ t \leq \tau \} } 
	\Normm{X^{(1)}_{t} - X^{(2)}_{t}} = 0\right) = 1. 
	\end{equation}
This establishes \eqref{coinciding_until_stopping_times:claim}. The proof of \cref{coinciding_until_stopping_times} is thus completed.
\end{proof}

\subsection{Continuity properties for solutions of SDEs}
\label{subsec:continuous_dependence}

The well-known \cref{stability_for_sdes} below (cf.~also Stroock~\cite[Theorem I.2.2]{Stroock1982LecturesOnSDEs}) estimates the difference between two solutions to the same SDE that start at different times and different places. \cref{stability_for_sdes} is a crucial ingredient in the proof of \cref{stochastic_continuity_lemma}, where it is used to show that the solution to an auxiliary SDE evaluated at a certain time is stochastically continuous as a function of the initial values. 
\begin{lemma} \label{stability_for_sdes}
Let 
	$d,m\in\N$, 
	$L,T\in (0,\infty)$, 
let 
	$\norm{\cdot}\colon\R^d\to [0,\infty)$ 
be the standard norm on $\R^d$, 
let 
	$\HSnorm{\cdot}\colon\R^{d\times m}\to [0,\infty)$ 
be the Frobenius norm on 
	$\R^{d\times m}$,    
let 
	$\cO \subseteq  \R^d$ be a non-empty open set, 
let 
	$\mu\in C([0,T]\times\cO,\R^d)$, 
	$\sigma\in C([0,T]\times\cO,\R^{d\times m})$ 
be compactly supported functions which satisfy for all 
	$t\in [0,T]$, 
	$x,y\in \cO$
that 
	\begin{equation}
	\label{stability_for_sdes:lipschitz_constant}
	\norm{
		\mu(t,x) 
		- 
		\mu(t,y) } 
	+ 
	\HSnorm{ 
		\sigma(t,x) 
		- 
		\sigma(t,y)} 
	\leq L \norm{ x - y }
	, 
	\end{equation} 
let 
	$(\Omega,\mathcal{F},\P,(\mathbb{F}_t)_{t\in [0,T]})$ be a 
filtered probability space which satisfies the usual conditions, 
let 
	$W\colon [0,T]\times\Omega\to\R^m$ be a standard $(\mathbb{F}_t)_{t\in [0,T]}$-Brownian motion, 
and for every 
	$t\in [0,T]$, 
	$x\in \cO$ 
let 
	$X^{t,x} = (X^{t,x}_{s})_{s\in [t,T]}\colon [t,T]\times\Omega\to\cO$ 
be an $(\mathbb{F}_s)_{s\in [t,T]}$-adapted stochastic process with continuous sample paths which satisfies that for all 
	$s\in [t,T]$ 
it holds $\P$-a.s.~that 
	\begin{equation}
	\label{stability_for_sdes:sde}
	X^{t,x}_{s} 
	= 
	x 
	+ 
	\int_t^s 
	\mu(r, X^{t,x}_{r} ) \,dr
	+ 
	\int_t^s \sigma(r, X^{t,x}_{r} ) \,dW_r . 
	\end{equation}
Then it holds for all 
	$t\in [0,T]$, 
	$\mathfrak{t}\in [t,T]$, 
	$s\in [\mathfrak{t},T]$, 
	$x,\mathfrak{x}\in \cO$
that 
	\begin{equation}
	\label{stability_for_sdes:claim}
	\begin{split}
	&
	\Exp{\norm{X^{t,x}_{s} - X^{\mathfrak{t},\mathfrak{x}}_{s}}^2}
	\\
	& \leq 
	9 
	\left[ 
	\norm{x-\mathfrak{x}} + |t-\mathfrak{t}|^{\nicefrac{1}{2}} 
	\right]^2 
	\left[ 1 + \sqrt{T} \sup_{r\in [0,T]} \norm{\mu(r,\mathfrak{x})} 
	+ \sup_{r\in [0,T]} \HSnorm{\sigma(r,\mathfrak{x})} \right]^2 
	\exp\!\left(6L^2T(T+1)\right).
	\end{split} 
	\end{equation} 
\end{lemma}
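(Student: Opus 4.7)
The plan is to split the argument at the larger initial time $\mathfrak{t}$, treating $X^{t,x}_s - X^{\mathfrak{t},\mathfrak{x}}_s$ on $[\mathfrak{t},T]$ as the solution of the difference SDE with random initial datum $X^{t,x}_{\mathfrak{t}} - \mathfrak{x}$ at time $\mathfrak{t}$. I will run the same Gronwall-type argument twice: first on $[\mathfrak{t},s]$ to propagate this initial discrepancy, and then on $[t,\mathfrak{t}]$ to quantify how far $X^{t,x}$ can drift from $\mathfrak{x}$ during the waiting period. Multiplying the two resulting estimates produces exactly the product structure on the right-hand side of \eqref{stability_for_sdes:claim}: the factor $[\norm{x-\mathfrak{x}}+|t-\mathfrak{t}|^{1/2}]^2\,[1+\sqrt{T}\sup_r\norm{\mu(r,\mathfrak{x})}+\sup_r\HSnorm{\sigma(r,\mathfrak{x})}]^2$ arises from the second step, while the exponential arises from both steps combined.

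For the first step, let $e(s) := \Exp{\norm{X^{t,x}_s - X^{\mathfrak{t},\mathfrak{x}}_s}^2}^{1/2}$ for $s \in [\mathfrak{t},T]$. Subtracting \eqref{stability_for_sdes:sde} applied to $(\mathfrak{t},\mathfrak{x})$ from \eqref{stability_for_sdes:sde} applied to $(t,x)$ and then invoking Minkowski's inequality, It\^o's isometry, and the Lipschitz estimate \eqref{stability_for_sdes:lipschitz_constant} yields $e(s) \leq e(\mathfrak{t}) + L\int_{\mathfrak{t}}^s e(r)\,dr + L(\int_{\mathfrak{t}}^s e(r)^2\,dr)^{1/2}$. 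Squaring via $(a+b+c)^2 \leq 3(a^2+b^2+c^2)$ and bounding $(\int_{\mathfrak{t}}^s e(r)\,dr)^2 \leq T\int_{\mathfrak{t}}^s e(r)^2\,dr$ by Cauchy--Schwarz then gives $e(s)^2 \leq 3 e(\mathfrak{t})^2 + 3L^2(T+1)\int_{\mathfrak{t}}^s e(r)^2\,dr$, after which Gronwall's inequality delivers $e(s)^2 \leq 3 e(\mathfrak{t})^2 \exp(3L^2T(T+1))$.

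For the second step, I introduce $\phi(u) := \Exp{\norm{X^{t,x}_u - \mathfrak{x}}^2}^{1/2}$ for $u \in [t,\mathfrak{t}]$ and use the SDE \eqref{stability_for_sdes:sde} for $X^{t,x}_u - \mathfrak{x}$ together with the decompositions $\mu(r,X^{t,x}_r) = \mu(r,\mathfrak{x}) + [\mu(r,X^{t,x}_r) - \mu(r,\mathfrak{x})]$ and $\sigma(r,X^{t,x}_r) = \sigma(r,\mathfrak{x}) + [\sigma(r,X^{t,x}_r) - \sigma(r,\mathfrak{x})]$. Applying Minkowski, It\^o's isometry, and \eqref{stability_for_sdes:lipschitz_constant} once more yields $\phi(u) \leq \alpha + L\int_t^u \phi(r)\,dr + L(\int_t^u \phi(r)^2\,dr)^{1/2}$ with $\alpha := \norm{x-\mathfrak{x}} + (\mathfrak{t}-t)\sup_{r\in[0,T]}\norm{\mu(r,\mathfrak{x})} + (\mathfrak{t}-t)^{1/2}\sup_{r\in[0,T]}\HSnorm{\sigma(r,\mathfrak{x})}$, and the same squaring/Gronwall recipe delivers $e(\mathfrak{t})^2 = \phi(\mathfrak{t})^2 \leq 3\alpha^2 \exp(3L^2T(T+1))$. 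Combining with the first step and using the elementary bound $\alpha \leq [\norm{x-\mathfrak{x}} + |\mathfrak{t}-t|^{1/2}]\,[1 + \sqrt{T}\sup_r\norm{\mu(r,\mathfrak{x})} + \sup_r\HSnorm{\sigma(r,\mathfrak{x})}]$, which follows from $(\mathfrak{t}-t) \leq \sqrt{T}\,(\mathfrak{t}-t)^{1/2}$, finally produces \eqref{stability_for_sdes:claim} with the stated constants $9$ and $\exp(6L^2T(T+1))$.

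The main (and essentially only) technical obstacle is the a priori finiteness needed to invoke Gronwall's inequality, i.e.\ the boundedness of $e$ and $\phi$ on the relevant intervals. This is immediate here because the compact support of $\mu$ and $\sigma$ makes them uniformly bounded, so an analogous Minkowski/It\^o-isometry estimate applied once more yields $\sup_{s\in[t,T]}\Exp{\norm{X^{t,x}_s}^2}<\infty$, whence $e$ and $\phi$ are bounded as well. Everything else is pure bookkeeping; the factorization of the constants ($9 = 3\cdot 3$ outside and $6L^2T(T+1) = 3L^2T(T+1) + 3L^2T(T+1)$ in the exponent) simply mirrors the two independent applications of the Gronwall lemma.
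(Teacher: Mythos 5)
Your proof is correct and follows essentially the same route as the paper: a first Gronwall estimate on $[\mathfrak t,s]$ for $e(s)=\Exp{\norm{X^{t,x}_s-X^{\mathfrak t,\mathfrak x}_s}^2}^{1/2}$, a second Gronwall estimate on $[t,\mathfrak t]$ for $\Exp{\norm{X^{t,x}_{\mathfrak t}-\mathfrak x}^2}$ via the decomposition $\mu(r,X^{t,x}_r)=\mu(r,\mathfrak x)+[\mu(r,X^{t,x}_r)-\mu(r,\mathfrak x)]$, and the same Minkowski/It\^o-isometry/Lipschitz bookkeeping with the $3\cdot3=9$ and $3L^2T(T+1)+3L^2T(T+1)$ constant structure. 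The only minor variation is your a priori $L^2$-integrability argument, which you derive directly from the boundedness of the compactly supported coefficients rather than by extending $\mu,\sigma$ to $\R^d$ and citing the Karatzas--Shreve existence theorem as the paper does; both routes are fine and yield \eqref{stability_for_sdes:l2_integrability}.
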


\begin{proof}[Proof of 
	\cref{stability_for_sdes}]
Throughout this proof let 
	$\mathfrak{m}\colon [0,T]\times\R^d \to \R^d$ 
and 
	$\mathfrak{s}\colon [0,T]\times\R^d \to \R^{d\times m}$ 
satisfy for all 
	$t\in [0,T]$, 
	$x\in \R^d$ 
that 
	\begin{equation}
	\mathfrak{m}(t,x) 
	= 
	\begin{cases} 
	\mu(t,x) & \colon x\in\cO \\
	0 		 & \colon x\in\R^d\setminus\cO
	\end{cases} 
	\qandq 
	\mathfrak{s}(t,x)
	= 
	\begin{cases} 
	\sigma(t,x) & \colon x\in\cO \\
	0 		    & \colon x\in\R^d\setminus\cO. 
	\end{cases}
	\end{equation}
Observe that \eqref{stability_for_sdes:lipschitz_constant}
ensures that 
	$\mathfrak{m} \colon [0,T]\times\R^d \to \R^d$ 
and 
	$\mathfrak{s}\colon [0,T]\times\R^d \to \R^{d\times m}$ 
are compactly supported continuous functions which satisfy for all 
	$ t \in [0,T] $, 
	$ x,y \in \R^d $ 
that 
	\begin{equation}
	\norm{\mathfrak{m}(t,x)-\mathfrak{m}(t,y)} 
	+ 
	\HSNorm{\mathfrak{s}(t,x)-\mathfrak{s}(t,y)} 
	\leq 
	L \norm{x-y} . 
	\end{equation}
Karatzas \& Shreve~\cite[Theorem 
5.2.9]{KaSh1991_BrownianMotionAndStochasticCalculus} hence guarantees 
for every 
	$t\in [0,T]$, 
	$x\in \cO$
that there exists an $(\F_s)_{s\in [t,T]}$-adapted stochastic process 
	$
	\mathfrak{X}^{t,x}=(\mathfrak{X}^{t,x}_s)_{s\in [t,T]}\colon [t,T]\times\Omega \to \R^d 
	$
with continuous sample paths 
\begin{enumerate}[(A)] 
	\item 
which satisfies that 
	$ \sup_{s\in [t,T]} \Exp{\Norm{\mathfrak{X}^{t,x}_s}^2} < \infty $
and 
	\item 
which satisfies that for all 
	$
	s\in [t,T]
	$
it holds $\P$-a.s.~that 
	\begin{equation} 
	\mathfrak{X}^{t,x}_s 
	= 
	x 
	+ 
	\int_t^s 
	\mathfrak{m}(r,\mathfrak{X}^{t,x}_r)\,dr 
	+ 
	\int_t^s 
	\mathfrak{s}(r,\mathfrak{X}^{t,x}_r)\,dW_r. 
	\end{equation} 
\end{enumerate}
This, Karatzas \& Shreve~\cite[Theorem 
5.2.5]{KaSh1991_BrownianMotionAndStochasticCalculus}, and 
\eqref{stability_for_sdes:sde} ensure that for all 
	$t\in [0,T]$, 
	$x\in \cO$ 
it holds that 
	\begin{equation} 
	\P\!\left( 
	\forall s\in[t,T]\colon X^{t,x}_s = \mathfrak{X}^{t,x}_s 
	\right) = 1. 
	\end{equation}
Combining this with the fact that for all 
	$ t \in [0,T] $, 
	$ x \in \cO $ 
it holds that 	
	$ \sup_{s\in [t,T]} \EXP{\Norm{\mathfrak{X}^{t,x}_s}^2} < \infty $
implies that for all 
	$ t \in [0,T] $, 
	$ x \in \cO $
it holds that 
	\begin{equation} \label{stability_for_sdes:l2_integrability}
	\sup_{s\in [t,T]}
	\Exp{\norm{X^{t,x}_s}^2} 
	< \infty . 
	\end{equation}
Next note that \eqref{stability_for_sdes:sde} ensures that for all 
	$t\in [0,T]$, 
	$\mathfrak{t}\in [t,T]$, 
	$s\in [\mathfrak{t},T]$, 
	$x,\mathfrak{x}\in \cO$ 
it holds $\P$-a.s.~that
	\begin{equation}
	\label{stability_for_sdes:difference_equation}
	\begin{split}
	X^{t,x}_{s}
	-
	X^{\mathfrak{t},\mathfrak{x}}_{s} 
	&
	= 
	X^{t,x}_{\mathfrak{t}} - \mathfrak{x} 
	+ 
	\int_{\mathfrak{t}}^s 
	\left( 
	\mu( r, X^{t,x}_{r} ) 
	-
	\mu( r, X^{\mathfrak{t},\mathfrak{x}}_{r} ) 
	\right)\! \,dr
	+ 
	\int_{\mathfrak{t}}^s 
	\left( 
	\sigma(r, X^{t,x}_r ) 
	- 
	\sigma(r, X^{\mathfrak{t},\mathfrak{x}}_r ) 
	\right)\!\,dW_r . 
	\end{split}
	\end{equation}
Minkowski's inequality hence yields that for all 
	$t\in [0,T]$, 
	$\mathfrak{t}\in [t,T]$, 
	$s\in [\mathfrak{t},T]$, 
	$x,\mathfrak{x}\in\cO$ 
it holds that 
	\begin{equation} 
	\begin{split}
	\left|
	\Exp{\norm{X^{t,x}_{s}
			-
			X^{\mathfrak{t},\mathfrak{x}}_{s}}^2}
	\right|^{\nicefrac12}
	\leq 
	\left|
	\Exp{\norm{X^{t,x}_{\mathfrak{t}} - \mathfrak{x}}^2}
	\right|^{\nicefrac12}
	& + 
	\int_{\mathfrak{t}}^s 
	\left|\Exp{
		\norm{\mu( r, X^{t,x}_{r} ) 
			-
			\mu( r, X^{\mathfrak{t},\mathfrak{x}}_{r} )}^2}
	\right|^{\nicefrac12}
	\,dr
	\\
	& + 
	\left|\Exp{\norm{
			\int_{\mathfrak{t}}^s 
			\left( 
			\sigma(r, X^{t,x}_r ) 
			- 
			\sigma(r, X^{\mathfrak{t},\mathfrak{x}}_r ) 
			\right)\!\,dW_r}^2}\right|^{\nicefrac12} .
	\end{split}
	\end{equation}
It\^o's isometry and \eqref{stability_for_sdes:lipschitz_constant} therefore ensure that for all 
	$t\in [0,T]$, 
	$\mathfrak{t}\in [t,T]$, 
	$s\in [\mathfrak{t},T]$, 
	$x,\mathfrak{x}\in\cO$ 
it holds that  
	\begin{equation}
	\begin{split}
	\left|
	\Exp{\norm{X^{t,x}_{s}
			-
			X^{\mathfrak{t},\mathfrak{x}}_{s}}^2}
	\right|^{\nicefrac12}
	\leq 
	\left|
	\Exp{\norm{X^{t,x}_{\mathfrak{t}} - \mathfrak{x}}^2}
	\right|^{\nicefrac12}
	& + 
	L\int_{\mathfrak{t}}^s 
	\left|\Exp{
		\norm{X^{t,x}_{r} - X^{\mathfrak{t},\mathfrak{x}}_{r} }^2}
	\right|^{\nicefrac12}
	\,dr
	\\
	& + 
	\left|
	\int_{\mathfrak{t}}^s
	\Exp{\HSnorm{ 
			\left( 
			\sigma(r, X^{t,x}_r ) 
			- 
			\sigma(r, X^{\mathfrak{t},\mathfrak{x}}_r ) 
			\right)}^2}\!\,dr
	\right|^{\nicefrac12}.
	\end{split}
	\end{equation}
This and 
\eqref{stability_for_sdes:lipschitz_constant} imply 
for all 
	$t\in [0,T]$, 
	$\mathfrak{t}\in [t,T]$, 
	$s\in [\mathfrak{t},T]$, 
	$x,\mathfrak{x}\in \cO$ 
that 
	\begin{equation} 
	\begin{split} 
	& 
	\left|
	\Exp{\norm{X^{t,x}_{s}
			-
			X^{\mathfrak{t},\mathfrak{x}}_{s}}^2}
	\right|^{\nicefrac12}
	\\
	& \leq 
	\left|
	\Exp{\norm{X^{t,x}_{\mathfrak{t}} - \mathfrak{x}}^2}
	\right|^{\nicefrac12}
	+ 
	L\int_{\mathfrak{t}}^s 
	\left|\Exp{
		\norm{X^{t,x}_{r} - X^{\mathfrak{t},\mathfrak{x}}_{r} }^2}
	\right|^{\nicefrac12}
	\,dr
	+ 
	L 
	\left|
	\int_{\mathfrak{t}}^s
	\Exp{\norm{ 
			X^{t,x}_r - X^{\mathfrak{t},\mathfrak{x}}_r }^2}\!\,dr
	\right|^{\nicefrac12}.
	\end{split}
	\end{equation} 
The fact that for all 
	$a,b,c\in [0,\infty)$ 
it holds that 
	$(a+b+c)^2 \leq 3 (a^2+b^2+c^2)$ 
and H\"older's inequality therefore ensure that for all
	$t\in [0,T]$, 
	$\mathfrak{t}\in [t,T]$, 
	$s\in [\mathfrak{t},T]$, 
	$x,\mathfrak{x}\in \cO$ 
it holds that  
	\begin{equation} 
	\begin{split}
	& \Exp{\norm{X^{t,x}_{s}-X^{\mathfrak{t},\mathfrak{x}}_{s}}^2}
	\\
	& \leq 
	3
	\Exp{\norm{X^{t,x}_{\mathfrak{t}} - \mathfrak{x}}^2}
	+ 
	3L^2T\int_{\mathfrak{t}}^s 
	\Exp{
		\norm{X^{t,x}_{r} - X^{\mathfrak{t},\mathfrak{x}}_{r}}^2}\!\,dr
	+ 
	3L^2 
	\int_{\mathfrak{t}}^s
	\Exp{\norm{ 
			X^{t,x}_r - X^{\mathfrak{t},\mathfrak{x}}_r }^2}\!\,dr
	\\
	& = 
	3
	\Exp{\norm{X^{t,x}_{\mathfrak{t}} - \mathfrak{x}}^2}
	+ 
	3L^2(T+1)
	\int_{\mathfrak{t}}^s
	\Exp{\norm{ 
			X^{t,x}_r - X^{\mathfrak{t},\mathfrak{x}}_r }^2}\!\,dr
	.
	\end{split}
	\end{equation} 
Gronwall's inequality and \eqref{stability_for_sdes:l2_integrability} hence imply for all 
	$t\in [0,T]$, 
	$\mathfrak{t}\in [t,T]$, 
	$s\in [\mathfrak{t},T]$, 
	$x,\mathfrak{x}\in \cO$ 
that 
	\begin{equation}
	\label{stability_for_sdes:first_estimate}
	\Exp{\norm{X^{t,x}_{s}-X^{\mathfrak{t},\mathfrak{x}}_{s}}^2}
	\leq 
	3
	\Exp{\norm{X^{t,x}_{\mathfrak{t}} - \mathfrak{x}}^2}
	\exp\!\left(3L^2T(T+1)\right).
	\end{equation}
In the next step we observe that \eqref{stability_for_sdes:sde} guarantees that for all 
	$t\in [0,T]$, 
	$\mathfrak{t}\in [t,T]$, 
	$x,\mathfrak{x}\in\cO$ 
it holds $\P$-a.s.~that 
	\begin{equation}
	X^{t,x}_{\mathfrak{t}} - \mathfrak{x} 
	= 
	x - \mathfrak{x} 
	+ 
	\int_t^{\mathfrak{t}} 
	\mu(r, X^{t,x}_r ) \,dr
	+ 
	\int_t^{\mathfrak{t}}
	\sigma(r, X^{t,x}_r ) \,dW_r. 
	\end{equation}
Minkowski's inequality hence demonstrates that for all 
	$t\in [0,T]$, 
	$\mathfrak{t}\in [t,T]$, 
	$x,\mathfrak{x}\in \cO$ 
	it holds that 
	\begin{equation}
	\left|\Exp{\norm{X^{t,x}_{\mathfrak{t}} - \mathfrak{x}}^2}\right|^{\nicefrac12}
	\leq 
	\norm{x - \mathfrak{x}}
	+ 
	\int_t^{\mathfrak{t}} 
	\left|\Exp{\norm{\mu(r, X^{t,x}_r )}^2}\right|^{\nicefrac12} \,dr
	+ 
	\left|\Exp{\norm{\int_t^{\mathfrak{t}}
			\sigma(r, X^{t,x}_r ) \,dW_r}^2}\right|^{\nicefrac12}. 
	\end{equation} 
It\^o's isometry therefore implies that for all 
	$t\in [0,T]$, 
	$\mathfrak{t}\in [t,T]$, 
	$x,\mathfrak{x}\in \cO$ 
it holds that 
	\begin{equation}
	\left|\Exp{\norm{X^{t,x}_{\mathfrak{t}} - \mathfrak{x}}^2}\right|^{\nicefrac12}
	\leq 
	\norm{x - \mathfrak{x}}
	+ 
	\int_t^{\mathfrak{t}} 
	\left|\Exp{\norm{\mu(r, X^{t,x}_r )}^2}\right|^{\nicefrac12} \,dr
	+ 
	\left|\int_t^{\mathfrak{t}}\Exp{\HSnorm{
			\sigma(r, X^{t,x}_r ) }^2}\!\,dr\right|^{\nicefrac12}.
	\end{equation}
This, Minkowski's inequality, and \eqref{stability_for_sdes:lipschitz_constant} yield that for all 
	$t\in [0,T]$, 
	$\mathfrak{t}\in [t,T]$, 
	$x,\mathfrak{x}\in \cO$ 
it holds that 	
	\begin{equation} 
	\begin{split}
	\left|\Exp{\norm{X^{t,x}_{\mathfrak{t}} - \mathfrak{x}}^2}\right|^{\nicefrac12}
	& \leq 
	\norm{x - \mathfrak{x}}
	+ 
	\int_t^{\mathfrak{t}} 
	\norm{\mu(r,\mathfrak{x})}\,dr 
	+ 
	L
	\int_t^{\mathfrak{t}}
	\left|\Exp{\norm{X^{t,x}_r-\mathfrak{x}}^2}\right|^{\nicefrac12} \,dr
	\\
	& + 
	\left|
	\int_t^{\mathfrak{t}} \HSnorm{\sigma(r,\mathfrak{x})}^2\,dr
	\right|^{\nicefrac12}
	+ 
	L 
	\left|
	\int_t^{\mathfrak{t}}
		\Exp{
		\norm{X^{t,x}_r-\mathfrak{x}}^2}\!\,dr\right|^{\nicefrac12}.
	\end{split}
	\end{equation}
The fact that for all 
	$a,b,c\in [0,\infty)$
it holds that 
	$(a+b+c)^2 \leq 3(a^2+b^2+c^2)$
hence implies for all 
	$t\in [0,T]$, 
	$\mathfrak{t}\in [t,T]$, 
	$x,\mathfrak{x}\in\cO$ 
that
	\begin{equation} 
	\begin{split}
	& \Exp{\norm{X^{t,x}_{\mathfrak{t}} - \mathfrak{x}}^2}
	\\
	& \leq 
	3
	\left[ 
	\norm{x - \mathfrak{x}}
	+ 
	\int_t^{\mathfrak{t}} 
	\norm{\mu(r,\mathfrak{x})}\,dr 
	+ 
	\left|
	\int_t^{\mathfrak{t}} \HSnorm{\sigma(r,\mathfrak{x})}^2\,dr
	\right|^{\nicefrac12}
	\right]^2
	+ 
	3L^2(T+1)
	\int_t^{\mathfrak{t}}
	\Exp{\norm{X^{t,x}_r-\mathfrak{x}}^2}\!\,dr.   
	\end{split}
	\end{equation}
This demonstrates for all 
	$t\in [0,T]$, 
	$\mathfrak{t}\in [t,T]$, 
	$x,\mathfrak{x}\in \cO$ 
that 
	\begin{multline}
	\Exp{\norm{X^{t,x}_{\mathfrak{t}} - \mathfrak{x}}^2}
	\leq 
	3 \left[ 
	\norm{x-\mathfrak{x}} 
	+ 
	|\mathfrak{t}-t|
	\sup_{r\in [0,T]} \norm{\mu(r,\mathfrak{x})}
	+ 
	|\mathfrak{t}-t|^{\nicefrac{1}{2}}
	\sup_{r\in [0,T]} \HSnorm{\sigma(r,\mathfrak{x})}
	\right]^2
	\\+ 
	3L^2(T+1)
	\int_t^{\mathfrak{t}}
	\Exp{\norm{X^{t,x}_r-\mathfrak{x}}^2}\!\,dr .
	\end{multline}
Hence, we obtain for all 
	$t\in [0,T]$, 
	$\mathfrak{t}\in [t,T]$, 
	$x,\mathfrak{x}\in\cO$
that 
	\begin{multline} 
	\Exp{\norm{X^{t,x}_{\mathfrak{t}} - \mathfrak{x}}^2}
	\leq 
	3 
	\left[ 
	\norm{x-\mathfrak{x}} + |t-\mathfrak{t}|^{\nicefrac{1}{2}} 
	\right]^2 
	\left[ 1 + \sqrt{T} \sup_{r\in [0,T]} \norm{\mu(r,\mathfrak{x})} 
	+ \sup_{r\in [0,T]} \HSnorm{\sigma(r,\mathfrak{x})} \right]^2 
	\\
	+ 3L^2(T+1)
	\int_t^{\mathfrak{t}}
	\Exp{\norm{X^{t,x}_r-\mathfrak{x}}^2}\!\,dr .
	\end{multline} 
	Gronwall's inequality and 
\eqref{stability_for_sdes:l2_integrability} hence ensure that for all 
	$t\in [0,T]$, 
	$\mathfrak{t}\in [t,T]$, 
	$x,\mathfrak{x}\in\cO$ 
	it holds that 
	\begin{equation} 
	\begin{split}
	&
	\Exp{\norm{X^{t,x}_{\mathfrak{t}} - \mathfrak{x}}^2}
	\\
	& \leq 
	3 
	\left[ 
	\norm{x-\mathfrak{x}} + |t-\mathfrak{t}|^{\nicefrac{1}{2}} 
	\right]^2 
	\left[ 1 + \sqrt{T} \sup_{r\in [0,T]} \norm{\mu(r,\mathfrak{x})} 
	+ \sup_{r\in [0,T]} \HSnorm{\sigma(r,\mathfrak{x})} \right]^2 
	\exp\!\left(3L^2T(T+1)\right). 
	\end{split}
	\end{equation} 
Combining this with  \eqref{stability_for_sdes:first_estimate} demonstrates \eqref{stability_for_sdes:claim}. The proof of \cref{stability_for_sdes} is thus completed. 
\end{proof}

\subsection{Existence and uniqueness properties for solutions of SFPEs associated with SDEs}
\label{subsec:SFEs_and_SDEs}

In this section we provide the announced application of \cref{abstract_existence} (see \cref{existence_sde_setting} below). The next essentially well-known result, \cref{stochastic_continuity_lemma} below (cf., for example, Liu \& R\"ockner~\cite[Proposition 3.2.1]{LiuRoeckner2015SPDEs}), ascertains that the stochastic continuity hypothesis of \cref{abstract_existence} is satisfied in the setting of \cref{existence_sde_setting}. 

\begin{lemma}
\label{stochastic_continuity_lemma}
Let 
	$d,m\in\N$, 
	$T\in (0,\infty)$, 
let 
	$\langle\cdot,\cdot\rangle\colon\R^d\times\R^d\to\R$ 
	be the standard scalar product on $\R^d$, 
let 
	$\norm{\cdot}\colon\R^d\to [0,\infty)$ 
	be the standard norm on $\R^d$, 
let 
	$\HSnorm{\cdot}\colon\R^{d\times m}\to [0,\infty)$ be the Frobenius norm on $\R^{d\times m}$, 
let 
	$\cO\subseteq\R^d$ be a non-empty open set, 
for every 
	$r\in (0,\infty)$ 
let 
	$O_r\subseteq\cO$ 
	satisfy 
	$O_r = \{x\in\cO\colon\norm{x}\leq r~\text{and}~\{y\in\R^d\colon\norm{y-x}<\nicefrac{1}{r}\}\subseteq\cO\}$, 
let 
	$\mu\in C([0,T]\times\cO,\R^d)$, 
	$\sigma\in C([0,T]\times\cO,\R^{d\times m})$ 
	satisfy for all 
		$r\in (0,\infty)$ 
	that
\begin{equation}\label{stochastic_continuity_lemma:locally_lipschitz}
\sup\left(
\left\{
\frac{
	\norm{ 
		\mu(t,x) - \mu(t,y) } 
	+ 
	\HSnorm{
		\sigma(t,x) - \sigma(t,y) }
}
{\norm{ x - y }}
\colon 
t\in [0,T], 
x,y\in O_r, 
x\neq y
\right\}
\cup \{0\}
\right)
< 
\infty, 
\end{equation}
let 
	$V\in C^{1,2}([0,T]\times\cO,(0,\infty))$ 
satisfy for all 
	$t\in [0,T]$, 
	$x\in \cO$ 
that  
\begin{equation}
(\tfrac{\partial V}{\partial t})(t,x) 
+ 
\tfrac12 
\operatorname{Trace}\!\left( 
\sigma(t,x)[\sigma(t,x)]^{*}
(\operatorname{Hess}_x V)(t,x)
\right) 
+ 
\langle 
\mu(t,x),
(\nabla_x V)(t,x) 
\rangle
\leq 0,  
\end{equation}
assume that 
	$
	\sup_{r\in (0,\infty)} 
	[\inf_{t\in [0,T]} 
	\inf_{x\in \cO\setminus O_r} 
	V(t,x)
	] 
	= 
	\infty
	$, 	
let 
	$(\Omega,\mathcal{F},\P,
	(\mathbb{F}_t)_{t\in [0,T]})$ 
be a filtered probability space which satisfies the usual conditions, 
let 
	$W\colon [0,T]\times\Omega\to\R^m$ 
be a standard 
	$(\mathbb{F}_t)_{t\in [0,T]}$-Brownian motion,
and for every 
	$t\in [0,T]$, 
	$x\in\cO$ 
let 
	$ 
	X^{t,x} = 
	(X^{t,x}_s)_{s\in [t,T]} \colon [t,T]\times\Omega\to\cO$ 
be an $(\mathbb{F}_s)_{s\in [t,T]}$-adapted stochastic process with continuous sample paths which satisfies that for all 
	$s\in [t,T]$ 
it holds $\P$-a.s.\ that 
	\begin{equation}
	X^{t,x}_s 
	= 
	x 
	+
	\int_t^s 
	\mu\big( r, X^{t,x}_r \big) 
	\,dr 
	+ 
	\int_t^s 
	\sigma\big( r, X^{t,x}_{r} \big) \,dW_r.   
	\end{equation}
Then it holds for all 
	$\varepsilon\in (0,\infty)$, 
	$s\in [0,T]$ 
and all
	$(t_n,x_n)\in [0,T]\times\cO$, $n\in\N_0$, 
with
	$\limsup_{n\to\infty}[|t_n-t_0|+\norm{x_n-x_0}]=0$ 
that 
	\begin{equation} 
	 \limsup_{n\to\infty} 
	 \left[ 
	 \P\!\left(  
	 	\Normm{X^{t_n,x_n}_{\max\{s,t_n\}} - X^{t_0,x_0}_{\max\{s,t_0\}}}
		\geq
		\varepsilon
	 \right)
	 \right] = 0. 
	\end{equation} 
\end{lemma}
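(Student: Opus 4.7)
The strategy is a truncation-localization argument: replace $\mu,\sigma$ by compactly supported globally Lipschitz coefficients, apply \cref{stability_for_sdes} in this new setting, and then transfer the result back to $X^{t,x}$ via \cref{coinciding_until_stopping_times} and a Lyapunov-based tightness bound. I would first fix $\varepsilon\in(0,\infty)$, $s\in[0,T]$, and a convergent sequence $(t_n,x_n)\to(t_0,x_0)$ in $[0,T]\times\cO$, and abbreviate $s_n:=\max\{s,t_n\}$ so that $s_n\to s_0$. After dropping finitely many terms, one may fix $r_0\in(0,\infty)$ with $\{x_n\}_{n\in\N_0}\subseteq O_{r_0}$, whence continuity of $V$ yields $C:=\sup_{n\in\N_0}V(t_n,x_n)<\infty$. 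For every $r\geq r_0+1$ I pick $\chi_r\in C([0,T]\times\R^d,[0,1])$ with $\chi_r|_{[0,T]\times O_r}\equiv 1$ and $\operatorname{supp}(\chi_r)$ a compact subset of $[0,T]\times\cO$; then $\mu_r:=\chi_r\mu$ and $\sigma_r:=\chi_r\sigma$ (extended by $0$ to $[0,T]\times\R^d$) are continuous, compactly supported, and globally Lipschitz in the spatial variable thanks to \eqref{stochastic_continuity_lemma:locally_lipschitz}. Standard SDE theory on $\R^d$ then provides for each $(t,x)$ a pathwise unique $(\mathbb{F}_u)_{u\in[t,T]}$-adapted continuous strong solution $Y^{t,x,r}$ of the SDE driven by $\mu_r,\sigma_r$ with $Y^{t,x,r}_t=x$.

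A time-shifted version of \cref{coinciding_until_stopping_times} (applied on $[t,T]$ with $\cC=[0,T]\times O_r$ after extending $\mu,\sigma$ from $\cO$ to $\R^d$ in a locally Lipschitz fashion matching $\mu_r,\sigma_r$ on $\cC$) combined with $X^{t,x}_t=x=Y^{t,x,r}_t$ yields, for the stopping time $\tau^{t,x,r}:=\inf(\{u\in[t,T]\colon X^{t,x}_u\notin O_r\}\cup\{T\})$,
\begin{equation}
\P\Big(\forall\, u\in[t,T]\colon \mathbbm{1}_{\{u\leq\tau^{t,x,r}\}}\,\big\|X^{t,x}_u-Y^{t,x,r}_u\big\|=0\Big)=1.
\end{equation}
A time-shifted version of \cref{integral_estimate_through_lyapunov_function}, applied to $X^{t,x}$ with stopping time $\tau^{t,x,r}$, gives $\Exp{V(\tau^{t,x,r},X^{t,x}_{\tau^{t,x,r}})}\leq V(t,x)$. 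Since $O_r$ is closed in $\R^d$ and $X^{t,x}$ has continuous sample paths, on $\{\tau^{t,x,r}<T\}$ one has $X^{t,x}_{\tau^{t,x,r}}\in\partial O_r$; because $O_r\subseteq\cO$ and $\cO$ is open, a short computation gives $\partial O_r\subseteq\overline{\cO\setminus O_r}$, so continuity of $V$ implies $V(\tau^{t,x,r},X^{t,x}_{\tau^{t,x,r}})\geq\alpha_r$ on this event, where $\alpha_r:=\inf_{(u,y)\in[0,T]\times(\cO\setminus O_r)}V(u,y)\to\infty$ by hypothesis. Markov's inequality therefore yields
\begin{equation}
\P(\tau^{t_n,x_n,r}<T)\leq\frac{V(t_n,x_n)}{\alpha_r}\leq\frac{C}{\alpha_r}\xrightarrow[r\to\infty]{}0\quad\text{uniformly in }n\in\N_0.
\end{equation}

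For fixed large $r$, applying \cref{stability_for_sdes} to $Y^{\cdot,\cdot,r}$ (with $\R^d$ in the role of $\cO$), combined with the elementary $L^2$-bound $\Exp{\|Y^{t,x,r}_u-Y^{t,x,r}_v\|^2}\leq K_r|u-v|$ (valid because $\mu_r,\sigma_r$ are bounded) used to bridge the gap between $s_n$, $\max\{s_n,s_0\}$, and $s_0$, produces $\lim_{n\to\infty}\Exp{\|Y^{t_n,x_n,r}_{s_n}-Y^{t_0,x_0,r}_{s_0}\|^2}=0$. On the event $\{\tau^{t_n,x_n,r}=T\}\cap\{\tau^{t_0,x_0,r}=T\}$ the locality identity forces $X^{t_n,x_n}_{s_n}=Y^{t_n,x_n,r}_{s_n}$ and $X^{t_0,x_0}_{s_0}=Y^{t_0,x_0,r}_{s_0}$, so a union bound delivers
\begin{equation}
\P\!\left(\big\|X^{t_n,x_n}_{s_n}-X^{t_0,x_0}_{s_0}\big\|\geq\varepsilon\right)\leq\P(\tau^{t_n,x_n,r}<T)+\P(\tau^{t_0,x_0,r}<T)+\P\!\left(\big\|Y^{t_n,x_n,r}_{s_n}-Y^{t_0,x_0,r}_{s_0}\big\|\geq\varepsilon\right)\!.
\end{equation}
Taking $\limsup_{n\to\infty}$ kills the last term, and then $r\to\infty$ sends the first two uniformly to zero, completing the argument. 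The main technical obstacles will be the boundary analysis establishing $V\geq\alpha_r$ on $\partial O_r$ (needed for exit-time tightness) and the bookkeeping required to run \cref{coinciding_until_stopping_times} and \cref{integral_estimate_through_lyapunov_function} starting at time $t$ rather than $0$ and to extend $\mu,\sigma$ coherently from $\cO$ to $\R^d$.
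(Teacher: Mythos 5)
Your overall strategy — truncate $\mu,\sigma$ to compactly supported globally Lipschitz coefficients, get coincidence with $X^{t,x}$ via \cref{coinciding_until_stopping_times} and tightness via \cref{integral_estimate_through_lyapunov_function}, then apply \cref{stability_for_sdes} and conclude by a union bound — is exactly the paper's. The paper uses sublevel sets $\{V\leq n\}$ where you use $O_r$; both work, your version just needs the extra step $\partial O_r\subseteq\overline{\cO\setminus O_r}$, which you supply. However, there is a genuine gap in how you invoke \cref{coinciding_until_stopping_times}.

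You define $\tau^{t,x,r}:=\inf(\{u\in[t,T]\colon X^{t,x}_u\notin O_r\}\cup\{T\})$, the exit time of $X^{t,x}$ alone, and then assert that Lemma~\ref{coinciding_until_stopping_times} yields $\P(\forall u\colon\mathbbm{1}_{\{u\leq\tau^{t,x,r}\}}\|X^{t,x}_u-Y^{t,x,r}_u\|=0)=1$. But that lemma's stopping time is the first time \emph{either} process leaves $\cC$, and its conclusion gives coincidence only up to that \emph{joint} exit time $\rho^{t,x,r}=\min\{\tau^{t,x,r},\inf\{u\colon Y^{t,x,r}_u\notin O_r\}\}\leq\tau^{t,x,r}$. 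Your subsequent union-bound step relies on the stronger claim: on $\{\tau^{t_n,x_n,r}=T\}$ (i.e.\ $X^{t_n,x_n}$ stays in $O_r$) you need $X^{t_n,x_n}_{s_n}=Y^{t_n,x_n,r}_{s_n}$, which would require coincidence up to $s_n$ and in particular $Y^{t_n,x_n,r}$ to also stay in $O_r$ up to $s_n$ — but this is not what the lemma delivers, and in general, after the two paths touch $\partial O_r$ simultaneously, $Y$ may leave $O_r$ while $X$ re-enters (the coefficients disagree just outside $O_r$, so pathwise agreement breaks there). The paper avoids this by taking the stopping time to be the \emph{joint} exit time and then using the a.s.\ coincidence at that time to conclude $V(\tau,X_\tau)\geq k$ on $\{\tau<T\}$, which feeds into the Markov/Lyapunov estimate. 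The fix for your argument is the same: replace $\tau^{t,x,r}$ by the joint exit time $\rho^{t,x,r}$; then coincidence at $\rho^{t,x,r}$ gives $X_{\rho}=Y_{\rho}\in\partial O_r$ on $\{\rho<T\}$, so $V(\rho,X_\rho)\geq\alpha_r$ there and $\P(\rho<T)\leq V(t,x)/\alpha_r$ still holds, and the union bound becomes legitimate. Two further (minor) points you gloss over: the cutoffs $\chi_r$ must be chosen Lipschitz (not merely continuous) for $\mu_r,\sigma_r$ to be globally Lipschitz, and instead of extending $\mu,\sigma$ to $\R^d$ you can, as the paper does, use \cref{sde_compactly_supported} to keep $Y^{t,x,r}$ inside $\cO$ and apply \cref{coinciding_until_stopping_times} on $\cO$ directly.
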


\begin{proof}[Proof of \cref{stochastic_continuity_lemma}]
Throughout this proof let 
 	$(\mathfrak{t}_n,\mathfrak{x}_n)\in [0,T]\times\cO$, $n\in\N_0$, 
satisfy 
	$\limsup_{n\to\infty} [|\mathfrak{t}_n-\mathfrak{t}_0| + \norm{\mathfrak{x}_n-\mathfrak{x}_0}]=0$. 
Note that for the proof of~\cref{stochastic_continuity_lemma} it is 
sufficient to demonstrate that for all 
	$\varepsilon \in (0,\infty)$, 
	$s\in [0,T]$ 
it holds that
	\begin{equation}
	\label{stochastic_continuity_lemma:suffices_to_show}
	\limsup_{n\to\infty} 
	\left[
	\P\!\left(\Normm{X^{\mathfrak{t}_n,\mathfrak{x}_n}_{\max\{s,\mathfrak{t}_n\}} - X^{\mathfrak{t}_0,\mathfrak{x}_0}_{\max\{s,\mathfrak{t}_0\}}} \geq \varepsilon \right)
	\right] = 0. 
	\end{equation} 
Next note that the assumption that it holds that 
	$\sup_{r\in (0,\infty)} \inf_{t\in [0,T]} \inf_{x\in\cO\setminus O_r} V(t,x) = \infty$ 
ensures that for every 
	$n\in\N$
there exists 
	$r\in (0,\infty)$ 
such that 
	$\inf_{t\in [0,T]} \inf_{x\in\cO \setminus O_r} V(t,x) > n$. 
This yields that for every 
	$n\in\N$ 
there exists 
	$r\in (0,\infty)$ 
such that 
	$\{ V \leq n \} \subseteq [0,T]\times O_r$. 
Hence, we obtain for every 
	$n\in\N$ 
that 
	$\{V \leq n\}$ is a bounded set. 
Combining this with the fact that 
	$V\colon [0,T]\times\cO \to (0,\infty)$ 
is continuous demonstrates that for every 
	$n\in\N$ 
it holds that 
	$\{ V \leq n \}$ 
is a compact set. 
Lang~\cite[Theorem II.3.7]{Lang1999FundamentalsOfDifferentialGeometry}
therefore ensures that there exist 
	$\varphi_n\in C^{\infty}_c([0,T]\times\cO,\R)$,  $n\in\N$, 
which satisfy for all 
	 $n\in\N$, 
	 $t\in [0,T]$, 
	 $x\in \cO$  
that
 	\begin{equation} \label{stochastic_continuity_lemma:varphi_n}
	 \mathbbm{1}_{\{V\leq n\}}(t,x)
	 \leq 
	 \varphi_n(t,x) 
	 \leq 
	 \mathbbm{1}_{\{V < n+1\}}(t,x)
	 .
	 \end{equation} 
Next let
	 $\mathfrak{m}_{n}\colon [0,T]\times\R^d \to \R^d$, $n\in\N$, 
and 
     $\mathfrak{s}_{n}\colon [0,T]\times\R^d \to \R^{d\times m}$, $n\in\N$,
satisfy for all 
	$n\in \N$,
	$t\in [0,T]$, 
	$x\in \R^d$ 
that 
	\begin{equation} 
	\mathfrak{m}_n(t,x) 
	= 
	\begin{cases}
	\varphi_n(t,x)\mu(t,x) & \colon x\in\cO \\
	0 & \colon x\in\R^d\setminus\cO
	\end{cases}
	\quad\text{and}\quad 
	\mathfrak{s}_n(t,x)
	= 
	\begin{cases}
	\varphi_n(t,x)\sigma(t,x) & \colon x\in\cO \\
	0 & \colon x\in\R^d\setminus\cO. 
	\end{cases}   
	\end{equation} 
This, \eqref{stochastic_continuity_lemma:locally_lipschitz}, and \eqref{stochastic_continuity_lemma:varphi_n} assure that 
	$\mathfrak{m}_n\colon [0,T]\times\R^d\to\R^d$, $n\in\N$, 
and 
	$\mathfrak{s}_n\colon [0,T]\times\R^d\to\R^{d\times m}$, $n\in\N$, 
are compactly supported continuous functions which satisfy that 
\begin{enumerate}[(A)]
	\item
	\label{stochastic_continuity_lemma:lipschitz_approximations_itemA}
	 for all 
	$n\in\N$ 
	it holds that 
	\begin{equation}
	\sup_{t\in [0,T]}
	\sup_{\substack{x,y\in \R^d\\x\neq y}}
	\left[ 
	\frac{
		\norm{ 
			\mathfrak{m}_n(t,x) 
			- 
			\mathfrak{m}_n(t,y) 
		} 
		+ 
		\HSnorm{ 
			\mathfrak{s}_n(t,x) 
			- 
			\mathfrak{s}_n(t,y) 
		}
	}
	{\norm{ x - y }}
	\right] 
	<
	\infty, 
	\end{equation}
	\item \label{stochastic_continuity_lemma:lipschitz_approximations_itemB}
	for all 
	$n\in\N$, 
	$t\in [0,T]$, 
	$x\in \cO$
	it holds that 
	\begin{equation}
	\left[ 
	\norm{ 
		\mathfrak{m}_n(t,x) 
		-
		\mu(t,x) } 
	+ 
	\HSnorm{ 
		\mathfrak{s}_n(t,x) 
		- 
		\sigma(t,x)
	}
	\right]
	\mathbbm{1}_{\{V\leq n\}}(t,x)
	= 
	0,  
	\end{equation}
	and
	\item 
	\label{stochastic_continuity_lemma:compactly_supported_approximations_itemC}
	for all 
	$n\in\N$, 
	$t\in [0,T]$, 
	$x\in\cO$
	it holds that 
	\begin{equation}
	\left[ 
	\norm{ 
		\mathfrak{m}_n(t,x) 
	}
	+
	\HSnorm{ 
		\mathfrak{s}_n(t,x) 
	}
	\right]
	\mathbbm{1}_{\{V\geq n+1\}}(t,x)
	= 
	0.  
	\end{equation}
\end{enumerate} 
Note that Karatzas \& Shreve~\cite[Theorem 5.2.9]{KaSh1991_BrownianMotionAndStochasticCalculus} (cf.~also Gy\"ongy \& Krylov~\cite[Corollary 2.6]{GyoengyKrylov1995_existenceStrong} and Liu \& R\"ockner~\cite[Theorem 3.1.1]{LiuRoeckner2015SPDEs}) and Item~\eqref{stochastic_continuity_lemma:lipschitz_approximations_itemA} yield that for every 
	$n\in\N$, 
	$t\in [0,T]$, 
	$x\in\cO$ 
there exists an $(\F_s)_{s\in [t,T]}$-adapted  stochastic process with continuous sample paths $\X^{n,t,x}=(\X^{n,t,x}_{s})_{s\in [t,T]}\colon [t,T] \times \Omega \to \R^d$ which satisfies that for all 
	$s\in [t,T]$ 
it holds $\P$-a.s.~that 
	\begin{equation} 
	\X^{n,t,x}_{s} 
	= 
	x 
	+ 
	\int_t^s 
	\mathfrak{m}_n\big( r, \X^{n,t,x}_{r}\big) \,dr 
	+
	\int_t^s 
	\mathfrak{s}_n\big( r, \X^{n,t,x}_{r} \big) \,dW_r . 
	\end{equation}
Moreover, note that Item~\eqref{stochastic_continuity_lemma:compactly_supported_approximations_itemC} ensures for all 
	$n\in\N$ 
that 
	$\operatorname{supp}(\mathfrak{m}_n)\cup\operatorname{supp}(\mathfrak{s}_n) \subseteq \{V \leq n+1\}$. 
The fact that for every 
	$n\in\N$ 
there exists 
	$r\in (0,\infty)$ 
such that 
	$\{ V \leq n \} \subseteq [0,T]\times O_r$ 
hence implies that for every
	$n\in\N$
there exists 
	$r\in (0,\infty)$ 
such that
	$
	\operatorname{supp}(\mathfrak{m}_n) \cup \operatorname{supp}(\mathfrak{s}_n) 
	\subseteq [0,T]\times O_r
	$. 
Furthermore, observe that Item~\eqref{sde_compactly_supported:item1} of \cref{sde_compactly_supported} ensures that for all 
	$n\in\N$, 
	$r\in (0,\infty)$, 
	$m\in\N\cap (r,\infty)$, 
	$t\in [0,T]$, 
	$x\in \cO\setminus\{y\in\cO\colon (\Exists z\in O_r\colon\norm{y-z}<\nicefrac{1}{m}) \}$
with 
	$ 
	\operatorname{supp}(\mathfrak{m}_n) \cup \operatorname{supp}(\mathfrak{s}_n) 
	\subseteq [0,T]\times O_r$
it holds that 
	$\P(\Forall s\in [t,T]\colon \X^{n,t,x}_s=x)=1$. 
Combining this with the fact that for all 
	$r\in (0,\infty)$ 
it holds that 
	$O_r = \cap_{m\in\N\cap(r,\infty)} \{y\in\R^d\colon(\Exists x\in O_r\colon\norm{x-y}<\nicefrac{1}{m})\}$	
implies that for all 
	$n\in\N$, 
	$r\in (0,\infty)$, 
	$t\in [0,T]$,
	$x\in \cO\setminus O_r$
with 
	$ 
	\operatorname{supp}(\mathfrak{m}_n) \cup \operatorname{supp}(\mathfrak{s}_n) 
	\subseteq [0,T]\times O_r$
it holds that 
	\begin{equation} \label{stochastic_continuity_lemma:outside_Or}
			\P(\Forall s\in [t,T]\colon \X^{n,t,x}_s=x)=1. 
	\end{equation}
Next we observe that Item~\eqref{sde_compactly_supported:item2}  of \cref{sde_compactly_supported} ensures that for all 
	$n\in\N$, 
	$r\in (0,\infty)$, 
	$m\in\N\cap (r,\infty)$, 
	$t\in [0,T]$,
	$x\in \{y\in\cO\colon (\Exists z\in O_r\colon\norm{y-z}<\nicefrac{1}{m})\}$
with 
	$
	\operatorname{supp}(\mathfrak{m}_n)\cup\operatorname{supp}(\mathfrak{s}_n) 
	\subseteq [0,T]\times O_r$
it holds that 
	$
	\P(\forall s\in[t,T]\colon (\Exists y\in O_r\colon\norm{\X^{n,t,x}_s - y }\leq \nicefrac{1}{m}))
	= 
	1$. 
This yields that for all 
	$n\in\N$, 
	$r\in (0,\infty)$, 
	$t\in [0,T]$,
	$x\in O_r$ 
with 
	$ 
	\operatorname{supp}(\mathfrak{m}_n) \cup \operatorname{supp}(\mathfrak{s}_n) 
	\subseteq [0,T]\times O_r$
it holds that 
	\begin{equation} \label{stochastic_continuity_lemma:in_Or}
			\P(\forall s\in [t,T]\colon \X^{n,t,x}_s\in O_r)=1. 
	\end{equation}
The fact that for every
	$n\in\N$
there exists 
	$r\in (0,\infty)$ 
such that
	$
	\operatorname{supp}(\mathfrak{m}_n) \cup \operatorname{supp}(\mathfrak{s}_n) 
	\subseteq [0,T]\times O_r
	$ 
and  
\eqref{stochastic_continuity_lemma:outside_Or} hence demonstrate that for every 
	$n\in\N$ 
there exists 
	$r\in (0,\infty)$ 
such that 
\begin{enumerate}[(I)]
	\item 
it holds for all 
	$t\in [0,T]$, 
	$x\in O_r$ 
that 
	$\P(\forall s\in [t,T]\colon \X^{n,t,x}_s\in O_r)=1$
and 
	\item 
it holds for all 
	$t\in [0,T]$, 
	$x\in \cO\setminus O_r$ 
that 
	$\P(\forall s\in [t,T]\colon \X^{n,t,x}_s=x)=1$. 
\end{enumerate}
Therefore, we obtain that for every 
	$n\in\N$, 
	$t\in [0,T]$, 
	$x\in \cO$ 
there exists an $(\F_s)_{s\in [t,T]}$-adapted stochastic process with continuous sample paths 
	$\mathfrak{X}^{n,t,x}=(\mathfrak{X}^{n,t,x}_s)_{s\in [t,T]}\colon [t,T]\times\Omega \to \cO$ 
which satisfies that for all 
	$s\in [t,T]$ 
it holds $\P$-a.s.~that 
	\begin{equation} \label{stochastic_continuity_lemma:approximation_sde}
	\mathfrak{X}^{n,t,x}_s 
	= 
	x 
	+ 
	\int_t^s \mathfrak{m}_n(r,\mathfrak{X}^{n,t,x}_r)\,dr 
	+ 
	\int_t^s \mathfrak{s}_n(r,\mathfrak{X}^{n,t,x}_r)\,dW_r.
	\end{equation} 
In the next step let 
	$\tau^{n,t,x}\colon \Omega\to [t,T]$, 
	$n\in \N$,
	$t\in [0,T]$, 
	$x\in \cO$,
satisfy for every 
	$n\in \N$,
	$t\in [0,T]$, 
	$x\in \cO$,
	$\omega\in\Omega$ 
that 
	$
	\tau^{n,t,x}(\omega) = 
	\inf(
	\{
	s\in [t,T]\colon \max\{V(s,\mathfrak{X}^{n,t,x}_{s}(\omega)),
	V(s,X^{t,x}_{s}(\omega))\} 
	> n
	\} \cup \{T\}
	)
	$. 
Note that for every 
	$n\in \N$,
	$t\in [0,T]$, 
	$x\in \cO$
it holds that $\tau^{n,t,x}\colon\Omega\to [t,T]$ is an  $(\mathbb{F}_s)_{s\in [t,T]}$-stopping time. Next observe that the fact that for every 
	$n\in\N$ 
it holds that  
	$\{V\leq n\}$
is a compact set,  Item~\eqref{stochastic_continuity_lemma:lipschitz_approximations_itemB}, and \cref{coinciding_until_stopping_times} (with 
	$T=T-t$, 
	$\cC=\{(s,y) \in [0,T-t]\times\cO\colon V(t+s,y) \leq n\}$, 
	$\mu_1=([0,T-t]\times\cO \ni (s,y) \mapsto \mu(t+s,y) \in \R^d)$, 
	$\mu_2=([0,T-t]\times\cO \ni (s,y) \mapsto \mathfrak{m}_n(t+s,y) \in \R^d)$, 
	$\sigma_1=([0,T-t]\times\cO \ni (s,y) \mapsto \sigma(t+s,y) \in \R^{d\times m})$, 
	$\sigma_2=([0,T-t]\times\cO \ni (s,y) \mapsto \mathfrak{s}_n(t+s,y) \in \R^{d\times m})$, 
	$\F=(\F_{t+s})_{s\in [0,T-t]}$, 
	$W=([0,T-t]\times\Omega\ni (s,\omega) \mapsto W_{t+s}(\omega)-W_t(\omega)\in\R^m)$, 
	$X^{(1)}=([0,T-t]\times\Omega\ni (s,\omega) \mapsto X^{t,x}_{t+s}(\omega) \in \cO)$, 
	$X^{(2)}=([0,T-t]\times\Omega\ni (s,\omega) \mapsto 
	\mathfrak{X}^{n,t,x}_{t+s}(\omega)\in\cO)$,
	$\tau=\tau^{n,t,x}-t$
for 
	$n\in\N$, 
	$t\in [0,T]$, 
	$x\in \cO$
in the notation of \cref{coinciding_until_stopping_times}) 
ensure for all 
	$n\in \N$, 
	$t\in [0,T]$,
	$x\in \cO$ 
that
	\begin{equation}
	\P\!
	\left(
	\Forall s\in [t,T]\colon
	\mathbbm{1}_{\{ s\leq\tau^{n,t,x} \}}
	\norm{
	\mathfrak{X}^{n,t,x}_{s} - X^{t,x}_s 
	}
	=
	0 
	\right) 
	= 1.
	\end{equation}
This, Markov's inequality, and \cref{integral_estimate_through_lyapunov_function} 
(with 
	$T=T-t$, 
	$\mu=([0,T-t]\times\cO\ni (s,y) \mapsto \mu(t+s,y)\in\R^d)$, 
	$\sigma=([0,T-t]\times\cO\ni (s,y) \mapsto \sigma(t+s,y)\in\R^{d\times m})$, 
	$V=([0,T-t]\times\cO \ni (s,y) \mapsto V(t+s,y) \in [0,\infty))$, 
	$\F=(\F_{t+s})_{s\in [0,T-t]}$, 
	$W=([0,T-t]\times\Omega \ni (s,\omega)\mapsto W_{t+s}(\omega)-W_t(\omega)\in\R^m)$, 
	$X=([0,T-t]\times\Omega \ni (s,\omega) \mapsto X^{t,x}_{t+s}(\omega) \in \cO)$, 
	$\tau=\tau^{k,t,x}-t$
for 
	$k\in\N$,
	$t\in [0,T]$, 
	$x\in \cO$ 
in the notation of \cref{integral_estimate_through_lyapunov_function}) imply for all 
	$\varepsilon\in (0,\infty)$, 
	$k\in\N$, 
	$t\in [0,T]$, 
	$x\in \cO$, 
	$s\in [t,T]$
that 
	\begin{equation} 
	\label{stochastic_continuity_lemma:discrepancy_between_mathfrak_X_and_X_processes}
	\begin{split}
		&
		\P\!\left( 
		  \norm{\mathfrak{X}^{k,t,x}_s-X^{t,x}_s} \geq \varepsilon 
		\right) 
			\\
		&
		\leq 
		\P\!\left( 
		  \tau^{k,t,x} < s
		\right)
		\leq
		\P( 
		V(\tau^{k,t,x}, X^{t,x}_{\tau^{k,t,x}} ) \geq k)
		\leq 
		 \frac{1}{k}\Exp{V(\tau^{k,t,x},X^{t,x}_{\tau^{k,t,x}})}
		 \leq 
		 \frac{1}{k}V(t,x). 
	\end{split}
	\end{equation}
Furthermore, observe that \cref{stability_for_sdes} ensures that there exist real numbers 
	$c_k \in [0,\infty)$, $k\in\N$, 
which satisfy that for every 		
	$k,n\in\N$, 
	$s\in [\mathfrak{t}_0,T]$  
it holds that 
\begin{equation} \label{stochastic_continuity_lemma:application_of_a_priori_estimates}
 \Exp{\norm{\mathfrak{X}^{k,\mathfrak{t}_n,\mathfrak{x}_n}_{\max\{s,\mathfrak{t}_n\}}-\mathfrak{X}^{k,\mathfrak{t}_0,\mathfrak{x}_0}_{\max\{s,\mathfrak{t}_n\}}}^2} 
 \leq c_k \left[ |\mathfrak{t}_n-\mathfrak{t}_0| + \norm{\mathfrak{x}_n-\mathfrak{x}_0}^2\right]\!.
\end{equation} 
Moreover, observe that \eqref{stochastic_continuity_lemma:approximation_sde} ensures that for all 
	$k,n\in\N$, 
	$s\in [\mathfrak{t}_0,T]$ 
it holds $\P$-a.s.~that 
	\begin{equation} 
	\begin{split}
	\mathfrak{X}^{k,\mathfrak{t}_0,\mathfrak{x}_0}_{\max\{s,\mathfrak{t}_n\}} 
	- 
	\mathfrak{X}^{k,\mathfrak{t}_0,\mathfrak{x}_0}_{\max\{s,\mathfrak{t}_0\}} 
	& = 
	\mathfrak{X}^{k,\mathfrak{t}_0,\mathfrak{x}_0}_{\max\{s,\mathfrak{t}_n\}} 
	- 
	\mathfrak{X}^{k,\mathfrak{t}_0,\mathfrak{x}_0}_{s} 
	\\
	& = 
	\int_{s}^{\max\{s,\mathfrak{t}_n\}} 
	\mathfrak{m}_k(r,\mathfrak{X}^{k,\mathfrak{t}_0,\mathfrak{x}_0}_r)\,dr 
	+ 
	\int_{s}^{\max\{s,\mathfrak{t}_n\}} 
	\mathfrak{s}_k(r,\mathfrak{X}^{k,\mathfrak{t}_0,\mathfrak{x}_0}_r)\,dW_r. 
	\end{split}
	\end{equation}
Minkowski's inequality, the fact that 
	$\mathfrak{m}_k\colon [0,T]\times\R^d \to \R^d$, $k\in\N$, 
and 
	$\mathfrak{s}_k\colon [0,T]\times\R^d \to \R^{d\times m}$, $k\in\N$, 
are compactly supported continuous functions, and It\^o's isometry hence imply that for all 
	$k,n\in\N$, 
	$s \in [\mathfrak{t}_{0},T]$ 
it holds that 
	\begin{equation} 
	\begin{split}
    & 
    \left(
    \Exp{\norm{\mathfrak{X}^{k,\mathfrak{t}_0,\mathfrak{x}_0}_{\max\{s,\mathfrak{t}_n\}} 
    - 
    \mathfrak{X}^{k,\mathfrak{t}_0,\mathfrak{x}_0}_{\max\{s,\mathfrak{t}_0\}}}^2}\right)^{\!\nicefrac12} 
	\\
	& \leq 
	\int_{s}^{\max\{s,\mathfrak{t}_n\}} 
	\left(\Exp{ \norm{\mathfrak{m}_k(r,\mathfrak{X}^{k,\mathfrak{t}_0,\mathfrak{x}_0}_r)}^2}\right)^{\!\nicefrac12}\,dr
	+ 
	\left(
	\int_{s}^{\max\{s,\mathfrak{t}_n\}} 
	\Exp{\HSnorm{\mathfrak{s}_k(r,\mathfrak{X}^{k,\mathfrak{t}_0,\mathfrak{x}_0}_r)}^2}\,dr\right)^{\!\nicefrac12}
	\\
	& \leq 
	\left|\max\{0,\mathfrak{t}_n-s\}\right|^{\nicefrac12}
	\left[ 
	\sqrt{T} 
	\left(
	\sup_{t\in [0,T]} 
	\sup_{x\in \cO} 
	\norm{\mathfrak{m}_k(t,x)} 
	\right)
	+ 
	\left(
	\sup_{t\in [0,T]} 
	\sup_{x\in \cO} 
	\HSNorm{\mathfrak{s}_k(t,x)}
	\right)
	\right]\!. 
	\end{split}
	\end{equation} 
This, the fact that for all 
	$a,b\in \R$ 
it holds that $(a+b)^2 \leq 2 a^2 + 2 b^2 $, 
and \eqref{stochastic_continuity_lemma:application_of_a_priori_estimates}
ensure that there exist real numbers 
	$\mathfrak{c}_k \in [0,\infty)$, $k\in\N$, 
which satisfy for every 
	$k,n\in\N$, 
	$s\in [\mathfrak{t}_0,T]$ 
that 
	\begin{equation} \label{stochastic_continuity_lemma:case1}
	\begin{split}
	\Exp{\norm{\mathfrak{X}^{k,\mathfrak{t}_n,\mathfrak{x}_n}_{\max\{s,\mathfrak{t}_n \}} - \mathfrak{X}^{k,\mathfrak{t}_0,\mathfrak{x}_0}_{\max\{s,\mathfrak{t}_0\}} }^2 }
	& 
	\leq 
	2 \Exp{\norm{\mathfrak{X}^{k,\mathfrak{t}_n,\mathfrak{x}_n}_{\max\{s,\mathfrak{t}_n\}}-\mathfrak{X}^{k,\mathfrak{t}_0,\mathfrak{x}_0}_{\max\{s,\mathfrak{t}_n\}}}^2} 
	+ 
	2 \Exp{\norm{\mathfrak{X}^{k,\mathfrak{t}_0,\mathfrak{x}_0}_{\max\{s,\mathfrak{t}_n\}} 
			- 
			\mathfrak{X}^{k,\mathfrak{t}_0,\mathfrak{x}_0}_{s}}^2}
	\\
	& 
	\leq 
	\mathfrak{c}_k 
	\left[ |\mathfrak{t}_n-\mathfrak{t}_0| + \norm{\mathfrak{x}_n-\mathfrak{x}_0}^2 + 
	\max\{0,\mathfrak{t}_n - s\} \right]\!.
	\end{split}
	\end{equation} 
In addition, observe that \eqref{stochastic_continuity_lemma:approximation_sde} ensures that for all 
	$k,n\in\N$, 
	$s\in [0,\mathfrak{t}_0]$ 
it holds $\P$-a.s.~that 
	\begin{equation} 
	\begin{split}
	\mathfrak{X}^{k,\mathfrak{t}_n,\mathfrak{x}_n}_{\max\{s,\mathfrak{t}_n\}} 
	- 
	\mathfrak{X}^{k,\mathfrak{t}_0,\mathfrak{x}_0}_{\max\{s,\mathfrak{t}_0\}} 
	& = 
	\mathfrak{X}^{k,\mathfrak{t}_n,\mathfrak{x}_n}_{\max\{s,\mathfrak{t}_n\}} 
	- 
	\mathfrak{x}_0 
	\\
	& = 
	\mathfrak{x}_n 
	- 
	\mathfrak{x}_0 
	+ 
	\int_{\mathfrak{t}_n}^{\max\{s,\mathfrak{t}_n\}}
	\mathfrak{m}_k(r,\mathfrak{X}_{r}^{k,\mathfrak{t_n},\mathfrak{x}_n})\,dr
	+ 	
	\int_{\mathfrak{t}_n}^{\max\{s,\mathfrak{t}_n\}}
	\mathfrak{s}_k(r,\mathfrak{X}_{r}^{k,\mathfrak{t_n},\mathfrak{x}_n})\,dW_r. 
	\end{split}
	\end{equation} 
Minkowski's inequality, the fact that 
$\mathfrak{m}_k\colon [0,T]\times\R^d \to \R^d$, $k\in\N$, 
and 
$\mathfrak{s}_k\colon [0,T]\times\R^d \to \R^{d\times m}$, $k\in\N$, 
are compactly supported continuous functions, and It\^o's isometry hence imply that for all 
	$k,n\in\N$, 
	$s \in [0,\mathfrak{t}_{0}]$ 
it holds that 
	\begin{equation} 
	\begin{split}
	& 
	\left(  \Exp{\norm{\mathfrak{X}^{k,\mathfrak{t}_n,\mathfrak{x}_n}_{\max\{s,\mathfrak{t}_n\}} 
			- 
			\mathfrak{X}^{k,\mathfrak{t}_0,\mathfrak{x}_0}_{\max\{s,\mathfrak{t}_0\}}}^2}\right)^{\!\nicefrac12} 
	\leq  
	\norm{
		\mathfrak{x}_n 
		- 
		\mathfrak{x}_0} 
	\\
	& + 
	\int_{\mathfrak{t}_n}^{\max\{s,\mathfrak{t}_n\}}
	\left(\Exp{	\norm{\mathfrak{m}_k(r,\mathfrak{X}_{r}^{k,\mathfrak{t_n},\mathfrak{x}_n})}^2}\right)^{\!\nicefrac12}
	\!\,dr
	+ 	
	\left( 
	\int_{\mathfrak{t}_n}^{\max\{s,\mathfrak{t}_n\}}
	\Exp{\HSnorm{\mathfrak{s}_k(r,\mathfrak{X}_{r}^{k,\mathfrak{t_n},\mathfrak{x}_n})}^2}\!\,dr\right)^{\!\nicefrac12}
	\\
	& \leq 
	\norm{\mathfrak{x}_n-\mathfrak{x}_0} 
	+ 
	|\!\max\{0,s-\mathfrak{t}_n\}|^{\nicefrac12}
	\left[ 
	\sqrt{T} 
	\left(\sup_{t\in[0,T]}\sup_{x\in\cO} \norm{\mathfrak{m}_k(t,x)} 
	\right) 
	+ 
	\left(\sup_{t\in[0,T]}\sup_{x\in\cO}
	\HSnorm{\mathfrak{s}_k(t,x)}\right)
	\right]\!.
	\end{split}
	\end{equation} 
This and \eqref{stochastic_continuity_lemma:case1} ensure that there exist real numbers 
$\mathfrak{c}_k \in [0,\infty)$, $k\in\N$, 
which satisfy for every 
	$k,n\in\N$, 
	$s\in [0,T]$ 
that 
	\begin{equation} 
	\begin{split}
	& 
	\Exp{\norm{\mathfrak{X}^{k,\mathfrak{t}_n,\mathfrak{x}_n}_{\max\{s,\mathfrak{t}_n \}} - \mathfrak{X}^{k,\mathfrak{t}_0,\mathfrak{x}_0}_{\max\{s,\mathfrak{t}_0\}} }^2 }
	\\
	& \leq 
	\mathfrak{c}_k 
	\left[ |\mathfrak{t}_n-\mathfrak{t}_0| + \norm{\mathfrak{x}_n-\mathfrak{x}_0}^2 + 
	\mathbbm{1}_{[0,\mathfrak{t}_0]}(s)\max\{0,s-\mathfrak{t}_n\} + 
	\mathbbm{1}_{[\mathfrak{t}_0,T]}(s)\max\{0,\mathfrak{t}_n - s\} \right]\!.
	\end{split}
	\end{equation}
Combining this with Markov's inequality and  \eqref{stochastic_continuity_lemma:discrepancy_between_mathfrak_X_and_X_processes} demonstrates that for all 
	$\varepsilon\in (0,\infty)$, 
	$s\in [0,T]$
it holds that 
	\begin{equation}
	\begin{split}
	&
	\limsup_{n\to\infty} 
	\left[ 
	\P\!\left(\Normm{X^{\mathfrak{t}_n,\mathfrak{x}_n}_{\max\{s,\mathfrak{t}_n\}}-X^{\mathfrak{t}_0,\mathfrak{x}_0}_{\max\{s,\mathfrak{t}_0 \}}}\geq \varepsilon\right)\right] 
	\\
	& 
	\leq 
	\inf_{k\in\N} 
	\Bigg(
	\limsup_{n\to\infty} 
	\Bigg[    
	\P\!\left(\Normm{X^{\mathfrak{t}_n,\mathfrak{x}_n}_{\max\{s,\mathfrak{t}_n\}}-\mathfrak{X}^{k,\mathfrak{t}_n,\mathfrak{x}_n}_{\max\{s,\mathfrak{t}_n\}}}\geq \frac{\varepsilon}{3}\right)
	\\
   	& \qquad \qquad \qquad + 
	\P\!\left(\Normm{\mathfrak{X}^{k,\mathfrak{t}_n,\mathfrak{x}_n}_{\max\{s,\mathfrak{t}_n\}}-\mathfrak{X}^{k,\mathfrak{t}_0,\mathfrak{x}_0}_{\max\{s,\mathfrak{t}_0 \}} } \geq \frac{\varepsilon}{3}\right)
    + 
    \P\!\left(\Normm{\mathfrak{X}^{k,\mathfrak{t}_0,\mathfrak{x}_0}_{\max\{s,\mathfrak{t}_0 \}}-X^{\mathfrak{t}_0,\mathfrak{x}_0}_{\max\{s,\mathfrak{t}_0 \}}}\geq \frac{\varepsilon}{3}\right)\!
    \Bigg] 
 	\Bigg)
	\\
	&
 	\leq 
	\inf_{k\in\N} 
	\Bigg(
	\limsup_{n\to\infty} 
	\Bigg[  
	\frac{V(\mathfrak{t}_n,\mathfrak{x}_n)}{k} 
	+
	\frac{9}{\varepsilon^2}\Exp{\norm{\mathfrak{X}^{k,\mathfrak{t}_n,\mathfrak{x}_n}_{\max\{s,\mathfrak{t}_n\}}-\mathfrak{X}^{k,\mathfrak{t}_0,\mathfrak{x}_0}_{\max\{s,\mathfrak{t}_0 \}}}^2}
	+
	\frac{V(\mathfrak{t}_0,\mathfrak{x}_0)}{k}  
	\Bigg] 
	\Bigg) 
	\\
	& 
	\leq 
	\inf_{k\in\N} 
	\Bigg(
	\limsup_{n\to\infty} 
	\Bigg[  
	\frac{V(\mathfrak{t}_n,\mathfrak{x}_n)}{k} 
 	+
 	\frac{9\mathfrak{c}_k }{\varepsilon^2}
 	\bigg( |\mathfrak{t}_n-\mathfrak{t}_0| + \norm{\mathfrak{x}_n-\mathfrak{x}_0}^2 
 	\\
 	& \qquad \qquad \qquad 
 	+ \mathbbm{1}_{[0,\mathfrak{t}_0]}(s)\max\{0,s-\mathfrak{t}_n\}  
	+	\mathbbm{1}_{[\mathfrak{t}_0,T]}(s)\max\{0,\mathfrak{t}_n - s\} \bigg)
 	+
 	\frac{V(\mathfrak{t}_0,\mathfrak{x}_0)}{k}  
 	\Bigg] 
 	\Bigg) 
 	\\
 	& 
 	= 
 	\inf_{k\in\N} 
 	\left( 
   	\frac{2V(\mathfrak{t}_0,\mathfrak{x}_0)}{k} 
  	\right)
 	= 0. 
\end{split}  
\end{equation} 
This demonstrates \eqref{stochastic_continuity_lemma:suffices_to_show}. 
The proof of \cref{stochastic_continuity_lemma} is thus completed. 
\end{proof}

The next result, \cref{existence_sde_setting} below, is the main result of this article. It is an application of \cref{abstract_existence}.  \cref{integral_estimate_through_lyapunov_function,stochastic_continuity_lemma} above ensure that the crucial hypotheses of \cref{abstract_existence} are satisfied in the setting of \cref{existence_sde_setting}.  

\begin{theorem}
	\label{existence_sde_setting}
Let 
	$d,m\in\N$, 
	$L,T\in (0,\infty)$, 
let 
	$\langle\cdot,\cdot\rangle\colon\R^d\times\R^d\to\R$ 
be the standard scalar product on $\R^d$, 
let 
	$\norm{\cdot}\colon\R^d\to [0,\infty)$ 
be the standard norm on $\R^d$, 
let 
	$\HSnorm{\cdot}\colon\R^{d\times m}\to [0,\infty)$ 
be the Frobenius norm on 
	$\R^{d\times m}$, 
let 
	$\cO \subseteq  \R^d$ be a non-empty open set,
for every 
	$r\in (0,\infty)$ 
let 
	$O_r \subseteq \cO$ 
satisfy  
	$
	O_r = \{x\in\cO\colon \norm{x}\leq r~\text{and}~\{y\in\R^d\colon 
	\norm{y-x} < \nicefrac{1}{r}\}
	\subseteq \cO\}
	$,
let 
	$\mu \in C([0,T]\times\cO,\R^d)$, 
	$\sigma\in C([0,T]\times \cO, 
	\R^{d\times m})$ 
satisfy for all 
	$r\in (0,\infty)$ 
that 
	\begin{equation}
	\sup
	\left(
	\left\{
	\frac{
		\norm{ \mu(t,x) - \mu(t,y) } 
		+ 
		\HSnorm{ \sigma(t,x) - \sigma(t,y) }}
	{
		\norm{ x - y }}
	\colon 
	t\in [0,T], 
	x,y\in O_r, 
	x\neq y
	\right\}
	\cup \{ 0 \} 
	\right)
	<  
	\infty, 
	\end{equation}
let 
	$f\in C([0,T]\times\cO\times\R,\R)$, 
	$g\in C(\cO,\R)$,  
	$V\in C^{1,2}([0,T]\times\cO,(0,\infty))$, 
assume for all 
$t\in [0,T]$, 
$x\in \cO$, 
$v,w\in\R$
that 
$
|f(t,x,v) - f(t,x,w)| 
\leq 
L | v - w | 
$
and 
\begin{equation}
(\tfrac{\partial V}{\partial t})(t,x) 
+ 
\tfrac12 
\operatorname{Trace}\!\left( \sigma(t,x)
[\sigma(t,x)]^{*}(\operatorname{Hess}_x V)(t,x)\right) 
+ 
\langle 
\mu(t,x),
(\nabla_x V)(t,x) 
\rangle
\leq 0,  
\end{equation}
assume that 
	$
	\sup_{r\in (0,\infty)}
	[ 
	\inf_{t\in [0,T]}
	\inf_{x\in\cO\setminus O_r}
	V(t,x)
	] 
	= 
	\infty
	$ 
and 
	$
	\inf_{r\in (0,\infty)} 
	[ 
	\sup_{t\in [0,T]}
	\sup_{x\in \cO\setminus O_r} 
	( 
	\frac{|f(t,x,0)|}{V(t,x)}+
	\frac{|g(x)|}{V(T,x)}
	) 
	] 
	= 
	0
	$,
let 
	$(\Omega,\mathcal{F},\P,
	(\mathbb{F}_t)_{t\in [0,T]})$ 
	be a filtered probability space which satisfies the 
	usual conditions, 
	let 
	$W\colon [0,T]\times\Omega\to\R^m$ be a standard 
	$(\mathbb{F}_t)_{t\in [0,T]}$-Brownian motion, 
	and for every 
	$t\in [0,T]$, 
	$x\in\cO$ 
let 
	$
	X^{t,x}=(X^{t,x}_s)_{s\in [t,T]} 
	\colon [t,T]\times\Omega\to\cO
	$
	be an 
	$(\mathbb{F}_s)_{s\in [t,T]}$-adapted stochastic process with continuous sample paths which satisfies that for all 
	$s\in [t,T]$ 
	it holds $\P$-a.s.~that 
	\begin{equation}
	X^{t,x}_s
	= 
	x 
	+
	\int_t^s 
	\mu( r, X^{t,x}_r ) 
	\,dr 
	+ 
	\int_t^s 
	\sigma( r, X^{t,x}_r ) \,dW_r.   
	\end{equation}
Then there exists a unique $u\in C([0,T]\times\cO,\R)$ such that 
	\begin{enumerate}[(i)]
		\item
		\label{existence_sde_setting:item1} it holds that 
		\begin{equation}
		\limsup_{r\to\infty} 
		\left[ 
		\sup_{t\in [0,T]}
		\sup_{x\in \cO\setminus O_r}
		\left( 
		\frac{|u(t,x)|}{V(t,x)}
		\right) 
		\right] 
		= 
		0
		\end{equation}
		and
		\item		\label{existence_sde_setting:item2}
		 it holds for all 
			$t\in [0,T]$, 
			$x\in\cO$ 
		that 
		\begin{equation}
		u(t,x) 
		= 
		\Exp{
			g(X^{t,x}_T) 
			+ 
			\int_t^T f\big( s, X^{t,x}_s, u(s,X^{t,x}_s)\big) \,ds}\!. 
		\end{equation}
	\end{enumerate}
\end{theorem}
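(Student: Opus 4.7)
The plan is to derive \cref{existence_sde_setting} as a direct application of \cref{abstract_existence}. To do so, I need to verify the three nontrivial hypotheses of \cref{abstract_existence} that are not stated verbatim as assumptions in \cref{existence_sde_setting}: (i) the joint $(\Borel([t,T])\otimes\cF)$/$\Borel(\cO)$-measurability of each $X^{t,x}$, (ii) the stochastic continuity of $(t,x)\mapsto X^{t,x}_{\max\{s,t\}}$ in probability, and (iii) the Lyapunov inequality $\Exp{V(s,X^{t,x}_{s})}\leq V(t,x)$ for all $t\in[0,T]$, $s\in[t,T]$, $x\in\cO$. The remaining hypotheses of \cref{abstract_existence} (continuity of $f,g,V$; Lipschitz continuity of $f$ in the last variable; the growth condition involving $f(\cdot,\cdot,0)$ and $g$; and the coercivity $\sup_{r\in(0,\infty)}[\inf_{t\in[0,T]}\inf_{x\in\cO\setminus O_r}V(t,x)]=\infty$) are assumed directly in \cref{existence_sde_setting}.

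For step (i), the measurability follows immediately from the fact that each $X^{t,x}$ is $(\F_s)_{s\in[t,T]}$-adapted with continuous sample paths, hence progressively measurable. For step (ii), the stochastic continuity hypothesis is exactly the conclusion of \cref{stochastic_continuity_lemma}, and its hypotheses (the local Lipschitz bound on $\mu,\sigma$, the Lyapunov supersolution inequality for $V$, and the coercivity of $V$) coincide with assumptions of \cref{existence_sde_setting}; applying that lemma discharges (ii) at once.

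For step (iii), I would reduce to \cref{integral_estimate_through_lyapunov_function} by a time shift. Fix $t\in[0,T]$, $s\in[t,T]$, $x\in\cO$ and define the shifted coefficients $\tilde\mu\in C([0,T-t]\times\cO,\R^d)$ and $\tilde\sigma\in C([0,T-t]\times\cO,\R^{d\times m})$ by $\tilde\mu(r,y)=\mu(t+r,y)$, $\tilde\sigma(r,y)=\sigma(t+r,y)$, the shifted Lyapunov function $\tilde V\in C^{1,2}([0,T-t]\times\cO,(0,\infty))$ by $\tilde V(r,y)=V(t+r,y)$, the shifted filtration $(\tilde\F_r)_{r\in[0,T-t]}=(\F_{t+r})_{r\in[0,T-t]}$, and the shifted Brownian motion $\tilde W_r=W_{t+r}-W_t$. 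Then $\tilde X_r:=X^{t,x}_{t+r}$ is $(\tilde\F_r)_{r\in[0,T-t]}$-adapted with continuous sample paths and satisfies $\tilde X_r=x+\int_0^r\tilde\mu(q,\tilde X_q)\,dq+\int_0^r\tilde\sigma(q,\tilde X_q)\,d\tilde W_q$ for $r\in[0,T-t]$, while $\tilde V$ inherits the Kolmogorov supersolution inequality on $[0,T-t]\times\cO$ from that of $V$ on $[0,T]\times\cO$. Applying \cref{integral_estimate_through_lyapunov_function} with the constant $(\tilde\F_r)_{r\in[0,T-t]}$-stopping time $\tau\equiv s-t$ yields
\begin{equation}
\Exp{V(s,X^{t,x}_{s})}=\Exp{\tilde V(s-t,\tilde X_{s-t})}\leq \tilde V(0,x)=V(t,x),
\end{equation}
which is exactly (iii).

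Having verified (i)--(iii), \cref{abstract_existence} applies and produces a unique $u\in C([0,T]\times\cO,\R)$ satisfying Items~\eqref{existence_sde_setting:item1} and \eqref{existence_sde_setting:item2}. The main obstacle is essentially bookkeeping: carefully setting up the time-shifted SDE and the shifted filtration so that \cref{integral_estimate_through_lyapunov_function} applies, and confirming that the hypotheses of \cref{stochastic_continuity_lemma} transfer cleanly; no new analytic ingredient is required beyond invoking the two auxiliary lemmas and \cref{abstract_existence}.
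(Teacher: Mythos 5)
Your proposal is correct and follows essentially the same route as the paper: verify the Lyapunov inequality $\Exp{V(s,X^{t,x}_s)}\leq V(t,x)$ by applying \cref{integral_estimate_through_lyapunov_function} after a time shift, invoke \cref{stochastic_continuity_lemma} for the stochastic continuity hypothesis, and then conclude via \cref{abstract_existence}. The only minor difference is that you explicitly note the progressive measurability of $X^{t,x}$, which the paper leaves implicit.
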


\begin{proof}[Proof of 
\cref{existence_sde_setting}] 
First, note that \cref{integral_estimate_through_lyapunov_function} 
(with 
	$T=T-t$,
	$\mu=([0,T-t]\times\cO \ni (s,y) \mapsto \mu(t+s,y)\in\R^d)$, 
	$\sigma = ([0,T-t]\times\cO \ni (s,y) \mapsto \sigma(t+s,y) \in \R^{d\times m})$, 
	$V=([0,T-t]\times\cO \ni (s,y) \mapsto V(t+s,y)\in [0,\infty))$, 
	$\F = (\F_{t+s})_{s\in [0,T-t]}$, 
	$W=([0,T-t]\times\Omega \ni (s,\omega)\mapsto 
	W_{t+s}(\omega)-W_t(\omega)\in \R^m)$, 
	$X=([0,T-t]\times\Omega \ni (s,\omega)\mapsto X^{t,x}_{t+s}(\omega)\in\cO)$	 
for 
	$t\in [0,T]$, 
	$x\in \cO$
in the notation of \cref{integral_estimate_through_lyapunov_function})
ensures that for all 
	$t\in [0,T]$, 
	$s\in [t,T]$, 
	$x\in \cO$ 
it holds that 
	\begin{equation}
	\label{existence_sde_setting:supermartingale_type_inequality}
	\Exp{V(s,X^{t,x}_s)} \leq V(t,x).
	\end{equation}
Next observe that \cref{stochastic_continuity_lemma} ensures that for all 
	$\varepsilon\in (0,\infty)$,
	$s\in [0,T]$ 
and all
	$(t_n,x_n)\in [0,T]\times\cO$, $n\in\N_0$, 
with 
	$\limsup_{n\to\infty} 
	[|t_n-t_0|+\norm{x_n-x_0}]
	= 
	0
	$
it holds that 
	$
	\limsup_{n\to\infty}
	[\P(\Norm{X^{t_n,x_n}_{\max\{s,t_n\}}-X^{t_0,x_0}_{\max\{s,t_0\} }}\geq \varepsilon)] = 0$. 
Combining this with \eqref{existence_sde_setting:supermartingale_type_inequality} and \cref{abstract_existence} demonstrates that there exists a unique $u\in C([0,T]\times\cO,\R)$ which satisfies that for all 
	$t\in [0,T]$, 
	$x\in \cO$ 
it holds that 
	$\limsup_{r\to\infty} [\sup_{s\in [0,T]}\sup_{y\in\cO\setminus O_r} (\frac{|u(s,y)|}{V(s,y)})] = 0$ 
and 
\begin{equation}
 u(t,x) = \Exp{g(X^{t,x}_T) + \int_t^T f\big(s,X^{t,x}_s, u(s,X^{t,x}_s) \big)\,ds}\!.
\end{equation}
This establishes Items~\eqref{existence_sde_setting:item1} and \eqref{existence_sde_setting:item2}. The proof of \cref{existence_sde_setting} is thus completed.
\end{proof}

\cref{moment_estimate_special_lyapunov} implies the following corollary of \cref{existence_sde_setting}  in the situation in which the drift and diffusion coefficients $\mu\colon [0,T]\times\cO\to\R^d$ and $\sigma\colon [0,T]\times\cO\to\R^{d\times m}$ depend only on the spatial variable $x\in\cO$ and are independent of the time variable $t\in [0,T]$. For the sake of simplicity we take the spatial domain $\cO$ to be $\R^d$ in \cref{existence_of_fixpoint_x_dependence_full_space_lyapunov} below.
 
\begin{cor}
\label{existence_of_fixpoint_x_dependence_full_space_lyapunov}
Let 	
 	$d,m\in\N$, 
 	$L,T\in (0,\infty)$,
 	$\rho\in\R$, 
let 
 	$\langle\cdot,\cdot\rangle\colon\R^d\times\R^d\to\R$ be the standard scalar product on $\R^d$, 
let 
 	$\norm{\cdot}\colon \R^d \to [0,\infty)$ 
 	be the standard norm on $\R^d$, 
let 
	$\mu \colon \R^d \to \R^d$ 
and 
	$\sigma \colon \R^d \to \R^{d\times m}$ 
be locally Lipschitz continuous, 	
let 
 	$f\in C([0,T]\times\R^d\times\R,\R)$, 
 	$g\in C(\R^d,\R)$, 
 	$V\in C^{2}(\R^d,(0,\infty))$,
assume for all 
 	$t\in [0,T]$, 
 	$x\in \R^d$, 
 	$v,w\in \R$ 
that 
 	$|f(t,x,v)-f(t,x,w)| \leq L|v-w|$ 
and 
 	\begin{equation} 
 	\tfrac12 \operatorname{Trace}\!\left(
 	\sigma(x)[\sigma(x)]^{*}(\operatorname{Hess} V)(x)
 	\right) 
 	+ 
 	\langle \mu(x), (\nabla V)(x) \rangle 
 	\leq \rho V(x), 
 	\end{equation} 
 assume that 
 $
 \sup_{r\in (0,\infty)} 
 [
 \inf_{x\in \R^d,\norm{x} > r} 
 V(x)
 ]
 = 
 \infty
 $
 and 	
 $
 \inf_{r\in (0,\infty)} 
 [ 
 \sup_{t\in [0,T]}
 \sup_{x\in \R^d, \norm{x} > r}
 ( 
 \frac{|f(t,x,0)|+|g(x)|}{V(x)} 
 )
 ]
 = 0, 
 $	
 let 
 	$(\Omega,\cF,\P,(\F_t)_{t\in [0,T]})$ be a filtered probability space which satisfies the usual conditions, 
 let 
 	$W\colon [0,T]\times\Omega\to\R^m$ be a standard $(\F_t)_{t\in [0,T]}$-Brownian motion, 
 and for every 
 	$t\in [0,T]$,
 	$x\in \R^d$ 
 let 
 	$X^{t,x} = (X^{t,x}_s)_{s\in [t,T]}\colon [t,T]\times\Omega \to \R^d$ 
 be an $(\F_s)_{s\in [t,T]}$-adapted stochastic process with continuous sample paths which satisfies that for all 
	$s\in [t,T]$ 
 it holds $\P$-a.s.~that 
 \begin{equation} 
 X^{t,x}_s = x + \int_t^s \mu(X^{t,x}_r)\,dr + \int_t^s \sigma(X^{t,x}_r)\,dW_r. 
 \end{equation} 
 Then there exists a unique $u\in C([0,T]\times\R^d,\R)$ such that 
 \begin{enumerate}[(i)]
 	\item
 	\label{existence_of_fixpoint_x_dependence_full_space_lyapunov:item1} it holds that 
 	\begin{equation} 
 	\limsup_{r\to\infty} 
 	\left[
 	\sup_{t\in [0,T]}
 	\sup_{\substack{x\in \R^d\\ \norm{x}>r}} 
	\left( \frac{|u(t,x)|}{V(x)} \right)
 	\right]
 	= 0
 	\end{equation}
 	and  	
 	\item
 	\label{existence_of_fixpoint_x_dependence_full_space_lyapunov:item2} for all 
 	$t\in [0,T]$, 
 	$x\in \R^d$ 
 	it holds that 
 	\begin{equation} 
 	u(t,x) 
 	= 
 	\Exp{g(X^{t,x}_{T}) + \int_t^T f\big(s,X^{t,x}_{s},u(s,X^{t,x}_{s})\big)\,ds}
 	\!.
 	\end{equation}  
 \end{enumerate}
\end{cor}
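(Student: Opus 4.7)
The plan is to obtain \cref{existence_of_fixpoint_x_dependence_full_space_lyapunov} as an essentially mechanical application of \cref{existence_sde_setting}, after upgrading the purely spatial Lyapunov function $V$ to a space-time Lyapunov function $\mathbb{V}$ of the kind required by the hypotheses of \cref{existence_sde_setting}. Concretely, I will set $\mathcal{O}=\R^d$, note that the choice of $\mathcal{O}=\R^d$ reduces the sets $O_r$ from the statement to $\{x\in\R^d\colon \norm{x}\leq r\}$ (the interior-annulus condition is vacuous in $\R^d$), and define $\mathbb{V}\colon [0,T]\times\R^d\to (0,\infty)$ by
\begin{equation}
\mathbb{V}(t,x)=e^{-\rho t}V(x).
\end{equation}
\cref{moment_estimate_special_lyapunov}, applied with this $V$, $\mathcal{O}=\R^d$, and the time-independent drift and diffusion (promoted to $[0,T]\times\R^d$), will immediately yield that $\mathbb{V}\in C^{1,2}([0,T]\times\R^d,(0,\infty))$ and that $\mathbb{V}$ satisfies the parabolic Lyapunov inequality required by \cref{existence_sde_setting}.

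Next I would verify the remaining hypotheses of \cref{existence_sde_setting} in this setting. The assumption that $\mu,\sigma$ are locally Lipschitz on $\R^d$ and time-independent implies, upon restriction to any $O_r$, the uniform-in-$t$ local Lipschitz bound demanded in \cref{existence_sde_setting}, since the supremum over $[0,T]$ collapses. The key observation for the growth conditions is that the factor $e^{-\rho t}$ is bounded above and below by positive constants on $[0,T]$, namely $\min\{1,e^{-\rho T}\}\leq e^{-\rho t}\leq \max\{1,e^{-\rho T}\}$, so that
\begin{equation}
\inf_{t\in[0,T]}\inf_{\norm{x}>r}\mathbb{V}(t,x)
\geq \min\{1,e^{-\rho T}\}\inf_{\norm{x}>r}V(x)
\qandq
\frac{1}{\mathbb{V}(t,x)}\leq \max\{1,e^{\rho T}\}\,\frac{1}{V(x)}.
\end{equation}
The hypotheses $\sup_{r\in(0,\infty)}[\inf_{\norm{x}>r}V(x)]=\infty$ and $\inf_{r\in(0,\infty)}[\sup_{t\in[0,T]}\sup_{\norm{x}>r}(\tfrac{|f(t,x,0)|+|g(x)|}{V(x)})]=0$ therefore transfer directly to the corresponding $\mathbb{V}$-formulated hypotheses of \cref{existence_sde_setting}.

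With all hypotheses of \cref{existence_sde_setting} verified (applied with $\mathcal{O}=\R^d$, drift $[0,T]\times\R^d\ni(t,x)\mapsto \mu(x)$, diffusion $[0,T]\times\R^d\ni(t,x)\mapsto\sigma(x)$, and Lyapunov function $\mathbb{V}$), I obtain a unique $u\in C([0,T]\times\R^d,\R)$ satisfying the fixed point equation and the vanishing-at-infinity property relative to $\mathbb{V}$. The same two-sided bound on $e^{-\rho t}$ used above shows that the condition $\limsup_{r\to\infty}[\sup_{t}\sup_{\norm{x}>r}(\tfrac{|u(t,x)|}{\mathbb{V}(t,x)})]=0$ is equivalent to $\limsup_{r\to\infty}[\sup_{t}\sup_{\norm{x}>r}(\tfrac{|u(t,x)|}{V(x)})]=0$, which is exactly Item~\eqref{existence_of_fixpoint_x_dependence_full_space_lyapunov:item1}; Item~\eqref{existence_of_fixpoint_x_dependence_full_space_lyapunov:item2} is the fixed point equation from \cref{existence_sde_setting}, and the uniqueness statement carries over because the admissible class is unchanged. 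There is no genuine obstacle here; the only point to keep in mind is the bookkeeping that the $\mathcal{O}=\R^d$ specialization makes $O_r$ coincide with the closed ball of radius $r$, and that the $t$-independence of $\mu,\sigma$ together with the boundedness of $[0,T]$ eliminates every factor depending on $\rho$ or $T$ from the growth comparisons.
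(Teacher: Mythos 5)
Your proposal is correct and follows essentially the same route as the paper's proof: define $\mathbb{V}(t,x)=e^{-\rho t}V(x)$, invoke \cref{moment_estimate_special_lyapunov} to obtain the parabolic Lyapunov inequality and $C^{1,2}$-regularity, use the two-sided boundedness of $e^{-\rho t}$ on $[0,T]$ to transfer the coercivity and normalized-decay hypotheses from $V$ to $\mathbb{V}$, and then apply \cref{existence_sde_setting} with $\cO=\R^d$ and time-promoted drift and diffusion. The only cosmetic difference is that you spell out the $\min\{1,e^{-\rho T}\}$ and $\max\{1,e^{\rho T}\}$ bounds explicitly, which the paper leaves implicit.
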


\begin{proof}[Proof of \cref{existence_of_fixpoint_x_dependence_full_space_lyapunov}] 
Throughout this proof let 
 	$\V\colon [0,T]\times\R^d \to (0,\infty)$ 
satisfy for all 
 	$t\in [0,T]$, 
 	$x\in \R^d$ 
that 
 	$\V(t,x) = e^{-\rho t} V(x)$. 
Observe that \cref{moment_estimate_special_lyapunov} (with 
	$\cO = \R^d$, 
	$\mu = ([0,T]\times\R^d \ni (t,x) \mapsto \mu(x) \in \R^d)$, 
	$\sigma = ([0,T]\times\R^d \ni (t,x) \mapsto \sigma(x) \in \R^{d\times m})$ 
in the notation of \cref{moment_estimate_special_lyapunov})
ensures that for all 
 	$t\in [0,T]$, 
 	$x\in \R^d$ 
it holds that 
 	$\V\in C^{1,2}([0,T]\times\R^d,(0,\infty))$ 
and 
 	\begin{equation}
 	(\tfrac{\partial\V}{\partial t})(t,x) 
 	+ 
 	\tfrac12\operatorname{Trace}\!\left( 
 	\sigma(x)[\sigma(x)]^{*}(\operatorname{Hess}_x\V)(t,x)
 	\right)
 	+ 
 	\langle \mu(x), (\nabla_x\V)(t,x) \rangle 
 	\leq 0 . 
 	\end{equation}
Next, observe that the hypothesis that
 	$\sup_{r\in (0,\infty)} [\inf_{x\in\R^d, \norm{x} > r} V(x)] = \infty$
implies that 
 	\begin{equation} \label{existence_of_fixed_point_x_dependence_full_space_lyapunov:boundary_growth}
 	\sup_{r\in (0,\infty)} \left[\inf_{t\in [0,T]} 
 	\inf_{x\in \R^d, \norm{x} > r} \V(t,x)\right] = \infty.
 	\end{equation} 
Furthermore, observe that the hypothesis that   
 	$\inf_{r\in (0,\infty)} [
 	\sup_{t\in [0,T]}
 	\sup_{x\in \R^d, \norm{x} > r} (\frac{|f(t,x,0)|+|g(x)|}{V(x)}) ] = 0$ 
demonstrates that 
 	\begin{equation} 
 	\inf_{r\in (0,\infty)} \left[\sup_{t\in [0,T]} \sup_{x\in \R^d, \norm{x} > r} \left(\frac{|f(t,x,0)|}{\V(t,x)}+\frac{|g(x)|}{\V(T,x)}\right) \right] = 0.
 	\end{equation} 
\cref{existence_sde_setting} (with $\cO=\R^d$, $\mu=([0,T]\times\R^d\ni(t,x)\mapsto \mu(x)\in\R^d)$, $\sigma=([0,T]\times\R^d\ni(t,x)\mapsto \sigma(x)\in\R^{d\times m})$, $V=\V$ in the notation of \cref{existence_sde_setting}) and \eqref{existence_of_fixed_point_x_dependence_full_space_lyapunov:boundary_growth} hence ensure that there exists a unique $u\in C([0,T]\times\R^d,\R)$ which satisfies that 
 \begin{enumerate}[(I)] 
  \item
  \label{existence_of_fixpoint_x_dependence_full_space_lyapunov:itemI} it holds that 
  \begin{equation}
   \limsup_{r\to\infty} \left[ 
    \sup_{t\in [0,T]} 
    \sup_{\substack{x\in \R^d, \\ \norm{x} > r}} 
    \left(
     \frac{|u(t,x)|}{\V(t,x)}
    \right)
   \right] = 0
  \end{equation}
  and   
  \item
  \label{existence_of_fixpoint_x_dependence_full_space_lyapunov:itemII} it holds for all 
  	$t\in [0,T]$, 
  	$x\in \R^d$ 
  that 
  \begin{equation} 
   u(t,x) = \Exp{g(X^{t,x}_T) + \int_t^T f\big(s,X^{t,x}_s,u(s,X^{t,x}_s)\big)\,ds}\!. 
  \end{equation} 
 \end{enumerate}
Next note that Item~\eqref{existence_of_fixpoint_x_dependence_full_space_lyapunov:itemI} implies that 
\begin{equation}
 \limsup_{r\to\infty} 
 \left[ 
  \sup_{t\in [0,T]} 
  \sup_{\substack{x\in \R^d \\ \norm{x} > r}} 
  \left(
   \frac{|u(t,x)|}{V(x)} 
  \right) 
 \right] 
 = 0. 
\end{equation} 
This establishes Item~\eqref{existence_of_fixpoint_x_dependence_full_space_lyapunov:item1}. Moreover, note that Item~\eqref{existence_of_fixpoint_x_dependence_full_space_lyapunov:itemII} establishes Item~\eqref{existence_of_fixpoint_x_dependence_full_space_lyapunov:item2}. The proof of \cref{existence_of_fixpoint_x_dependence_full_space_lyapunov} is thus completed. 	
\end{proof}

Finally, in \cref{existence_of_fixpoint_polynomial_growth} below, we specialize in the setting of \cref{existence_of_fixpoint_x_dependence_full_space_lyapunov} to the situation in which the drift and diffusion coefficients $\mu\colon\R^d\to\R^d$ and $\sigma\colon\R^d\to\R^{d\times m}$ satisfy a coercivity condition and the nonlinearity $f\colon [0,T]\times\R^d\times\R\to\R$ as well as the terminal condition $g\colon \R^d\to\R$ are at most polynomially growing with respect to the spatial variable $x\in\R^d$. Suitable choices for the Lyapunov-type function $V$ are provided by \cref{polynomials_lyapunov}. 

\begin{cor}[Existence and uniqueness of at most polynomially growing solutions of SFPEs]
\label{existence_of_fixpoint_polynomial_growth}
 Let 
 	$d,m\in\N$, 
 	$L,T\in (0,\infty)$, 
let 
 	$\langle\cdot,\cdot\rangle\colon \R^d\times\R^d\to\R$ 
be the standard scalar product on $\R^d$, 
let 
 	$\norm{\cdot}\colon \R^d \to [0,\infty)$ 
be the standard norm on $\R^d$, 
let 
 	$\HSnorm{\cdot}\colon \R^{d\times m}\to [0,\infty)$ be the Frobenius norm on $\R^{d\times m}$, 
let 
 	$\mu\colon\R^d\to\R^d$ and 
 	$\sigma\colon \R^d\to\R^{d\times m}$
be locally Lipschitz continuous,  
let 
 	$f\in C([0,T]\times\R^d\times\R,\R)$, 
 	$g\in C(\R^d,\R)$ 
be at most polynomially growing,  
assume for all 
	$t\in [0,T]$, 
	$x\in \R^d$, 
	$v,w\in \R$ 
that
	$\max\{\langle x,\mu(x) \rangle, \HSNorm{\sigma(x)}^2\} \leq L (1+\norm{x}^2)$
and  
	$|f(t,x,v)-f(t,x,w)| \leq L|v-w|$, 
let 
 	$(\Omega,\cF,\P,(\F_t)_{t\in [0,T]})$ be a filtered probability space which satisfies the usual conditions, 
let 
 	$W\colon [0,T]\times\Omega\to\R^m$ be a standard $(\F_t)_{t\in [0,T]}$-Brownian motion, 
and for every 
 	$t\in [0,T]$, 
 	$x\in \R^d$ 
let 
 	$X^{t,x} = (X^{t,x}_s)_{s\in [t,T]}\colon [t,T]\times\Omega \to \R^d$ 
be an $(\F_s)_{s\in [t,T]}$-adapted stochastic process with continuous sample paths which satisfies that for all 
 	$s\in [t,T]$ 
it holds $\P$-a.s.~that 
 \begin{equation} 
  X^{t,x}_s = x + \int_t^s \mu(X^{t,x}_r)\,dr + \int_t^s \sigma(X^{t,x}_r)\,dW_r. 
 \end{equation} 
Then there exists a unique $u\in C([0,T]\times\R^d,\R)$ such that 
\begin{enumerate}[(i)]
 	\item
 	\label{existence_of_fixpoint_polynomial_growth:item1} it holds that $u$ is at most polynomially growing and  	
 	\item
 	\label{existence_of_fixpoint_polynomial_growth:item2} it holds for all 
 		$t\in [0,T]$, 
 		$x\in \R^d$ 
 	that 
 	\begin{equation} 
 	 u(t,x) 
 	 = 
 	 \Exp{g(X^{t,x}_{T}) + \int_t^T f\big(s,X^{t,x}_{s},u(s,X^{t,x}_{s})\big)\,ds}
 	 \!.
 	\end{equation}  
\end{enumerate}
\end{cor}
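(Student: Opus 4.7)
The plan is to apply \cref{existence_of_fixpoint_x_dependence_full_space_lyapunov} with the Lyapunov-type function $V(x)=(1+\norm{x}^2)^{p/2}$ for a suitably large exponent $p\in(0,\infty)$ supplied by \cref{polynomials_lyapunov}, and then translate the decay statement $\limsup_{r\to\infty}\sup_{t\in[0,T]}\sup_{\norm{x}>r}|u(t,x)|/V(x)=0$ into at-most-polynomial growth of $u$.

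For existence, since $f(\cdot,\cdot,0)$ and $g$ are at most polynomially growing, there exist $q,C\in(0,\infty)$ with $|f(t,x,0)|+|g(x)|\leq C(1+\norm{x}^2)^{q/2}$ for all $(t,x)\in[0,T]\times\R^d$. I fix any $p\in(q,\infty)$ and set $V(x)=(1+\norm{x}^2)^{p/2}$. Then \cref{polynomials_lyapunov} (applied with $c=L$ and with the time-independent coefficients $\mu,\sigma$) gives $V\in C^{\infty}(\R^d,(0,\infty))$ and the elliptic Lyapunov inequality $\tfrac12\operatorname{Trace}(\sigma(x)[\sigma(x)]^{*}(\operatorname{Hess} V)(x))+\langle\mu(x),(\nabla V)(x)\rangle\leq\rho V(x)$ for $\rho=\tfrac{pL}{2}\max\{p+1,3\}$. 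The coercivity $\sup_{r\in(0,\infty)}\inf_{\norm{x}>r}V(x)=\infty$ is immediate from $V(x)\to\infty$ as $\norm{x}\to\infty$, and the choice $p>q$ ensures $\inf_{r\in(0,\infty)}\sup_{t\in[0,T]}\sup_{\norm{x}>r}(|f(t,x,0)|+|g(x)|)/V(x)=0$. All hypotheses of \cref{existence_of_fixpoint_x_dependence_full_space_lyapunov} are now met and I invoke it to obtain a continuous function $u\colon[0,T]\times\R^d\to\R$ satisfying the SFPE together with $\limsup_{r\to\infty}\sup_{t\in[0,T]}\sup_{\norm{x}>r}|u(t,x)|/V(x)=0$. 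This decay in particular yields that $|u(t,x)|\leq (1+C_0)(1+\norm{x}^2)^{p/2}$ for some $C_0\in(0,\infty)$, so $u$ is at most polynomially growing, which establishes Items~\eqref{existence_of_fixpoint_polynomial_growth:item1} and \eqref{existence_of_fixpoint_polynomial_growth:item2}.

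For uniqueness, suppose $u_1,u_2\in C([0,T]\times\R^d,\R)$ are at most polynomially growing and both satisfy the SFPE. I pick $p\in(0,\infty)$ strictly larger than each of the polynomial growth degrees of $u_1$, $u_2$, $f(\cdot,\cdot,0)$, and $g$, and again set $V(x)=(1+\norm{x}^2)^{p/2}$. Then both $u_1$ and $u_2$ satisfy the decay property $\limsup_{r\to\infty}\sup_{t\in[0,T]}\sup_{\norm{x}>r}|u_i(t,x)|/V(x)=0$, so the uniqueness clause of \cref{existence_of_fixpoint_x_dependence_full_space_lyapunov} (applied with this $V$) forces $u_1=u_2$.

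The argument is a direct specialization and no step poses a genuine obstacle. The only minor care required is to keep the existence and uniqueness parts separate, since they need slightly different choices of $p$: for existence $p$ only needs to dominate the growth exponents of $f(\cdot,\cdot,0)$ and $g$, whereas for uniqueness $p$ must additionally dominate the growth exponents of both candidate solutions $u_1,u_2$.
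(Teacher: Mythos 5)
Your proposal is correct and follows essentially the same route as the paper: apply \cref{polynomials_lyapunov} to obtain the elliptic Lyapunov inequality for $V(x)=(1+\norm{x}^2)^{p/2}$, then invoke \cref{existence_of_fixpoint_x_dependence_full_space_lyapunov}, choosing $p$ large enough for existence and enlarging $p$ further for uniqueness so that both candidate solutions fall in the decay class relative to $V$. The only cosmetic difference is bookkeeping of the exponents (the paper fixes one $p$ for $f,g$ and then uses $V_{2p}$ and $V_{\max\{2q,2p\}}$), which has no mathematical bearing.
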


\begin{proof}[Proof of \cref{existence_of_fixpoint_polynomial_growth}]  Throughout this proof let 
 	$\rho_q\in (0,\infty)$, 
 	$q\in (0,\infty)$, 
satisfy for every 
 	$q\in (0,\infty)$
that 
 	$\rho_q = \frac{qL}{2}\max\{q+1,3\}$, 
let 
	$p\in (0,\infty)$
satisfy that
	$\sup_{t\in [0,T]}
	\sup_{y\in \R^d} [ \frac{|f(t,y,0)| + |g(y)|}{1+\norm{y}^p} ]
	< \infty$, 
and let 
	$V_q\colon \R^d \to \R$, 
	$q\in (0,\infty)$, 
satisfy for all 
	$q\in (0,\infty)$,
	$x\in \R^d$ 
that 
 	$V_q(x) = [1+\norm{x}^2]^{\nicefrac{q}{2}}$. 
Note that the fact that 
 	$
 	\sup_{t\in [0,T]}\sup_{x\in\R^d} [\frac{|f(t,x,0)|+|g(x)|}{1+\norm{x}^p}]
 	< \infty 
 	$
implies that for all 
 	$q\in (p,\infty)$ 
it holds that 
 	\begin{equation}
 	\label{existence_of_fixpoint_polynomial_growth:growth_towards_boundary}
	\begin{split}
 	& 
	\limsup_{r \to \infty}  
 	\left[ 
 	\sup_{t\in [0,T]}
 	\sup_{\substack{x\in\R^d, \\ \norm{x} > r}}
 	\left(
 		\frac{|f(t,x,0)|+|g(x)|}{V_q(x)}
 	\right) 
 	\right] 
    \\
	& = 	
	\limsup_{r \to \infty} 
	\left[ 
	\sup_{t\in [0,T]} 
	\sup_{\substack{x\in\R^d, \\ \norm{x} > r}}
	\left( 
	\left[
		\frac{|f(t,x,0)|+|g(x)|}{1+\norm{x}^p} 
	\right]
	\left[ 
		\frac{1+\norm{x}^p}{V_q(x)} 
	\right]
	\right)
	\right] 
	\\
	& \leq 
	\left[ 
	\sup_{t\in [0,T]} 
	\sup_{x\in\R^d}
	\left(
	\frac{|f(t,x,0)|+|g(x)|}{1+\norm{x}^p}
	\right)
	\right]
	\left[ 
 	\limsup_{r\to\infty} 
	\left(
	\sup_{t\in [0,T]}
	\sup_{\substack{x\in\R^d, \\ \norm{x}> r}} 
	\left( 
	\frac{1+\norm{x}^p}{V_q(x)} 
	\right) 
	\right)
	\right]
	= 0. 
	\end{split}
 	\end{equation}
Moreover, observe that for all 
 	$q\in (p,\infty)$ 
it holds that 
 	\begin{equation}
 	\label{existence_of_fixpoint_polynomial_growth:V_blows_up_towards_boundary}
 	 \sup_{r\in (0,\infty)} 
 	 \left[ 
 	  \inf_{\substack{x\in\R^d, \\ \norm{x}>r}} 
 	  V_q(x)
 	 \right] 
 	 = \infty. 
 	\end{equation}
Next note that \cref{polynomials_lyapunov} ensures for all
 	$q\in (0,\infty)$, 
 	$x\in \R^d$ 
that 
 \begin{equation} 
  \tfrac12 \operatorname{Trace}\!\left( 
   \sigma(x)[\sigma(x)]^{*}(\operatorname{Hess} V_q)(x)
  \right)
  + 
  \langle \mu(x), (\nabla V_q)(x)\rangle 
  \leq \rho_q V_q(x).  
 \end{equation} 
Combining this with  \eqref{existence_of_fixpoint_polynomial_growth:growth_towards_boundary}, \eqref{existence_of_fixpoint_polynomial_growth:V_blows_up_towards_boundary}, and \cref{existence_of_fixpoint_x_dependence_full_space_lyapunov} (with $V=V_{2p}$ in the notation of \cref{existence_of_fixpoint_x_dependence_full_space_lyapunov}) yields that there exists a unique $u\in C([0,T]\times\R^d,\R)$ which satisfies for all 
 	$t\in [0,T]$, 
 	$x\in\R^d$
that 
	$
	\limsup_{r\to\infty} 
  	[
  	\sup_{s\in [0,T]}\sup_{y\in\R^d, \norm{y}>r}(\frac{|u(s,y)|}{V_{2p}(y)})
  	]
  	= 0
  	$
and
 	$
  	u(t,x) = \EXP{g(X^{t,x}_T)+\int_t^T f\big(s,X^{t,x}_s,u(s,X^{t,x}_s)\big)\,ds}. 
 	$
In particular, this ensures that $u\colon [0,T]\times\R^d\to\R$ is at most polynomially growing. This establishes that $u\in C([0,T]\times\R^d,\R)$ satisfies Items~\eqref{existence_of_fixpoint_polynomial_growth:item1} and \eqref{existence_of_fixpoint_polynomial_growth:item2}.  
It remains to prove that $u\colon [0,T]\times\R^d\to\R$ is the only continuous function which satisfies Items~\eqref{existence_of_fixpoint_polynomial_growth:item1} and \eqref{existence_of_fixpoint_polynomial_growth:item2}. For this, let 
 	$v\in C([0,T]\times\R^d,\R)$ 
be an at most polynomially growing function which satisfies for all 
 	$t\in [0,T]$, 
 	$x\in \R^d$ 
that 
 	$v(t,x)=\EXP{g(X^{t,x}_T)+\int_t^T f(s,X^{t,x}_s,v(s,X^{t,x}_s))\,ds}$. 
The fact that $v\colon [0,T]\times\R^d\to\R$ is at most polynomially growing ensures that there exists 
 	$q\in (0,\infty)$ 
which satisfies that 
 	$
 	\sup_{t\in [0,T]}\sup_{x\in\R^d} [\frac{|v(t,x)|}{1+\norm{x}^q}]
 	< 
 	\infty
 	$. 
This implies that $u,v\in C([0,T]\times\R^d,\R)$ satisfy for all 
 	$t\in [0,T]$, 
 	$x\in \R^d$ 
that 
	$
	\limsup_{r\to\infty} 
	[ 
	\sup_{s\in [0,T]}
	\sup_{y\in \R^d,\norm{y}>r}
	(
	\frac{|u(s,y)| + |v(s,y)|}{V_{\max\{2q,2p\}}(y)} 
	)
	] 
	= 
	0
	$, 
	$
	u(t,x) = \EXP{g(X^{t,x}_T)+\int_t^T f\big(s,X^{t,x}_s,u(s,X^{t,x}_s)\big)\,ds}
	$, 
and 
  \begin{equation} 
   v(t,x) = \Exp{g(X^{t,x}_T)+\int_t^T f\big(s,X^{t,x}_s,v(s,X^{t,x}_s)\big)\,ds}\!. 
  \end{equation} 
\cref{existence_of_fixpoint_x_dependence_full_space_lyapunov} (with $V=V_{\max\{2q,2p\}}$ in the notation of \cref{existence_of_fixpoint_x_dependence_full_space_lyapunov}) hence ensures that $u=v$. 
This establishes that $u\colon [0,T]\times\R^d\to\R$ is the unique continuous function which satisfies Items~\eqref{existence_of_fixpoint_polynomial_growth:item1} and \eqref{existence_of_fixpoint_polynomial_growth:item2}. The proof of \cref{existence_of_fixpoint_polynomial_growth} is thus completed.
\end{proof}

\bibliographystyle{acm}
\bibliography{PDE_approximation_bibfile}
\end{document}